\documentclass[11pt,reqno]{amsart}
\usepackage{amsmath,amssymb,amsthm,mathtools,mathrsfs}
\usepackage[T1]{fontenc}
\usepackage{lmodern,microtype}
\usepackage{needspace}
\microtypesetup{expansion=false}
\usepackage[colorlinks=true,linkcolor=black,citecolor=black,urlcolor=black]{hyperref}
\hypersetup{pdftitle={The Sharp Isoperimetric Inequality in the First Heisenberg Group},pdfauthor={Wei-Chun Chen, Wei-Ting Kao, and Chun-Kai Lien}}
\makeatletter
\def\@settitle{\begin{center}%
  \normalfont\Large\bfseries\baselineskip20pt\relax
  \@title
  \end{center}}
\def\@setauthors{%
  \begingroup
  \def\thanks{\protect\thanks@warning}%
  \trivlist
  \centering\small\scshape\@topsep24pt\relax
  \advance\@topsep by -\baselineskip
  \item\relax
  \author@andify\authors
  \def\\{\protect\linebreak}%
  \authors
  \endtrivlist
  \endgroup}
\def\section{\@startsection{section}{1}%
  \z@{1.1\linespacing\@plus.3\linespacing}{.6\linespacing}%
  {\normalfont\large\bfseries\raggedright}}
\def\subsection{\@startsection{subsection}{2}%
  \z@{.5\linespacing\@plus.7\linespacing}{-.5em}%
  {\normalfont\bfseries}}
\makeatother
\newtheoremstyle{paperplain}
  {9pt plus 2pt minus 1pt}{6pt plus 1pt minus 1pt}
  {\itshape}{}{\bfseries}{.}{.5em}
  {\thmname{#1}\thmnumber{ #2}\thmnote{\normalfont\ (#3)}}
\theoremstyle{paperplain}
\newtheorem{theorem}{Theorem}[section]
\newtheorem{proposition}[theorem]{Proposition}
\newtheorem{lemma}[theorem]{Lemma}
\theoremstyle{definition}\newtheorem*{AIuse}{AI Usage Statement}
\theoremstyle{remark}\newtheorem{remark}[theorem]{Remark}
\numberwithin{equation}{section}
\allowdisplaybreaks[2]
\AtBeginDocument{%
  \setlength{\abovedisplayskip}{9pt plus 2pt minus 2pt}%
  \setlength{\belowdisplayskip}{9pt plus 2pt minus 2pt}%
  \setlength{\abovedisplayshortskip}{4pt plus 2pt}%
  \setlength{\belowdisplayshortskip}{7pt plus 2pt minus 2pt}%
  \clubpenalty=10000
  \widowpenalty=10000
  \displaywidowpenalty=10000}
\AddToHook{env/theorem/before}{\Needspace{6\baselineskip}}
\AddToHook{env/proposition/before}{\Needspace{6\baselineskip}}
\AddToHook{env/lemma/before}{\Needspace{6\baselineskip}}
\newcommand{\HH}{\mathbb H^1}
\newcommand{\RR}{\mathbb R}
\newcommand{\Div}{\operatorname{div}}
\newcommand{\supp}{\operatorname{supp}}
\newcommand{\eps}{\varepsilon}
\newcommand{\nuphase}{\widehat{\nu}}
\newcommand{\angI}{\mathcal I}
\newcommand{\EE}{\mathcal E}
\newcommand{\NN}{\mathcal N}
\newcommand{\ip}[2]{\langle #1,#2\rangle}
\title[Isoperimetry in the first Heisenberg group]{The Sharp Isoperimetric Inequality\\in the First Heisenberg Group}
\author{Wei-Chun Chen}
\address{Department of Mathematics, National Taiwan University, Taipei 10617, Taiwan}
\email{b14201028@ntu.edu.tw}

\author{Wei-Ting Kao}
\address{Institute of Mathematics, Academia Sinica, Taipei 106319, Taiwan}
\email{wtkao@as.edu.tw}

\author{Chun-Kai Lien}
\address{Department of Mathematics, National Taiwan University, Taipei 10617, Taiwan}
\email{d13221002@ntu.edu.tw}
\date{}
\subjclass[2020]{Primary 53C17, 49Q20; Secondary 49Q15}
\keywords{Heisenberg group, isoperimetric inequality, horizontal perimeter, Pansu spheres}
\begin{document}
\begin{abstract}
We prove the sharp horizontal isoperimetric inequality in the first Heisenberg group for all sets of finite horizontal perimeter and positive finite volume, with equality precisely for left translates and dilates of the Pansu ball.
\end{abstract}
\maketitle

\section{Introduction}\label{sec:main}
The horizontal isoperimetric problem in the first Heisenberg group asks for the least horizontal perimeter among sets of prescribed Haar measure. Pansu proposed the rotationally symmetric spheres as the boundaries of its minimizers \cite[p.~172]{Pansu}. Isoperimetric regions exist and admit bounded open representatives \cite{LR}. Ritor\'e and Rosales classified isoperimetric regions with $C^2$ boundary as Pansu balls \cite[Theorem~7.2]{RitoreRosales}. We establish the unrestricted sharp inequality and its rigidity statement without assuming boundary regularity of an arbitrary horizontal minimizer.

For a horizontal minimizer whose boundary may be nonsmooth, the main difficulty is to obtain smooth approximations retaining the second-variation inequalities needed for rigidity. We obtain them by subtracting the isoperimetric profile from an elliptic perimeter and adding a vanishing penalty.

We identify $\HH$ with $\RR^3$ and use the group law
\[
 (x,y,t)\cdot(x',y',t')=(x+x',y+y',t+t'+yx'-xy').
\]
Its left-invariant horizontal orthonormal frame, contact form, and dilations are
\begin{gather*}
 X=\partial_x+y\partial_t,\qquad Y=\partial_y-x\partial_t,
 \qquad T=\partial_t,\qquad [X,Y]=-2T,\\
 \Theta=dt+x\,dy-y\,dx,\qquad
 \delta_s(x,y,t)=(sx,sy,s^2t).
\end{gather*}
Haar measure is $d\mathcal L^3=dx\,dy\,dt$. For a measurable set $E$, its horizontal perimeter is
\[
 P_H(E)=|D_H\mathbf1_E|(\HH),\qquad D_H\mathbf1_E=(D_X\mathbf1_E,D_Y\mathbf1_E).
\]
Here $\langle D_Z\mathbf1_E,\varphi\rangle=-\int_E Z\varphi\,d\mathcal L^3$ for $Z=X,Y$ and $\varphi\in C_c^\infty(\HH)$; $|D_H\mathbf1_E|$ denotes total variation, and $P_H(E)=+\infty$ if $D_H\mathbf1_E$ is not a finite vector-valued Radon measure. We identify sets that differ by a Haar-null set and use the finite-perimeter framework of \cite{AGM}.

In these coordinates the unit Pansu ball is
\begin{equation}\label{eq:unitmodel}
 \Omega=\{(z,t):|z|<1,\ |t|<g(|z|)\},\qquad
 g(r)=\tfrac12\bigl(\arccos r+r\sqrt{1-r^2}\bigr).
\end{equation}
\begin{theorem}[Sharp isoperimetry]\label{thm:main}
Every measurable set $E\subset\HH$ with $0<|E|<\infty$ and $P_H(E)<\infty$ satisfies
\begin{equation}\label{eq:sharp}
 P_H(E)^4\ge\frac{512\pi^2}{27}|E|^3.
\end{equation}
Equality holds if and only if there exist $\mathbf a\in\HH$ and $s>0$ such that
\[
 |E\triangle(\mathbf a\cdot\delta_s\Omega)|=0.
\]
\end{theorem}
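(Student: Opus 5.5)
Here is the plan. I reduce the inequality to identifying the minimizers of the isoperimetric profile, and identify them by approximating a (possibly nonsmooth) minimizer with smooth sets that satisfy almost the same stability inequalities.

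\textbf{Step 1: the constant.} Let
\[
I(v)=\inf\{P_H(E):|E|=v\}.
\]
Left translations preserve $P_H$ and volume. The dilation $\delta_s$ scales $P_H$ by $s^3$ and volume by $s^4$. Hence $I(v)=c\,v^{3/4}$, and the inequality \eqref{eq:sharp} amounts to showing $c^4=512\pi^2/27$ with $c$ attained only by Pansu balls. I compute $|\Omega|$ and $P_H(\Omega)$ directly from \eqref{eq:unitmodel}.

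For the volume,
\[
|\Omega|=\int_{|z|<1}2g(|z|)\,dz=4\pi\int_0^1 r\,g(r)\,dr .
\]
For the perimeter I use the formula $P_H=\int|\nabla_H|\,d\mathcal H^2$, with the boundary written as two graphs $t=\pm g$. On the upper graph $u(z)=g(|z|)$ the horizontal gradient is $|\nabla u + z^\perp|$, which is radial. This gives $P_H(\Omega)=2\int_{|z|<1}\sqrt{g'(r)^2+r^2}\,dz$. Using $g'(r)=-\sqrt{1-r^2}$, this becomes $P_H(\Omega)=4\pi\int_0^1 r\,dr=2\pi$. A short calculation should give $|\Omega|=3\pi^2/8$, and then $(2\pi)^4/(3\pi^2/8)^3=512\pi^2/27$ as required. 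This step is routine.

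\textbf{Step 2: existence and regularity of minimizers.} By \cite{LR}, for each volume there is an isoperimetric region with a bounded open representative. It suffices to show that every such minimizer is a Pansu ball up to a null set. Given that, \eqref{eq:sharp} holds with equality exactly for left translates and dilates of $\Omega$. Any other $E$ with $P_H(E)=I(|E|)$ is itself a minimizer, so the equality case follows from the same identification.

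\textbf{Step 3: smooth approximation.} Fix a minimizer $E_0$ of volume $v$. Following the strategy announced in the introduction, I consider the Riemannian (elliptic) regularisations
\[
P_\varepsilon(E)=\int_{\partial E}\sqrt{|\nu_H|^2+\varepsilon^2\nu_T^2},
\]
and minimise the functional
\[
F_\varepsilon(E)=P_\varepsilon(E)-I(|E|)+\eta_\varepsilon\,|E\triangle E_0|
\]
within a large fixed ball, where $\eta_\varepsilon\to0$ is a vanishing penalty. Subtracting the profile $I$ removes the volume constraint. Elliptic regularity for almost-minimisers of a smooth uniformly elliptic integrand then makes the minimisers $E_\varepsilon$ regular, at least $C^{2}$ away from a small singular set. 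As $\varepsilon\to0$ they converge to $E_0$ in $L^1$, with perimeters converging to $P_H(E_0)$.

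Each $E_\varepsilon$ satisfies first-variation and second-variation inequalities, up to errors controlled by $\eta_\varepsilon$ and by the concavity of $I$ (the function $v^{3/4}$ is concave, which gives the right sign). These inequalities are the $\varepsilon$-analogues of the conditions Ritor\'e--Rosales use: constant mean curvature and stability.

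\textbf{Step 4: rigidity, the main obstacle.} I must pass the stability inequality to the limit. The plan is to test it with the vertical translation field $T$ and the contact-symmetric test functions used in \cite{RitoreRosales}. This should yield integral identities on $E_0$ forcing the following structure:
\begin{itemize}
\item the characteristic curves are CMC horizontal geodesic arcs of curvature $H$;
\item the singular set is a pair of points or a segment;
\item the boundary is the union of the geodesic arcs joining the two poles, which is exactly a Pansu sphere.
\end{itemize}
The hard part is uniformity: the second-variation error terms must vanish as $\varepsilon\to0$ even though the curvature of $E_\varepsilon$ may blow up near the characteristic set. First I would try weighted test functions that vanish to second order near the characteristic set, and estimate the errors with the $\varepsilon$-monotonicity formula.
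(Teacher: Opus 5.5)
There is a genuine gap, and it is the step you flag yourself. Step~4 tries to establish the Ritor\'e--Rosales structure for $E_0$: characteristic curves that are CC geodesics of curvature $H$, a singular set of two points or a segment, and arcs joining the poles. These are pointwise ODE statements about a fixed $C^2$ surface, which is why Ritor\'e--Rosales assume $C^2$. They are not preserved under $L^1$ convergence, and $\partial E_0$ has no known regularity on which to state them.

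The tests you name give nothing on their own. Vertical translation is an isometry of every $g_\eps$, so $\mathcal B_{2+r}(N_T,w)=-\int w\,Tr\,d\sigma_{g_1}$ and $\int_\Sigma N_T\,d\sigma_{g_1}=0$. Testing with $N_T$ therefore yields only $0\le O(\delta)$. Nor is the profile term $-I''(|E|)(\int v)^2$ an error controlled by concavity. It tends to the exactly critical coefficient $4/(3P)$ (for $3P=8V$), and the argument must use that value.

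The missing idea is an $\eps$-uniform quantitative estimate on the approximants whose conclusion survives $L^1$ limits. The paper does this in four steps.
\begin{itemize}
\item At each noncharacteristic $p$ it reconstructs a centre $\mathbf a(p)$ with $p\in\partial(\mathbf a(p)\cdot\Omega)$, matching the horizontal normal and the inclination $u$. The energy $\EE=\int(|Zz_0|^2+e_u^2)\,d\sigma_H$ controls how $\mathbf a$ varies along characteristic curves.
\item It tests the full stability form with the dilation speed $w_F=\rho+uN_T$. Its Hessian is computed by subtracting the nearly vanishing $\mathcal B(N_T,w_K)$, where $w_K=2u\rho+(u^2+\rho^2)N_T$ satisfies $w_F^2-N_Tw_K=\rho^4$. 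The area identity $2\int w_F\,d\sigma_{g_1}=3P_H-\int e_u\,d\sigma_H$ then lets the critical volume term cancel the constant and linear terms. This gives $\EE+\NN_\eps\le CqP\to0$ (Proposition~\ref{prop:prescribedestimate}), with a residual bound uniform in $\delta/\eps$.
\item It pairs the cutoff zero mode $\chi_\eta(u)\cos u\,\varphi(\mathbf a)$ with contact variations whose Hamiltonians equal $xt$, $yt$, $t^2$ near the poles $\mathbf a^\pm$. Cauchy--Schwarz for the nonnegative full Hessian then forces the centres to concentrate in $L^2(\sigma_H)$.
\item A flux argument shows that $D_H\mathbf 1_{E_0}$ vanishes off a single sphere $\partial(\mathbf a_0\cdot\Omega)$.
\end{itemize}
Weights vanishing at the characteristic set are also the wrong tool. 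The decisive speeds do not vanish there: $w_F=\pi/2$ at the poles, and the pole values $\pm h(\mathbf a^\pm)$ of the contact speeds are what produce the variance identities. The paper instead handles that set by $C^1$ extension of weighted fields (Lemma~\ref{lem:globalflux}).

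There are also concrete errors in the earlier steps.
\begin{itemize}
\item In Step~1, $g'(r)=-r^2/\sqrt{1-r^2}$, not $-\sqrt{1-r^2}$. Hence $g'^2+r^2=r^2/(1-r^2)$ and $P_H(\Omega)=4\pi\int_0^1r^2(1-r^2)^{-1/2}\,dr=\pi^2$. Your value $2\pi$ would give $(2\pi)^4/(3\pi^2/8)^3=8192/(27\pi^2)\neq512\pi^2/27$, so the asserted check is false.
\item In Step~3, $\eta|E\triangle E_0|=\eta|E_0|+\eta\int_E(1-2\mathbf 1_{E_0})$, so the prescribed curvature jumps across $\partial E_0$. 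Minimizers are then in general only $C^{1,\alpha}$. There is also no second variation with a $C^1$-small potential, which the residual estimate needs through $|\nabla_\Sigma r|\le\delta$.
\item The paper's fix is the penalty $\int_{F\triangle E}|\psi_E|$, where $\psi_E$ is smooth, positive in $E$, negative outside, and zero exactly on $\partial E$. This is affine in $\mathbf 1_F$ with a smooth weight, yet it still controls $|F\triangle E|$ via \eqref{eq:pinL1}.
\item Your claim that $E_\eps\to E_0$ with $P_H(E_\eps)\to P_H(E_0)$ needs an argument. Comparing with $E_0$ itself fails, because $P_\eps(E_0)$ is not known to be finite. You must compare with smooth level sets $G_j$ of mollifications and choose $\eps_j$ so that $\eps_jP_1(G_j)$ is small relative to the penalty coefficient.
\end{itemize}
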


The homogeneous dimension four determines the powers in the inequality: $P_H(\delta_sE)=s^3P_H(E)$ and $|\delta_sE|=s^4|E|$. Direct graph integration for $\Omega$, using $g'(r)=-r^2/\sqrt{1-r^2}$, gives
\begin{equation}\label{eq:unitball}
 P_H(\Omega)=4\pi\int_0^1\frac{r^2}{\sqrt{1-r^2}}\,dr=\pi^2,
 \qquad |\Omega|=4\pi\int_0^1rg(r)\,dr=\frac{3\pi^2}{8}.
\end{equation}
Consequently $P_H(\Omega)^4/|\Omega|^3=512\pi^2/27$.

Normalize an arbitrary minimizer $E$ so that $3P=8V$, where $P=P_H(E)$ and $V=|E|$. The regularized perimeter $P_\eps$ supplies ellipticity, and the isoperimetric profile gives the least horizontal perimeter at each volume. Subtracting the profile from $P_\eps$ and adding a vanishing pinning penalty gives smooth stable approximations converging to $E$. The reconstructed centre at each noncharacteristic boundary point specifies a left translate of the unit Pansu sphere passing through that point. The estimate in Proposition~\ref{prop:prescribedestimate} makes the centre derivatives along characteristic arcs tend to zero and controls the transverse terms in the second variation.

Nonnegativity of the full Hessian of the penalized functional, tested with speeds depending on the centre and with contact variations, forces the reconstructed centres to concentrate at a single point. Horizontal divergence tests then identify $E$ as a Pansu ball. Section~\ref{sec:selection} constructs the approximations, and Section~\ref{sec:rigidity} proves this rigidity statement.

The centre reconstruction follows \cite{CentreRigidity}, whose quantitative estimate is proved for $n\ge2$ under volume-constrained stability. The smooth bump construction used for the penalized approximations in \cite[Sections~3.1--3.2]{HigherIsoperimetry} is applied here on both sides of the boundary to control the symmetric difference. Lemma~\ref{lem:residual} proves the prescribed-curvature remainder bound in $\HH$ uniformly in $\delta/\eps$. Proposition~\ref{prop:prescribedestimate} combines this bound with the nonnegative full Hessian and controls the approximations as their volume coefficient approaches the critical profile value, without a condition on the relative decay rates of the curvature error and $\eps$.

\Needspace{7\baselineskip}
\begin{AIuse}
The proof strategy in this paper is based primarily on the authors' earlier
work in \cite{CentreRigidity,HigherIsoperimetry}. ChatGPT 6.0 Astra assisted
in refining the authors' original proof strategy and in carrying out the
calculations. The authors independently reviewed and verified all
mathematical arguments and calculations in the manuscript, and take full
responsibility for its correctness and content.
\end{AIuse}

\section{Centre reconstruction and the second-variation estimate}\label{sec:prelim}
For $V>0$, nonsharp horizontal isoperimetry and existence \cite{LR}, together with dilation invariance, give
\begin{equation}\label{eq:profile}
 I(V):=\inf_{|F|=V}P_H(F)=I(1)V^{3/4}>0.
\end{equation}
For a minimizer of volume $V$ and perimeter $P$, the scaling laws $P\mapsto s^3P$ and $V\mapsto s^4V$ show that dilation by $s=3P/(8V)$ normalizes the new perimeter and volume by $3P=8V$.
The normalized minimizers will be identified as left translates of $\Omega$.

\begin{remark}[Coordinate normalization]
The normalization in \cite{LR} uses the vertical coordinate $t'=2t$: the map $\iota(z,t)=(z,2t)$ intertwines the group laws and sends $(X,Y)$ to the orthonormal frame
$X'=\partial_x+2y\partial_{t'}$,
$Y'=\partial_y-2x\partial_{t'}$.
Its Euclidean Jacobian is $2$, so $|\iota(E)|=2|E|$ and $P'(\iota(E))=2P_H(E)$, where $P'$ is the horizontal perimeter for $(X',Y')$. Thus the existence and representative results apply with the present normalization.
\end{remark}

For $\eps>0$, define the left-invariant Riemannian metrics
\[
 g_\eps=dx^2+dy^2+\eps^{-2}\Theta^2.
\]
Thus $(X,Y,\eps T)$ is $g_\eps$-orthonormal. Unless specified otherwise, Riemannian geometric quantities below are computed with $g_1$. Set $JX=Y$, $JY=-X$, $JT=0$. On a smooth oriented boundary $\Sigma=\partial F$ with outward $g_1$-unit normal $N$, let $A(W)=\nabla_WN$ for $W\in T\Sigma$ and write
\[
 N=\rho\nu+{N_T} T,\qquad \rho=\cos u,\qquad{N_T}=\sin u,
 \qquad -\pi/2\le u\le\pi/2.
\]
On the noncharacteristic set $\rho>0$, the tangent frame
$Z=J\nu$, $U=\rho T-{N_T}\nu$ is positively oriented. Write $d\sigma_{g_\eps}$ for the surface area measure induced by $g_\eps$. The horizontal perimeter measure is $d\sigma_H=\rho\,d\sigma_{g_1}$.
On the regular part of the unit Pansu sphere, $H=\Div_H\nu=2$ and $Zu=-1$. For a general surface, write
\begin{equation}\label{eq:fields}
 H=\Div_H\nu,\quad e_u=Zu+1,\quad \kappa=Uu,\quad e_H=H-2.
\end{equation}
Put $\angI=(-\pi/2,\pi/2)$.
Identify horizontal vectors $aX+bY$ with $a+ib$. Since $|\nu|=1$, write $\nu=\cos\theta+i\sin\theta=e^{i\theta}$ locally, where $\theta$ is the horizontal normal angle, and put $\nuphase=e^{2iu}\nu=e^{i(\theta+2u)}$.
With inward horizontal normal $-\nu$ and characteristic direction $Z$, the centre reconstruction in \cite[Section~2.3]{StableRigidity} and \cite{CentreRigidity} is the pair $(z_0,t_0)$ given by
\begin{equation}\label{eq:centre}
 z_0=z-\tfrac12(\nu+\nuphase),\qquad
 t_0=t+\ip{Jz_0}{z}-\tfrac12(u+{N_T}\rho).
\end{equation}
For $\mathbf a(p)=(z_0(p),t_0(p))$, equation~\eqref{eq:centre} gives
\begin{equation}\label{eq:incidence}
 \mathbf a(p)^{-1}\cdot p
 =\left(\rho e^{iu}\nu,\frac{u+\rho\sin u}{2}\right)
 \in\partial\Omega,
 \qquad p\in\partial(\mathbf a(p)\cdot\Omega).
\end{equation}
Indeed, the horizontal coordinate has modulus $\rho$, and
$(u+\rho\sin u)/2=\operatorname{sign}(u)g(\rho)$.
As $u\to\pm\pi/2$, the right-hand side tends to $(0,\pm\pi/4)$ with error $O(|u\mp\pi/2|)$, uniformly for $|\nu|=1$.

Centres reconstructed from surfaces in a common compact set remain in a common compact set.

Write $z_0=x_0+iy_0$. The horizontal component of the centre derivative in a tangent direction $\xi$ is $\xi z_0$, while its vertical component in the left-invariant frame at $\mathbf a$ is
\[
 \Theta_{\mathbf a}(\xi\mathbf a)
 =\xi t_0+x_0\xi y_0-y_0\xi x_0.
\]
The extra terms account for the vertical component introduced by left translation.

\begin{subequations}\label{eq:centreD}
Along a characteristic curve, $Z\theta=H$ is the turning rate of the horizontal normal, and $Zu=-1+e_u$ is the rate of change of its inclination. The errors $e_H=H-2$ and $e_u$ measure the departures from the corresponding rates on the unit sphere. Differentiating the centre gives
\begin{equation}\label{eq:centreDZ}
 \begin{aligned}
 Zz_0&=-J\bigl[e_H(z-z_0)+e_u\nuphase\bigr],\\
 \Theta_{\mathbf a}(Z\mathbf a)&=\rho^2e_H.
 \end{aligned}
\end{equation}
If $e_H=e_u=0$ along such a curve, both components vanish and its reconstructed centre is constant.

The perpendicular tangent direction $U$ measures variation across characteristic curves. Here $Uu=\kappa$, and the centre derivative is
\begin{equation}\label{eq:centreDU}
 \begin{aligned}
 Uz_0&=-J\bigl[\tfrac{e_u}{\rho}(z-z_0)+(\kappa-\rho)\nuphase\bigr],\\
 \Theta_{\mathbf a}(U\mathbf a)&=\rho e_u.
 \end{aligned}
\end{equation}
On the unit sphere $\kappa=\rho$, so $\kappa-\rho$ measures the departure of the transverse inclination rate from the model. These identities distinguish constancy of the centre along each characteristic curve from agreement of centres across different curves.

The horizontal formulas follow from \eqref{eq:centre} and
$(\nu+\nuphase)/2=\rho^2\nu+{N_T}\rho J\nu$.
For the vertical components, symmetry of the shape form gives
$U\theta=(e_u-1+2{N_T}^2)/\rho$.
\end{subequations}

Put $d=z-z_0=\rho e^{iu}\nu$. For either tangent field $\xi\in\{Z,U\}$, the identities $\tfrac12\xi(u+{N_T}\rho)=\rho^2\xi u$ and $\ip{Jd}{\xi d}=\rho^2(\xi\theta+\xi u)$ give
\begin{align*}
 \xi t_0&=\xi t+\ip{J\xi z_0}{z}+\ip{Jz_0}{\xi z}-\rho^2\xi u,\\
 \Theta_{\mathbf a}(\xi\mathbf a)
 &=\Theta(\xi)-2\ip{Jd}{\xi z}+\rho^2\xi\theta.
\end{align*}
Using $Zz=J\nu$, $Uz=-{N_T}\nu$, $\Theta(Z)=0$ and $\Theta(U)=\rho$, the two values are
\[
 -2\rho^2+\rho^2H=\rho^2e_H,\qquad
 \rho-2{N_T}^2\rho+\rho^2U\theta=\rho e_u.
\]

For a smooth bounded region $F$ and $\eps>0$, define
\begin{equation}\label{eq:ellipticperimeter}
 \begin{gathered}
 P_\eps(F)=\eps\operatorname{Area}_{g_\eps}(\Sigma)
          =\int_\Sigma\Phi_\eps\,d\sigma_{g_1},\\
 \Phi_\eps=(\rho^2+\eps^2{N_T}^2)^{1/2},\qquad m=\rho/\Phi_\eps.
 \end{gathered}
\end{equation}
The volume and surface area measures satisfy
\[
 dV_{g_\eps}=\eps^{-1}d\mathcal L^3,\qquad
 d\sigma_{g_\eps}=\eps^{-1}\Phi_\eps\,d\sigma_{g_1}.
\]
For a set $F$ of finite $g_1$-perimeter, write $\partial^*F$ for its reduced boundary, with its measure-theoretic outer $g_1$-unit normal. Define $P_\eps$ by the same convex normal integrand on $\partial^*F$.

The first variation of $P_\eps$ defines $H_\eps$: a $g_1$-normal speed $w$ has variation $\int_{\Sigma} H_\eps w\,d\sigma_{g_1}$.

The energy density and its integral are
\begin{gather}
 e=|Zz_0|^2+e_u^2
   =\rho^2(e_H+e_u)^2+(2-\rho^2)e_u^2,\notag\\
 \EE=\int_{\Sigma} e\,d\sigma_H,\qquad
 \NN_\eps=\eps^2\int_{\Sigma}\frac{\kappa^2}{\Phi_\eps^2}\,d\sigma_H.\label{eq:e}
\end{gather}
Thus $\EE$ controls $Zz_0$ and $Zu+1$; the differential formulas are given in \eqref{eq:centreD}. Completing the square gives
\begin{equation}\label{eq:coercivity}
 e\ge\frac{\rho^2}{4}e_H^2+\frac12e_u^2.
\end{equation}
Thus $\EE$ controls the $L^2(d\sigma_{g_1})$ norms of $e_H$ and $e_u$ on each set $\{\rho\ge\vartheta\}$ with fixed $\vartheta>0$.

Put $w_F=\rho+u{N_T}$. On the unit Pansu sphere centred at the identity, this is the normal speed of Heisenberg dilation \cite[Section~3.2]{StableRigidity}. Indeed, \eqref{eq:incidence} gives $z=\rho e^{iu}\nu$ and $2t=u+\rho N_T$, so the dilation field $D=x\partial_x+y\partial_y+2t\partial_t=xX+yY+2tT$ satisfies
\[
 \langle D,N\rangle_{g_1}
 =\rho^3+(u+\rho N_T)N_T
 =\rho+uN_T=w_F.
\]
On a general surface, the same function of $u$ has an integral that bounds total $g_1$-area, which controls the contact second variations in Section~\ref{sec:rigidity}.

\begin{lemma}[Area bounds and an integral identity]\label{lem:areacontrol}
On every smooth closed oriented surface $\Sigma$, put $I_F=\int_\Sigma w_F\,d\sigma_{g_1}$. Then
\begin{equation}\label{eq:areaflux}
\begin{gathered}
 0=\int_{\Sigma}\Div_\Sigma(u\rho Z)\,d\sigma_{g_1}
   =\int_{\Sigma}(\rho(e_u-1)+2u{N_T})\,d\sigma_{g_1},\\
 2\int_{\Sigma} w_F\,d\sigma_{g_1}=3P_H-\int_{\Sigma} e_u\,d\sigma_H.
\end{gathered}
\end{equation}
\begin{equation}\label{eq:areabound}
 \operatorname{Area}_{g_1}(\Sigma)\le I_F\le\tfrac12\bigl(3P_H+\sqrt{P_H\EE}\bigr).
\end{equation}
\end{lemma}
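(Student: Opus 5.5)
The plan is to derive the first identity in \eqref{eq:areaflux} from the divergence theorem on $\Sigma$, to get the second by pure algebra, and then to obtain \eqref{eq:areabound} from a pointwise lower bound on $w_F$ together with Cauchy--Schwarz against $\EE$.

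\emph{Divergence identity.} The vector field $\rho Z=J\nu\,\rho$ is $J$ applied to the horizontal projection $N-\ip{N}{T}_{g_1}T$ of the unit normal. It is therefore a smooth tangent field on all of $\Sigma$, and it vanishes on the characteristic set $\{\rho=0\}$. The key computation is
\[
 \Div_\Sigma(\rho Z)=2{N_T}.
\]
I would verify this in the $g_1$-orthonormal frame $(X,Y,T)$, using the Levi-Civita connection of $g_1$. The commutator $[X,Y]=-2T$ produces the factor $2$. The symmetric part of $\nabla(JN_H)$ drops out after tracing over $T\Sigma$, because $J$ is skew. The sign would be fixed by checking the identity on the unit Pansu sphere. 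Granting this formula, the Leibniz rule with $Zu=e_u-1$ gives, on $\{\rho>0\}$,
\[
 \Div_\Sigma(u\rho Z)=\rho(e_u-1)+2u{N_T}.
\]
The main obstacle is justifying the integration across the characteristic set, where $u$ may fail to be smooth. I would first excise a $\delta$-neighbourhood of $\{\rho=0\}$, on which $u\rho Z$ has size at most $(\pi/2)\rho$. Then I would let $\delta\to0$: the boundary flux tends to zero, since $\rho\to0$ there. For a generic closed surface the characteristic set consists of isolated points and curves, so this excision is harmless. Alternatively, $u\rho Z$ is Lipschitz, because $u\mapsto u\cos u$ is smooth in the normal. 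Either route makes the total integral vanish, which proves the first line of \eqref{eq:areaflux}.

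\emph{Second identity.} By the first identity, $\int 2u{N_T}\,d\sigma_{g_1}=\int\rho(1-e_u)\,d\sigma_{g_1}$. Hence
\[
 2\int_\Sigma w_F\,d\sigma_{g_1}
 =\int_\Sigma\bigl(3\rho-\rho e_u\bigr)\,d\sigma_{g_1}
 =3P_H-\int_\Sigma e_u\,d\sigma_H,
\]
using $d\sigma_H=\rho\,d\sigma_{g_1}$.

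\emph{Area bounds.} For the lower bound I would use the scalar inequality
\[
 w_F=\cos u+u\sin u\ge1\quad\text{on }[-\pi/2,\pi/2].
\]
The function is even, equals $1$ at $u=0$, and has derivative $u\cos u\ge0$ for $u\ge0$. Integrating gives $\operatorname{Area}_{g_1}(\Sigma)\le I_F$. For the upper bound, the definition \eqref{eq:e} gives $e\ge(2-\rho^2)e_u^2\ge e_u^2$, since $\rho\le1$. Hence $\int e_u^2\,d\sigma_H\le\EE$, and Cauchy--Schwarz in $L^2(d\sigma_H)$ gives $\bigl|\int e_u\,d\sigma_H\bigr|\le\sqrt{P_H\EE}$. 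Inserting this into the second identity yields
\[
 I_F\le\tfrac12\bigl(3P_H+\sqrt{P_H\EE}\bigr),
\]
which completes \eqref{eq:areabound}.
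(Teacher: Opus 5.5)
Your proposal is correct and follows the paper's proof step for step. $\Div_\Sigma(\rho Z)=2N_T$ together with $Zu=e_u-1$ gives the divergence identity, substitution gives the second identity, and $dw_F/du=u\rho$ with Cauchy--Schwarz and $e_u^2\le e$ gives \eqref{eq:areabound}.

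The one soft spot is the characteristic set. The paper writes $u\rho Z=s\tfrac\pi2W-s(\arcsin\rho)W$, with $W=JN$ smooth and $\rho=|W|$ Lipschitz, and so obtains a global $C^1$ field on every surface. Your stated reason for Lipschitz continuity does not work: $u\cos u$ is not smooth as a function of the normal, and $Z$ itself is not continuous at characteristic points. The word ``generic'' is also unnecessary. Your conclusion still holds, because $u$ is the latitude of $N$ on the unit sphere and hence a Lipschitz function on $\Sigma$, and the characteristic set is $\sigma_{g_1}$-null for every smooth surface.
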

\begin{proof}
The tangent field $W=JN=\rho Z$ is smooth globally and satisfies $|W|=\rho$. Near a characteristic point, the sign $s$ of ${N_T}$ is constant and
\[
 u\rho Z=uW=s\tfrac\pi2W-s(\arcsin\rho)W.
\]
Since $\rho=|W|$ is Lipschitz, the last term is $O_\Sigma(\rho^2)$ with derivative $O_\Sigma(\rho)$ away from the zero set. It therefore extends in $C^1$ with zero derivative at every characteristic point. Thus $u\rho Z$ is a global $C^1$ tangent field. On the noncharacteristic set,
\[
 \Div_\Sigma(\rho Z)=2{N_T},\qquad
 \Div_\Sigma(u\rho Z)=\rho(e_u-1)+2u{N_T}.
\]
The divergence theorem gives \eqref{eq:areaflux}. Since $dw_F/du=u\rho$, one has $w_F\ge w_F(0)=1$. Cauchy--Schwarz and $e_u^2\le e$ give \eqref{eq:areabound}.
\end{proof}

\begin{proposition}[Critical stability estimate]\label{prop:prescribedestimate}
Let $\mathcal U$ be open and let $F\Subset\mathcal U$ be a smooth region with $P_H(F)>0$. For $0<\eps\le1$, suppose $F$ is stationary for
\[
 P_\eps(F)-\int_F(2+r)\,d\mathcal L^3.
\]
Let $\mathcal B_{2+r}$ denote the second variation of this functional. Suppose that, for some $c\ge0$,
\begin{equation}\label{eq:estimateform}
 \mathcal B_{2+r}(v,v)
 +c\left(\int_{\partial F}v\,d\sigma_{g_1}\right)^2\ge0
\end{equation}
for every smooth normal speed $v$. Put
\[
 \delta=\|r\|_{C^1(\mathcal U)},\qquad
 q=\eps^2+\delta+\left(cP_H(F)-\frac43\right)_+,
\]
where $s_+=\max\{s,0\}$ and derivatives are measured using $g_1$. There are absolute constants $C,q_0>0$ such that
\begin{equation}\label{eq:robustCE}
 \EE+\NN_\eps\le CqP_H(F)\qquad\hbox{if }q\le q_0.
\end{equation}
\end{proposition}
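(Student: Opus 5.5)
The plan is to test the stability inequality \eqref{eq:estimateform} with a small, explicit family of normal speeds modelled on the Jacobi fields of the unit Pansu sphere, in the spirit of \cite{CentreRigidity}. On the model, these Jacobi fields are the left-translation speeds $\langle X,N\rangle,\langle Y,N\rangle,\langle T,N\rangle$ and the dilation speed $w_F=\rho+u{N_T}$. Left translations are exact symmetries of both $P_\eps$ and the volume. Hence the speeds $v_X=\langle X,N\rangle$, $v_Y=\langle Y,N\rangle$ and $v_T=\langle T,N\rangle$ satisfy a Jacobi-type equation $L_\eps v=O(\delta)$, where $L_\eps$ is the Jacobi operator of $P_\eps-\int(2+r)$; the error comes only from the $C^1$ norm of $r$. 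These speeds have zero mean, so they give no information on their own.

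The informative test is a dilation-type speed. I would take $v=w_F$, or more precisely the $g_1$-normal component of the dilation field $D$ recentred at the reconstructed centre. Since dilation scales $P_\eps$ only up to $O(\eps^2)$ terms and scales volume exactly, $\mathcal B_{2+r}(w_F,w_F)$ can be computed by writing $L_\eps w_F$ as a model term plus remainders. The remainders are quadratic in $e_H$, $e_u$ and $\kappa-\rho$, together with $\eps^2\kappa^2/\Phi_\eps^2$ terms, which produce exactly the density of $\NN_\eps$. The key identity I aim for is
\[
 \mathcal B_{2+r}(w_F,w_F)\le -\tfrac{4}{3}\cdot\tfrac{1}{P_H}\Bigl(\int w_F\Bigr)^2 - c_0(\EE+\NN_\eps) + C(\eps^2+\delta)P_H.
\]
To get the first term, use Lemma~\ref{lem:areacontrol}. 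The identity $2\int w_F=3P_H-\int e_u\,d\sigma_H$ turns the mean term into $\tfrac94P_H^2$ plus an error bounded by $P_H^{1/2}\EE^{1/2}$. The factor $\tfrac43$ then exactly cancels the $c(\int v)^2$ term up to $(cP_H-\tfrac43)_+$. Substituting into \eqref{eq:estimateform} gives $c_0(\EE+\NN_\eps)\le CqP_H+CP_H^{1/2}\EE^{1/2}\cdot(\text{small})$, and absorption finishes the proof once $q\le q_0$.

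The remainder lemma (Lemma~\ref{lem:residual}, stated to hold uniformly in $\delta/\eps$) is what I would use to bound the perturbative terms. Stationarity gives $H_\eps=2+r$. Writing $H_\eps=mH+\eps^2(\cdots)\kappa/\Phi_\eps^{\,3}$, one gets $e_H=O(\delta)+O(\eps^2|\kappa|/\Phi_\eps^2)+(1-m)\cdot 2$, and $1-m=O(\eps^2{N_T}^2/\rho^2)$.

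Near the characteristic set, where $\rho\to0$, test functions must be cut off or used in the $C^1$-extended form of Lemma~\ref{lem:areacontrol}. That lemma shows $u\rho Z$ is a global $C^1$ field, so integrations by parts produce no boundary terms. The $g_1$-area bound \eqref{eq:areabound} controls all terms weighted by $d\sigma_{g_1}$ rather than $d\sigma_H$, in particular the $O(\eps^2+\delta)$ errors integrated over $\Sigma$.

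I expect the main obstacle to be the coercivity step. One must show that the second variation in the dilation direction produces $-c_0\EE$ with the full density $e=|Zz_0|^2+e_u^2$, and not merely a weighted fragment of it. The plan is to complete the square in the integrand of $\mathcal B(w_F,w_F)$ using the centre derivatives \eqref{eq:centreDZ}. After integrating by parts along $Z$ and using the divergence identity \eqref{eq:areaflux}, the terms involving $Zw_F$ should reduce to $-|Zz_0|^2-e_u^2$ plus exact divergences, with the quadratic form \eqref{eq:coercivity} ensuring positivity. If the direct speed $w_F$ fails, I would instead use a combination $w_F+\sum a_i v_i$ with the translation speeds, with coefficients chosen from moments of $z_0$, to absorb cross terms.
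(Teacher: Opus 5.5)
You have the paper's outer frame right: test \eqref{eq:estimateform} with $v=w_F$, use $2I_F=P_H(3-\bar e_u)$ so the critical coefficient $4/(3P_H)$ cancels the constant and linear terms, bound the rest by Lemma~\ref{lem:residual}, and absorb. \textbf{There is a genuine gap, however.} The step carrying all the content is only asserted: an exact evaluation of $\mathcal B_{2+r}(w_F,w_F)$ as $-\EE-\int e_u^2\,d\sigma_H+2\int e_u\,d\sigma_H-3P_H$, plus a remainder of size $-\NN_\eps+O(q)(P_H+\EE)$. The mechanisms you offer do not deliver it.

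On a general surface $w_F$ is not the normal speed of any ambient flow. A dilation ``recentred at the reconstructed centre'' has a point-dependent centre, so no symmetry computes its Hessian. Substituting $w_F$ directly into \eqref{eq:hessian} produces the potential terms $-\Phi_\eps^{-1}q_Nw_F^2$ and $-\eps^2\Phi_\eps^{-3}((e_u-1)^2+\kappa^2)w_F^2$, and $w_F\to\pi/2$ at the characteristic set. Written against $d\sigma_H$, these carry weights like $\rho^{-1}\Phi_\eps^{-1}$. So neither their main part nor their $e_u$-error part is controlled by $P_H,\EE,\NN_\eps$ to the $O(q)$ accuracy needed, and completing squares along $Z$ cannot remove them. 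They cancel only through the Euler--Lagrange equation differentiated along $T$; you use $H_\eps=2+r$ only at zeroth order.

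Your anticipated remainders quadratic in $\kappa-\rho$ would also be fatal: $\EE+\NN_\eps$ controls $\kappa$ only with weight $\eps^2/\Phi_\eps^2$. The fallback of adding translation speeds to $w_F$ changes neither $\int v\,d\sigma_{g_1}$ nor the quadratic form beyond $O(\delta)$ cross terms, so it cannot replace the computation. Also, $\langle X,N\rangle$ and $\langle Y,N\rangle$ are not translation speeds: the left-invariant fields $X,Y$ generate right translations, which do not preserve $P_\eps$. Only $T$, being central, works.

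The missing idea is to use $N_T$ not as a test speed but as an exact Jacobi field, $\mathcal B_{2+r}(N_T,w)=-\int_\Sigma w\,Tr\,d\sigma_{g_1}$. Set $w_K=2u\rho+(u^2+\rho^2)N_T$. Then $w_F^2-N_Tw_K=\rho^4$ and $(\partial_uw_F)^2-\partial_uN_T\,\partial_uw_K=-3\rho^4$. All three speeds are functions of $u$ alone, so the gradient terms in $\mathcal B(w_F,w_F)-\mathcal B(N_T,w_K)$ contract. This gives $\mathcal B_{2+r}(w_F,w_F)=[\mathcal B_\eps]_*-\mathcal A_r$, where $\mathcal A_r=\int_\Sigma[\rho^4Nr+w_KTr]\,d\sigma_{g_1}=O(\delta I_F)$. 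Here $[\mathcal B_\eps]_*$ is the zeroth-order expression \eqref{eq:contract}, whose weights $\rho^4/\Phi_\eps$ and $\eps^2\rho^4/\Phi_\eps^3$ are harmless at the characteristic set.

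Next, the Codazzi-type identity for $Ze_u$ and the divergence formula for $N_T\rho^3U$ are integrated using the $C^1$ extensions of Lemma~\ref{lem:globalflux}. This converts $-\int\rho^3(q_N+3(e_u-1)^2)\,d\sigma_{g_1}$ into \eqref{eq:integrated}. That is where the full density $e$, the constant $-3P_H$ and the linear term $2\int e_u\,d\sigma_H$ come from. It is also where the specific density $R_\eps$ is defined. Lemma~\ref{lem:residual} bounds exactly that $R_\eps$, including the sign of $-\NN_\eps$ and the integration by parts that keeps the $r\kappa$ term uniform in $\delta/\eps$. You therefore cannot invoke it for the remainder of a different expansion.
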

The critical coefficient is the one supplied by the isoperimetric profile: for a normalized minimizer with $3P=8V$ and $P=I(V)$, equation~\eqref{eq:profile} gives
\[
 -I''(V)=\frac{3P}{16V^2}=\frac4{3P}.
\]
Subtracting $I(|F|)$ therefore provides the limiting volume term needed in \eqref{eq:estimateform}. For $q\le1$ and $P=P_H(F)$, the test speed $w_F$ combines the area identity \eqref{eq:areaflux} with the critical coefficient $4/(3P)$. The cancellation identity \eqref{eq:estimatecancellation} and the residual bound \eqref{eq:Rbound} give
\[
 \begin{aligned}
 0&\le\mathcal B_{2+r}(w_F,w_F)+cI_F^2\\
  &\le-\EE-\NN_\eps+C_*q(P+\EE),
 \end{aligned}
\]
where $C_*$ is absolute. The proof in Section~\ref{sec:quantitativeestimate} derives this inequality and absorbs the $q\EE$ term to obtain \eqref{eq:robustCE}. Lemma~\ref{lem:killinghessian} supplies the second-variation formula. Section~\ref{sec:selection} constructs approximations for which $q\to0$.

\subsection{Second variation and admissible normal speeds}\label{sec:secondvariation}

Near a characteristic point, $\Sigma$ is a vertical graph $t=g(x,y)$ with characteristic set $G^{-1}(0)$, where $G=(g_x-y,g_y+x)$. The rank estimate in \cite[proof of Theorem~D, (6.9)--(6.10)]{CHY} gives $\operatorname{rank}DG\ge1$. The implicit function theorem therefore places the characteristic set locally in a smooth curve, so $\sigma_{g_1}\{\rho=0\}=0$.

\begin{lemma}[Second variation formula]\label{lem:killinghessian}
Let ${F}\Subset\HH$ be a smooth region in $(\HH,g_1)$ and put $\Sigma=\partial F$. Write $T^\top=T-{N_T}N$, $H_1=\operatorname{tr}A$, and $\mathcal K=|A|^2+\operatorname{Ric}(N,N)$. For fixed $\eps>0$ and an ambient potential $h\in C^1$, stationarity for $P_\eps(F)-\int_Fh\,d\mathcal L^3$ is equivalent to
\begin{equation}\label{eq:killingfirst}
 h=H_\eps=\Phi_\eps H_1+
 \Div_\Sigma\left(\frac{(\eps^2-1){N_T}}{\Phi_\eps}T^\top\right).
\end{equation}
At such a surface, the Hessian on normal speeds is
$\mathcal B_h(w_1,w_2)=\mathcal B_\eps(w_1,w_2)-\int_\Sigma(Nh)w_1w_2\,d\sigma_{g_1}$, where
\begin{equation}\label{eq:killinghessian}
 \begin{split}
 \mathcal B_\eps(w_1,w_2)=\int_\Sigma\biggl[&
 \frac{\ip{\nabla w_1}{\nabla w_2}}{\Phi_\eps}
 +\frac{\eps^2-1}{\Phi_\eps^3}(T^\top w_1)(T^\top w_2)\\
 &-\left(\frac{\mathcal K}{\Phi_\eps}
 +\frac{\eps^2-1}{\Phi_\eps^3}|\nabla{N_T}|^2\right)w_1w_2\biggr]d\sigma_{g_1}.
 \end{split}
\end{equation}
\end{lemma}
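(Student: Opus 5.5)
The plan is to treat $P_\eps$ as the Riemannian-type anisotropic area $\int_\Sigma\Phi(N)\,d\sigma_{g_1}$ with convex integrand
\[
\Phi(N)=\bigl(|N^H|^2+\eps^2\ip{N}{T}^2\bigr)^{1/2},
\]
where $N^H$ is the horizontal part of $N$, and to apply the standard first and second variation formulas for anisotropic area in the ambient metric $g_1$. Fix a normal variation $\Sigma_s$ with $g_1$-normal speed $w$, extended by a vector field $W=wN$ near $\Sigma$.

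\textbf{First variation.} Differentiate $\int\Phi(N)\,d\sigma$ using $\partial_s d\sigma=(H_1w)\,d\sigma$ and $\partial_sN=-\nabla w$. The latter holds in $g_1$ up to a term in $\nabla_WN$, which is orthogonal to $N$ and vanishes in the tangential direction. This gives
\[
\delta P_\eps=\int\bigl(\Phi H_1 w-\ip{D\Phi(N)^\top}{\nabla w}\bigr)\,d\sigma .
\]
We have $D\Phi(N)=(N^H+\eps^2N_T T)/\Phi$, and its tangential part equals $(N_T/\Phi)(\eps^2-1)T^\top$, because $N^H=N-N_TT$ gives $(N^H)^\top=-N_TT^\top$. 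Integrating by parts produces the divergence term in \eqref{eq:killingfirst}, with the sign fixed by the convention for $\partial_sN$. The volume term contributes $-\int hw$. This gives the equivalence stated in the lemma.

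\textbf{Second variation.} The cleanest route is to write $\Phi(N)=|N|_{\eps}$, the dual norm. Then $P_\eps=\eps\,\mathrm{Area}_{g_\eps}$, and the $g_\eps$-unit normal speed is $w_\eps=\eps w/\Phi$ up to normalization. One then applies the classical Riemannian second-variation formula
\[
\int\bigl(|\nabla^{g_\eps}w_\eps|^2-(|A_\eps|^2+\mathrm{Ric}_\eps)w_\eps^2\bigr)\,d\sigma_{g_\eps}
\]
in $g_\eps$ and rewrites everything in $g_1$ quantities. I expect this translation to be heavy, so I would proceed directly instead.

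Differentiate the first variation $\int(H_\eps-h)w\,d\sigma$ once more at a stationary surface. Only the derivative of $H_\eps-h$ survives, since $H_\eps-h=0$. So we need the linearization $L w=\partial_s H_\eps$, and the form is $\int (Lw)w-(Nh)w^2$. For the first summand, $\partial_s(\Phi H_1)$ uses $\partial_sH_1=-\Delta w-(|A|^2+\mathrm{Ric}(N,N))w$ and $\partial_s\Phi=-D\Phi\cdot\nabla w$. For the divergence summand, one uses $\partial_sN_T=-T^\top w$ (with the left-invariant $T$, up to a $\nabla_W T$ term), $\partial_s\Phi=-\ip{D\Phi}{\nabla w}$, and the variation of $\Div_\Sigma$ and of $T^\top$.

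After symmetrizing and integrating by parts, the quadratic gradient terms combine as follows. The gradient terms give $|\nabla w|^2/\Phi$ plus $(\eps^2-1)(T^\top w)^2/\Phi^3$; the latter arises from the Hessian $D^2\Phi$ restricted to tangential directions, evaluated as $\bigl(|v|^2-\ip{D\Phi}{v}^2\bigr)/\Phi+(\eps^2-1)\ip{v}{T}^2/\Phi$. The zeroth-order terms assemble into $-\mathcal K/\Phi-(\eps^2-1)|\nabla N_T|^2/\Phi^3$. In that last step one uses that $T$ is $g_1$-Killing, so that $\nabla N_T=-A(T^\top)$ plus a skew term, together with stationarity to cancel the terms involving $H_\eps$.

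\textbf{Main obstacle.} The main difficulty is the bookkeeping showing that all cross terms (those linear in $\nabla w$ times $w$) are exact divergences or cancel by stationarity. I would first check the formula in the two limiting cases: $\eps=1$, which gives the classical Jacobi operator, and a model flat vertical plane, before trusting the general assembly.
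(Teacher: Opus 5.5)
\textbf{Genuine gap: the zeroth-order term of the second variation.} Your first variation is correct and is the paper's. Indeed $D\Phi(N)^\top=(\eps^2-1)N_TT^\top/\Phi_\eps=f'(N_T)T^\top$ with $f(s)=\sqrt{1+(\eps^2-1)s^2}$. The $\nabla_WT$ contribution you leave open in $\partial_sN_T$ is $w\ip{N}{\nabla_NT}=0$, because $T$ is Killing. Your gradient coefficients are also right: on tangent vectors $D^2\Phi(N)(v,v)=|v|^2/\Phi_\eps+(\eps^2-1)\ip{v}{T}^2/\Phi_\eps^3$, which is $(f-N_Tf')|\nabla w|^2+f''(T^\top w)^2$.

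The gap is the zeroth-order term, which is the substance of the lemma. You assert its final form and expect the cross terms to be exact divergences or to cancel by stationarity; they are neither. Differentiate $\int_\Sigma f(N_T)J_\tau\,d\sigma_{g_1}$ twice along normal geodesics, using $\ddot N_T=-N_T|\nabla w|^2+2w\ip{\nabla N_T}{\nabla w}$. This needs $\nabla_\Sigma N_T=A(T^\top)-\nabla_NT$ (a plus sign on $A(T^\top)$ with $A=\nabla N$) and $\ip{N}{\nabla_N\nabla_NT}=0$. The cross terms are then $2f'w\ip{\nabla N_T}{\nabla w}-2f'H_1w\,T^\top w$. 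Stationarity enters through $-\int_\Sigma H_1hw^2$, and it only cancels $fH_1^2w^2$ from $\ddot J$ and the piece $H_1\Div_\Sigma(f'T^\top)w^2$ of the integrated cross terms. What remains is the potential
\[
 f\mathcal K+Nh+\Div_\Sigma(f'\nabla N_T)-f'\,T^\top H_1,
\]
in which $\mathcal K$ still has coefficient $f=\Phi_\eps$ instead of $\Phi_\eps^{-1}$.

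\textbf{The missing identity.} What you need is the Jacobi equation for the normal component of the Killing field,
\[
 \Delta_\Sigma N_T+\mathcal K N_T=T^\top H_1,
\]
which holds because the flow of $T$ moves $\Sigma$ by $g_1$-isometries and so leaves $H_1$ unchanged. It converts $\Div_\Sigma(f'\nabla N_T)-f'T^\top H_1$ into $f''|\nabla N_T|^2-N_Tf'\mathcal K$. Then $f-N_Tf'=\Phi_\eps^{-1}$ and $f''=(\eps^2-1)\Phi_\eps^{-3}$ give \eqref{eq:killinghessian}. The first-order formula for $\nabla N_T$ that you cite does not supply this second-order relation. Your proposed check at $\eps=1$ cannot detect its absence, since $f'=f''=0$ there.

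Your linearization route can also be completed, for instance after using $\Div_\Sigma T=0$ to rewrite $H_\eps=H_1/\Phi_\eps+f''\,T^\top N_T$. But it forces you to vary $\Div_\Sigma$, $T^\top$ and the tangential gradient on the moving surface. The paper avoids this by differentiating the pointwise density $f(N_T)J_\tau$ twice and using the Jacobi identity once.
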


The geometric form $\mathcal B_\eps$ is defined on every smooth surface. Its coefficients are globally smooth, including at characteristic points. The frame reduction below derives the local formula directly.

\begin{proof}
For a smooth normal speed $w$, take $\mathcal X_\tau(p)=\exp_p(\tau w(p)N(p))$ and let $J_\tau$ be its $g_1$-area Jacobian. Put $f(s)=\sqrt{1+(\eps^2-1)s^2}$. Differentiation at zero gives
\begin{gather*}
 \dot N=-\nabla w,\qquad \dot J=H_1w,\qquad
 \ddot J=|\nabla w|^2+(H_1^2-\mathcal K)w^2,\\
 \dot{N_T}=-T^\top w,\qquad
 \ddot{N_T}=-{N_T}|\nabla w|^2+2w\ip{\nabla{N_T}}{\nabla w}.
\end{gather*}
For completeness, all derivatives of $N$ along the variation are covariant. Normal orthogonality gives
\[
 \ddot N=-|\nabla w|^2N+2wA(\nabla w),
 \qquad \nabla_\Sigma{N_T}=A(T^\top)-\nabla_NT.
\]
The second identity uses the Killing equation; in particular $\nabla_NT$ is tangent to $\Sigma$. Along each normal geodesic, the Killing equation also gives $\langle N,\nabla_N\nabla_NT\rangle=0$, with $N$ parallel along that geodesic. Consequently
\begin{align*}
 \ddot{N_T}
 &=\langle\ddot N,T\rangle
       +2w\langle\dot N,\nabla_NT\rangle\\
 &=-{N_T}|\nabla w|^2
       +2w\langle A(T^\top)-\nabla_NT,\nabla w\rangle,
\end{align*}
which is the displayed normal-component formula. These identities hold globally because $N$ and $T$ are smooth. The first variation has density $fH_1+\Div_\Sigma(f'T^\top)$, proving \eqref{eq:killingfirst}.

In the second variation subtract $\int_\Sigma(Nh+H_1h)w^2d\sigma_{g_1}$. One integration by parts gives gradient coefficients $f-{N_T}f'$ and $f''$, and potential
\[
 f\mathcal K+Nh+\Div_\Sigma(f'\nabla{N_T})-f'T^\top H_1.
\]
The Killing flow of $T$ and the ordinary mean-curvature variation give
\[
 \Delta_\Sigma{N_T}+\mathcal K{N_T}=T^\top H_1,
\]
so the potential becomes $(f-{N_T}f')\mathcal K+f''|\nabla{N_T}|^2+Nh$. Since
\[
 f-{N_T}f'=\Phi_\eps^{-1},\qquad
 f''=(\eps^2-1)\Phi_\eps^{-3},
\]
polarization proves the formula.
\end{proof}

The Koszul formula and $[X,Y]=-2T$ give the nonzero connection coefficients
\begin{gather*}
 \nabla_XY=-T,\qquad \nabla_YX=T,\\
 \nabla_XT=\nabla_TX=Y,\qquad
 \nabla_YT=\nabla_TY=-X;
\end{gather*}
the remaining coefficients in this frame vanish. We use
$\mathcal R(L,M)=\nabla_L\nabla_M-\nabla_M\nabla_L-\nabla_{[L,M]}$.
Thus the sectional curvatures of the $XY$, $XT$ and $YT$ planes are $-3$, $1$ and $1$, respectively. With $N=\rho\nu+{N_T}T$, this gives
\[
 \operatorname{Ric}(N,N)=2-4\rho^2.
\]
On the noncharacteristic set, direct frame differentiation gives
\[
 A=\begin{pmatrix}\rho H&e_u\\e_u&\kappa\end{pmatrix},
 \quad \nabla_\Sigma u=AU-Z,
 \quad H_\eps=mH+b_\eps \kappa,
 \quad b_\eps=\frac{\eps^2}{\Phi_\eps^3}.
\]
Put $q_N=\rho^2(H^2-4)+(e_u+1)^2$. Equation \eqref{eq:killinghessian} in this frame becomes
\begin{equation}\label{eq:hessian}
 \mathcal B_\eps(w_1,w_2)=\int_{\Sigma}\left[
 \Phi_\eps^{-1}Zw_1Zw_2+b_\eps Uw_1Uw_2
 -(q_N/\Phi_\eps+b_\eps(\kappa^2+(e_u-1)^2))w_1w_2\right]d\sigma_{g_1}.
\end{equation}

The global formula \eqref{eq:killinghessian} has bounded coefficients on the compact surface at each fixed $\eps>0$. Thus $\mathcal B_h$ is continuous on $H^1(\Sigma)\times H^1(\Sigma)$. For a global $C^1$ speed $w$, take smooth $u_j\to w$ in $H^1$. The surface integral is continuous on $H^1(\Sigma)$, so \eqref{eq:estimateform} passes to the limit and holds for $w$.

\begin{lemma}[Extension of weighted tangent fields]\label{lem:globalflux}
On each smooth compact surface without boundary, the fields ${N_T}\rho^2(e_u-1)Z$ and
$e_H{N_T}\rho^3U$ extend by zero to $C^1$ tangent fields. Their derivatives vanish at every characteristic point. The field ${N_T}\rho^3U$ is smooth globally.
\end{lemma}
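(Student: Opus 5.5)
The plan is to localize near a characteristic point and mirror the argument of Lemma~\ref{lem:areacontrol}, where $u\rho Z$ was written as $s\tfrac\pi2W-s(\arcsin\rho)W$ with $W=JN=\rho Z$ smooth. Away from the characteristic set $\{\rho=0\}$ all fields are smooth, since $\rho>0$ there makes $\nu$, $Z$, $U$, $u$, $e_u$, $e_H$ smooth. So only neighbourhoods of characteristic points need work. Near such a point, ${N_T}$ has a constant sign $s$ and $|{N_T}|\ge\tfrac12$. We express everything through globally smooth objects: $N$, $W=\rho Z=JN$ (tangent, since $\ip{JN}{N}=0$ and $JN$ is horizontal), and $\rho^2=1-{N_T}^2$, which is smooth.

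First, ${N_T}\rho^3U$. We have $\rho U=\rho^2T-{N_T}\rho\nu$ and $\rho\nu=N-{N_T}T$. Hence $\rho U=\rho^2T-{N_T}(N-{N_T}T)=T-{N_T}N=T^\top$, which is globally smooth. Therefore ${N_T}\rho^3U={N_T}\rho^2T^\top$ is smooth globally, since $\rho^2=1-{N_T}^2$. This gives the last claim.

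Second, ${N_T}\rho^2(e_u-1)Z={N_T}\rho(Zu)W$. Here $Zu$ should be expressed through the second fundamental form. From $\nabla_\Sigma u=AU-Z$ we get $Zu=\ip{AU}{Z}-1$, so $\rho Zu=\ip{A(\rho U)}{Z}-\rho=\rho^{-1}\ip{A(T^\top)}{W}-\rho$. This leaves one factor $\rho^{-1}$. Multiplying by $\rho^2{N_T}$ then gives
\[
{N_T}\rho^2 (Zu)Z=\rho^{-1}{N_T}\ip{A T^\top}{W}W-{N_T}\rho W .
\]
Both $\ip{AT^\top}{W}W$ and $W$ are smooth and vanish at characteristic points.
\begin{itemize}
\item The quotient by $\rho=|W|$ of the quadratic-in-$W$ term $\ip{AT^\top}{W}W$ is $O(\rho)$, with derivative bounded by $O(1)\cdot|\nabla W|\cdot|W|/|W|$ plus similar terms. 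This is only bounded, not vanishing, so it needs refinement.
\item The term ${N_T}\rho W$ is $|W|W$. It is $C^1$ with derivative $O(\rho)$, hence with zero derivative at characteristic points, exactly as in Lemma~\ref{lem:areacontrol}.
\end{itemize}

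The main obstacle is the first of these terms. The fix is to use more vanishing. At a characteristic point $N=\pm T$, so $T^\top=\rho U$ has size $\rho$, giving $|\ip{AT^\top}{W}W|\le C\rho^3$. More precisely, $T^\top={N_T}^{-1}$ times a smooth field. Indeed $\rho\nu=N-{N_T}T$ is smooth and $T^\top=-{N_T}(\rho\nu)+\rho^2 T$, both smooth of size $O(\rho)$. So the expression is $\rho^{-1}$ times a smooth field vanishing to order $3$ in $W$-homogeneous quantities, i.e.\ of the form $Q(W,W,W)$ with smooth trilinear coefficients. Then $Q(W,W,W)/|W|$ is $O(\rho^2)$ with derivative $O(\rho)$, hence $C^1$ with vanishing derivative at $\{\rho=0\}$.

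The field $e_H{N_T}\rho^3U=e_H{N_T}\rho^2T^\top$ is handled the same way. Write $\rho H=\ip{AZ}{Z}$, so $\rho^2 e_H=\rho^{-1}\ip{AW}{W}-2\rho^2$. Then $e_H{N_T}\rho^2T^\top=\rho^{-1}{N_T}\ip{AW}{W}T^\top-2{N_T}\rho^2T^\top$. The second term is smooth and $O(\rho^3)$. The first is again (cubic in smooth $O(\rho)$ fields)$/|W|$, which is $O(\rho^2)$ with derivative $O(\rho)$.

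Throughout, the generic lemma used is this: if $Q$ is smooth and vanishes to order $k\ge2$ along the characteristic set in the sense $Q=O(\rho^k)$, $DQ=O(\rho^{k-1})$, then $Q/\rho$ is $O(\rho^{k-1})$ with derivative $O(\rho^{k-2})$ off $\{\rho=0\}$. Here $\rho=|W|$ is Lipschitz, and $\{\rho=0\}$ has measure zero by the rank estimate. This yields a $C^1$ extension by zero with vanishing derivative when $k\ge3$.
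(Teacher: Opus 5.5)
Your proposal is correct and is essentially the paper's argument. You write both fields through the smooth fields $W=\rho Z$ and $T^\top=\rho U$ (each of length $\rho$) and the smooth shape operator, as $\rho^{-1}{N_T}\ip{AT^\top}{W}W-{N_T}\rho W$ and $\rho^{-1}{N_T}\ip{AW}{W}T^\top-2{N_T}\rho^2T^\top$; these are exactly the paper's $F_1,F_2$, and the Lipschitz bound on $\rho=|W|$ then gives $|F_i|\le C\rho^2$ and $|\nabla_\Sigma F_i|\le C\rho$, hence the $C^1$ zero extension with vanishing derivative. The only blemish is the false start in your first bullet: the numerator $\ip{AT^\top}{W}W$ is already cubic in $O(\rho)$ fields since $|T^\top|=\rho$, so no ``refinement'' is needed, and the measure-zero remark about the characteristic set plays no role in the extension.
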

\begin{proof}
Set $W=JN=\rho Z$ and recall $T^\top=T-{N_T} N=\rho U$. These are smooth global tangent fields with $|W|=|T^\top|=\rho$. The shape operator $A$ is smooth. On $\rho>0$,
\[
 \ip{AW}{T^\top}=\rho^2e_u,\qquad \ip{AW}{W}=\rho^3H.
\]
Consequently the two fields are
\[
 F_1=\frac{N_T}\rho\bigl(\ip{AW}{T^\top}-\rho^2\bigr)W,
 \qquad
 F_2={N_T}\left(\frac{\ip{AW}{W}}\rho-2\rho^2\right)T^\top.
\]
The divided numerators $\ip{AW}{T^\top}-\rho^2$ and $\ip{AW}{W}$ are $O_\Sigma(\rho^2)$, with derivatives $O_\Sigma(\rho)$; indeed $W,T^\top=O_\Sigma(\rho)$ and $A$ is smooth. Since $\rho=|W|$ is Lipschitz, differentiation of the displayed expressions on the complement of its zero set gives
\[
 |F_i|\le C_\Sigma\rho^2,\qquad
 |\nabla_\Sigma F_i|\le C_\Sigma\rho.
\]
At a characteristic point $p$, $\rho(x)\le C_\Sigma\operatorname{dist}_\Sigma(x,p)$. The zero extension therefore has derivative zero at $p$, and the derivative estimate proves continuity. Finally ${N_T}\rho^3U={N_T}\rho^2T^\top$ is smooth because $\rho^2=1-{N_T}^2$ is smooth.

On $\rho>0$, the matrix of $A$ in the orthonormal frame $(Z,U)$ gives the fixed-surface bounds
\[
 |e_u-1|+|\kappa|+|\rho H|\le C_\Sigma.
\]
These bounds make $e$ and $q_N$ bounded on the whole surface. The global $C^1$ tangent fields admit integration by the ordinary divergence theorem. The constants $C_\Sigma$ here depend on the fixed surface and are used only for extension and integrability.

\end{proof}

\subsection{Second variation on \texorpdfstring{$w_F,N_T,w_K$}{w\_F, N\_T, w\_K}}\label{sec:weightedvariation}
Vertical translation has normal speed $N_T$. To combine its mixed second variation with the self-pairing of $w_F$, choose $w_K$ so that $w_F^2-N_Tw_K=\rho^4$ and the derivative cancellation in \eqref{eq:contract} holds. This leads to
\[
 w_K=2u\rho+(u^2+\rho^2){N_T}.
\]
Their derivatives with respect to the global scalar ${N_T}$ are:
\[
 \frac{dw_F}{d{N_T}}=u,\qquad
 \frac{dw_K}{d{N_T}}=u^2+3\rho^2.
\]
Both derivatives extend continuously to ${N_T}=\pm1$, so $w_F$ and $w_K$ are globally $C^1$ and can be inserted into the second variation by its $H^1$ continuity.

At a surface with $H_\eps=2+r$, the mixed second variation with the vertical translation speed is
\[
 \mathcal B_{2+r}(N_T,w)=-\int_\Sigma w\,Tr\,d\sigma_{g_1}.
\]
Thus this mixed term vanishes when $r=0$. The area identity \eqref{eq:areaflux} gives $2I_F=P_H(3-\bar e_u)$, where $\bar e_u=P_H^{-1}\int_\Sigma e_u\,d\sigma_H$. Consequently the critical volume term $4I_F^2/(3P_H)$ cancels the constant and linear terms in \eqref{eq:integrated}, leaving the squared errors in \eqref{eq:estimatecancellation}.

For a symmetric bilinear form put
\[
 [\mathcal B]_*=\mathcal B(w_F,w_F)-\mathcal B({N_T},w_K).
\]
This combination need not be nonnegative.
The identity $w_F^2-{N_T}w_K=\rho^4$ and $(\partial_uw_F)^2-(\partial_u{N_T})(\partial_uw_K)=-3\rho^4$, with $dw/du=\rho\,dw/d{N_T}$, give
\begin{equation}\label{eq:contract}
 [\mathcal B_\eps]_*
  =-\int_{\Sigma}\frac{\rho^4}{\Phi_\eps}(q_N+3(e_u-1)^2)\,d\sigma_{g_1}
    -\int_{\Sigma}\frac{\eps^2\rho^4}{\Phi_\eps^3}(4\kappa^2+(e_u-1)^2)\,d\sigma_{g_1}.
\end{equation}
To combine \eqref{eq:contract} with the critical volume term, separate $-\EE$ from the $e_u$ and perimeter terms in \eqref{eq:integrated}; the remaining integral retains the negative transverse term $-\NN_\eps$ by Lemma~\ref{lem:residual}. The regularization correction is $j_\eps$, and the remainder after removing the divergence is $R_\eps$. Set
\begin{align*}
 j_\eps&=\rho^2(1-m)(\rho^2H^2+4e_u^2-4e_u+4{N_T}^2)
              -\eps^2m^3((e_u-1)^2+4\kappa^2),\\
 R_\eps&=j_\eps+e_H\bigl[\rho^2{N_T}^2H+\rho(3-4\rho^2)\kappa
                            -2\rho^2(2-e_u)\bigr].
\end{align*}
Here $\rho H=\ip{AZ}{Z}$ and $\kappa=\ip{AU}{U}=Uu$ are the normal curvatures in the $Z$- and $U$-directions.
The fixed-surface bounds in Lemma~\ref{lem:globalflux} make these densities integrable for every fixed $\eps>0$. Rearranging \eqref{eq:contract} gives
\begin{align*}
 [\mathcal B_\eps]_*
 &=-\int_{\Sigma}\rho^3(q_N+3(e_u-1)^2)\,d\sigma_{g_1}+\int_{\Sigma} j_\eps\,d\sigma_H,\\
 \rho^2(q_N+3(e_u-1)^2)
 &=\rho^2(\rho^2H^2+4e_u^2-4e_u+4{N_T}^2).
\end{align*}
The local identities are
\begin{align*}
 Ze_u&=\rho UH-\frac{N_T}\rho
       [2e_u(e_u+1)+\rho^2(H^2-4)],\\
 \Div_\Sigma({N_T}\rho^3U)
    &=-\rho[\rho^2{N_T}^2H+\rho(3-4\rho^2)\kappa].
\end{align*}
For the first identity, $\nabla^\Sigma_ZZ={N_T}H\,U$ and
$\nabla^\Sigma_UZ={N_T}(e_u+1)U/\rho$ give
\[
 [Z,U]=-{N_T}H\,Z-\frac{N_T}{\rho}(e_u+1)U.
\]
Apply $Z(U\theta)-U(Z\theta)=[Z,U]\theta$ to the local horizontal angle.
Using $Z\theta=H$, $U\theta=(e_u-1+2{N_T}^2)/\rho$ from \eqref{eq:centreD},
$Z\rho=-{N_T}(e_u-1)$ and $Z{N_T}=\rho(e_u-1)$ gives the displayed formula for $Ze_u$.
All divided frame formulas are used only on $\rho>0$.
Together with \eqref{eq:fields} and \eqref{eq:e}, these identities give
\begin{align*}
 -\rho^3(q_N+3(e_u-1)^2)
    &=-\rho(e+e_u^2-2e_u+3)+\rho(R_\eps-j_\eps)\\
    &\quad+\Div_\Sigma(e_H{N_T}\rho^3U-{N_T}\rho^2(e_u-1)Z).
\end{align*}
Integrating, using Lemma~\ref{lem:globalflux} to eliminate the divergence term, and adding $\int_{\Sigma} j_\eps\,d\sigma_H$ gives
\begin{equation}\label{eq:integrated}
 \begin{split}
 [\mathcal B_\eps]_*
 &=-\EE-\int_{\Sigma} e_u^2\,d\sigma_H+2\int_{\Sigma} e_u\,d\sigma_H-3P_H
       +\int_{\Sigma} R_\eps\,d\sigma_H.
 \end{split}
\end{equation}

\subsection{Residual estimate}
\begin{lemma}[Uniform residual estimate]\label{lem:residual}
Let $0<\eps\le1$ and let a smooth compact oriented surface $\Sigma$ without boundary satisfy $H_\eps=2+r$, with $|r|\le\delta$ and $|\nabla_\Sigma r|\le\delta\le1$. Then
\begin{equation}\label{eq:Rbound}
 \int_\Sigma R_\eps\,d\sigma_H
 \le-\NN_\eps+C(\eps^2+\delta)(P_H+\EE),
\end{equation}
where $C$ is an absolute constant.
\end{lemma}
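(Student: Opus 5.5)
The estimate is proved by splitting $R_\eps=j_\eps+e_H[\dots]$ and treating the two pieces separately. The negative term $-\NN_\eps$ comes from $j_\eps$, and everything else is shown to be $O(\eps^2+\delta)(P_H+\EE)$ after using the equation $H_\eps=2+r$ to eliminate $e_H$.

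\textbf{Step 1: the correction $j_\eps$.} Write $1-m=\eps^2N_T^2/(\Phi_\eps(\Phi_\eps+\rho))\ge0$, so $0\le1-m\le\min\{1,\eps^2N_T^2/\rho^2\}$. The term $-\eps^2m^3\cdot4\kappa^2$ integrated against $d\sigma_H$ is to be compared with $-\NN_\eps=-\eps^2\int\kappa^2\Phi_\eps^{-2}\,d\sigma_H$. Since $4m^3\ge m^2$ wherever $m\ge1/4^{1/1}$, i.e.\ $m\ge 1/4$, the comparison works on $\{m\ge1/4\}$. On the complement $\rho\lesssim\eps$ the coefficient $\eps^2/\Phi_\eps^2\lesssim 1$, and I keep the leftover $\kappa^2$-term as a separate negative reservoir. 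The positive part $\rho^2(1-m)(\rho^2H^2+4e_u^2+4N_T^2)$ is bounded by $\eps^2N_T^2(\rho^2H^2+4e_u^2+4)$. Writing $\rho^2H^2\le 2\rho^2e_H^2+8\rho^2$ and using \eqref{eq:coercivity}, this is at most $C\eps^2(e+1)$. Integrating against $d\sigma_H$ gives $C\eps^2(\EE+P_H)$.

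\textbf{Step 2: eliminating $e_H$.} From $H_\eps=mH+b_\eps\kappa=2+r$ one gets
\[
 m e_H=r+2(1-m)-b_\eps\kappa .
\]
Each occurrence of $e_H$ in the second piece of $R_\eps$ is multiplied by a weight that is bounded in terms of $\rho$, namely $\rho^2N_T^2H$, $\rho(3-4\rho^2)\kappa$ and $\rho^2(2-e_u)$. This produces three kinds of terms.

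\emph{The $r$-term} is bounded by $\delta\,|{\rm weight}|/m$. Using Cauchy--Schwarz and \eqref{eq:coercivity}, this gives $C\delta(P_H+\EE)$. The exception is the $\rho\kappa$ weight, which is not controlled by $\EE$.

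\emph{The $(1-m)$-term} is $O(\eps^2N_T^2/\rho^2)$ times the weight. It is harmless against the factor $\rho^2$ in the weights, by the same argument as in Step 1.

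\emph{The $b_\eps\kappa$-term} produces $\eps^2\kappa^2\rho^2/\Phi_\eps^3$ from the weight $\rho\kappa$. This is of the same type as $\NN_\eps$ but carries an $O(1)$ coefficient. I expect this to be the main obstacle. It must be absorbed using the reservoir $-\eps^2m^3\cdot4\kappa^2$ rather than estimated, and that requires checking that the coefficient $3-4\rho^2$, after division by $m$, leaves a net negative multiple of at least $\NN_\eps$. Where the sign works out, I would combine it with the $4\kappa^2$ term directly. Where it does not, I would instead integrate by parts.

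\textbf{Step 3: the bare $r\rho\kappa$ term.} For this term and the $\kappa$ terms without an $\eps^2$ prefactor, I use $\kappa=Uu$ and integrate $\rho^2(3-4\rho^2)r\,Uu$ by parts against the smooth field $N_T\rho^3U$ of Lemma~\ref{lem:globalflux}, applying the identity for $\Div_\Sigma(N_T\rho^3U)$ in reverse. This moves the derivative onto $r$, which costs $|\nabla_\Sigma r|\le\delta$, and onto $\rho$-weights, which are bounded. The outcome is $C\delta P_H$ plus terms of the form $\delta e$.

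Uniformity in $\delta/\eps$ comes from never dividing by $\eps$. Every $\kappa$ either carries $\eps^2/\Phi_\eps^3$ and is absorbed into the negative terms, or is integrated by parts. Collecting the three steps gives \eqref{eq:Rbound} with an absolute constant $C$.
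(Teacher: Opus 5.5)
The proposal has a genuine gap: Step 1 does not produce $-\NN_\eps$, and that is the crux of the lemma.

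\textbf{Where $-\NN_\eps$ actually comes from.} The $\kappa^2$ part of $j_\eps$ is $-4\eps^2m^3\kappa^2=-4m\rho^2\cdot\eps^2\kappa^2/\Phi_\eps^2$, while the density of $\NN_\eps$ is $\eps^2\kappa^2/\Phi_\eps^2$. Your inequality $4m^3\ge m^2$ compares with $\eps^2m^2\kappa^2=\rho^2\eps^2\kappa^2/\Phi_\eps^2$ instead. So $j_\eps$ dominates the $\NN_\eps$ density only where $4m\rho^2\ge1$. Near the characteristic set, where $\eps^2/\Phi_\eps^2\to1$ and $\kappa$ is uncontrolled, it gives essentially nothing, so there is no ``leftover reservoir'' there.

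The negativity actually comes from the term you call the main obstacle. The curvature equation gives $\rho e_H=\rho r+d_\eps-\eps^2\Phi_\eps^{-2}\kappa$ with $d_\eps=(2+r)(\Phi_\eps-\rho)$. Substituting this:
\begin{itemize}
\item[(a)] $e_H\rho(3-4\rho^2)\kappa$ contributes $-(3-4\rho^2)\eps^2\kappa^2/\Phi_\eps^2$ (not $\eps^2\rho^2\kappa^2/\Phi_\eps^3$).
\item[(b)] $e_H\rho^2N_T^2H=N_T^2(\rho e_H)(2\rho+\rho e_H)$ contributes $+N_T^2\eps^4\kappa^2/\Phi_\eps^4=+(1-m^2)\eps^2\kappa^2/\Phi_\eps^2$. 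You treated this term as linear in $e_H$ with a bounded weight, but $H$ itself contains $e_H$.
\end{itemize}
Adding $j_\eps$, the total $\kappa^2$ coefficient is $-\frac{\eps^2}{\Phi_\eps^2}(2-4\rho^2+m^2+4\rho^2m)$. Using $m\ge\rho$ one gets $2-4\rho^2+m^2+4\rho^2m\ge2-3\rho^2+4\rho^3\ge\frac74$. This pointwise check is missing from your proposal. Your fallback of integrating by parts where the sign fails is not available for a term quadratic in $\kappa=Uu$: it produces $U\kappa$, which nothing controls.

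\textbf{The $\kappa$-linear terms.} The surplus $\frac74>1$ is needed because these terms are not harmless, contrary to Step~2. The $(1-m)$ and cross contributions give $L\kappa$ with $|L|\le C\eps^2\Phi_\eps^{-1}(1+|e_u|)$. Since $\EE$ does not control $\kappa$, they can only be absorbed by Young's inequality,
\[
|L\kappa|\le\tfrac{3\eps^2}{4\Phi_\eps^2}\kappa^2+C\eps^2(1+e),
\]
spending $\frac34$ of the $\frac74$ and leaving exactly $-\NN_\eps$.

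\textbf{The bare $r$-term.} Dividing by $m$ makes your bare term $r\Phi_\eps(3-4\rho^2)\kappa\,d\sigma_H$ rather than $r\rho(3-4\rho^2)\kappa\,d\sigma_H$. The integration by parts against the smooth field $N_T\rho^3U$ in Step~3 handles exactly the latter, whose $u$-antiderivative $-N_T\rho^3$ vanishes at both endpoints. The difference has size $C\delta\eps^2|\kappa|/\Phi_\eps$ and again needs the same absorption. Writing $e_H=r+\xi/\rho$ with $\xi=\rho(H-2-r)$, as the paper does, avoids this. Apart from that, your Step~3 is the paper's argument.
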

\begin{proof}
The estimate is uniform in $\delta/\eps$. The negative quadratic term in $\kappa$ absorbs the regularization terms linear in $\kappa$. Directly absorbing $r\rho(3-4\rho^2)\kappa$ would introduce a $\delta^2/\eps^2$ loss; instead, integration by parts transfers its derivative to $r$, using the bound on $\nabla_\Sigma r$.

The energy decomposition and $H=(e_H+e_u)+(2-e_u)$ give
\[
 e_u^2+\rho^2H^2\le C(1+e).
\]
Together with $0\le\rho^2(1-m)\le\eps^2{N_T}^2/2$, this yields
\begin{equation}\label{eq:jexplicit}
 j_\eps\le-4\eps^2m^3\kappa^2+C\eps^2(1+e).
\end{equation}

Put $h=2+r$ and $d_\eps=h(\Phi_\eps-\rho)$. The prescribed curvature identity $h=mH+\eps^2\Phi_\eps^{-3}\kappa$ gives
\[
 \rho(H-h)=d_\eps-\frac{\eps^2}{\Phi_\eps^2}\kappa,
 \qquad |d_\eps|\le\frac{3\eps^2}{\Phi_\eps}.
\]
Writing $\xi=\rho(H-h)$ in the definition of $R_\eps$ gives
\[
 \begin{split}
 R_\eps={}&j_\eps+{N_T}^2\xi^2
       +2\rho({N_T}^2-2+e_u)\xi+(3-4\rho^2)\xi\kappa\\
 &+r\rho(3-4\rho^2)\kappa
       +r\rho^2[{N_T}^2(2H-2-r)-2(2-e_u)].
 \end{split}
\]
The last term has absolute value at most $C\delta(1+e)$. Substituting the formula for $\xi$ and using \eqref{eq:jexplicit}, the coefficient of $\kappa^2$ in the resulting upper bound is
\[
 -\frac{\eps^2}{\Phi_\eps^2}
       (2-4\rho^2+m^2+4\rho^2m).
\]
Since $m\ge\rho$,
\[
 2-4\rho^2+m^2+4\rho^2m
 \ge2-3\rho^2+4\rho^3
 =\tfrac74+(\rho-\tfrac12)^2(4\rho+1)\ge\tfrac74.
\]
Apart from $r\rho(3-4\rho^2)\kappa$, the linear coefficient is
\[
 L=(3-4\rho^2)d_\eps
       -\frac{2\eps^2}{\Phi_\eps^2}
          [{N_T}^2d_\eps+\rho({N_T}^2-2+e_u)].
\]
The bounds $\rho\le\Phi_\eps$, $\eps\le\Phi_\eps$ and $|d_\eps|\le3\eps^2/\Phi_\eps$ give
$|L|\le C\eps^2\Phi_\eps^{-1}(1+|e_u|)$; the constant term has absolute value at most $C(\eps^2+\delta)(1+e)$. Squaring the bound gives
\[
 L^2\le C\eps^4\Phi_\eps^{-2}(1+e_u^2).
\]
Dividing by the coefficient $\eps^2\Phi_\eps^{-2}$ in Young's inequality leaves $C\eps^2(1+e)$, uniformly in $\rho$ and $\eps$. Thus
\[
 |L\kappa|\le\frac{3\eps^2}{4\Phi_\eps^2}\kappa^2+C\eps^2(1+e),
\]
and hence
\begin{equation}\label{eq:residualdensity}
 R_\eps\le-\frac{\eps^2\kappa^2}{\Phi_\eps^2}
       +C(\eps^2+\delta)(1+e)+r\rho(3-4\rho^2)\kappa.
\end{equation}

For the remaining $r\kappa$ term, use
$-({N_T}\rho^3)'=\rho^2(3-4\rho^2)$ and $\Div_\Sigma U=-{N_T} H$,
\begin{equation}\label{eq:rA}
 \begin{split}
 \int_{\Sigma} r\rho(3-4\rho^2)\kappa\,d\sigma_H
 &=-\int_{\Sigma} r\,U({N_T}\rho^3)\,d\sigma_{g_1}\\
 &=\int_{\Sigma}[{N_T}\rho^2Ur-r{N_T}^2\rho^2H]\,d\sigma_H.
 \end{split}
\end{equation}
Lemma~\ref{lem:globalflux} justifies \eqref{eq:rA}. Its absolute value is bounded by
\[
 \delta\int_{\Sigma}\rho^2(1+|H|)\,d\sigma_H
       \le C\delta(P_H+\EE).
\]
Integrating \eqref{eq:residualdensity} gives \eqref{eq:Rbound}. For fixed $\eps>0$, $\Phi_\eps\ge\eps$ and the fixed-surface bound on $\kappa$ make these integrals finite. The characteristic set is area-null, so the calculation on $\rho>0$ suffices.
\end{proof}

\subsection{The quantitative estimate}\label{sec:quantitativeestimate}
\begin{proof}[Proof of Proposition~\ref{prop:prescribedestimate}]
Write $P=P_H(F)$, $I_F=\int_\Sigma w_F\,d\sigma_{g_1}$ and
$\bar e_u=P^{-1}\int_\Sigma e_u\,d\sigma_H$. Equation~\eqref{eq:areaflux} and Cauchy--Schwarz give
\begin{equation}\label{eq:fluxbounds}
 2I_F=P(3-\bar e_u),\qquad
 I_F^2\le\frac{9P^2+P\EE}{2}.
\end{equation}
For $h=2+r$, the vertical-translation identity and $w_F^2-{N_T}w_K=\rho^4$ give
\[
 \mathcal B_h(w_F,w_F)=[\mathcal B_\eps]_*-\mathcal A_r,
 \qquad
 \mathcal A_r=\int_\Sigma[\rho^4Nr+w_KTr]\,d\sigma_{g_1}.
\]
Since $w_K$ is bounded and $w_F\ge1$,
$|\mathcal A_r|\le C\delta I_F\le C\delta(P+\EE)$.
Substituting \eqref{eq:fluxbounds} into \eqref{eq:integrated} and completing the square in $e_u$ gives the exact identity
\begin{equation}\label{eq:estimatecancellation}
 \begin{split}
 \mathcal B_h(w_F,w_F)+\frac4{3P}I_F^2
 ={}&-\EE-\int_\Sigma(e_u-\bar e_u)^2\,d\sigma_H
       -\frac{2P}{3}\bar e_u^2\\
 &+\int_\Sigma R_\eps\,d\sigma_H-\mathcal A_r.
 \end{split}
\end{equation}
By \eqref{eq:fluxbounds},
\[
 \left(c-\frac4{3P}\right)_+I_F^2
 \le\frac12\left(cP-\frac43\right)_+(9P+\EE).
\]
Using \eqref{eq:Rbound} and the bound on $\mathcal A_r$, adding $(c-4/(3P))I_F^2$, and using this inequality, we obtain from \eqref{eq:estimateform}
\[
 0\le\mathcal B_h(w_F,w_F)+cI_F^2
 \le-\EE-\NN_\eps+C_*q(P+\EE),
\]
where $C_*$ is an absolute constant.
For $q\le q_0\le\min\{1,(2C_*)^{-1}\}$, absorption gives
$\EE/2+\NN_\eps\le C_*qP$, proving \eqref{eq:robustCE}.
The integration in \eqref{eq:rA} keeps this estimate uniform in $\delta/\eps$.
\end{proof}

\section{Smooth stable approximations}\label{sec:selection}
Fix the bounded open density-one representative of an arbitrary horizontal minimizer $E$. Its boundary is Ahlfors regular of homogeneous dimension three and hence has Haar measure zero \cite{LR}. Normalize it as in Section~\ref{sec:prelim}, with $V=|E|$, $P=P_H(E)$ and $3P=8V$.
Fix a sufficiently large Euclidean ball $\mathcal O$ with $\overline E\Subset\mathcal O$ and a bounded neighbourhood $\mathcal U$ of $\overline{\mathcal O}$. All competitors below are subsets of $\mathcal O$, with full perimeter after extension by zero.

\begin{proposition}[Smooth stable approximation]\label{prop:stableapprox}
There are a fixed Euclidean ball $\mathcal B\Subset\mathcal O$, a sequence $\eps_j\downarrow0$, smooth regions $F_j\Subset\mathcal B$, and functions $r_j\in C^\infty(\mathcal U)$ such that $\partial F_j$ is stationary for
\[
 P_{\eps_j}(F)-\int_F(2+r_j)\,d\mathcal L^3.
\]
Writing $\EE_j=\EE(F_j)$ and $\NN_j=\NN_{\eps_j}(F_j)$, one has
\begin{equation}\label{eq:stableapprox}
 F_j\longrightarrow E\ \text{in }L^1,\qquad
 P_H(F_j)\longrightarrow P,\qquad
 \|r_j\|_{C^1(\mathcal U)}+\EE_j+\NN_j\longrightarrow0.
\end{equation}
Moreover, $c_j=-I''(|F_j|)\to1/(2V)$ and, for every smooth normal speed $v$ on $\Sigma_j=\partial F_j$,
\begin{equation}\label{eq:fullstability}
 \mathcal B_{2+r_j}(v,v)
 +c_j\left(\int_{\Sigma_j}v\,d\sigma_{g_1}\right)^2\ge0.
\end{equation}
\end{proposition}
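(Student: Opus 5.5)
The plan is to obtain $F_j$ as minimizers of a penalized functional on subsets of $\mathcal O$:
\[
 \mathcal G_j(F)=P_{\eps_j}(F)-I(|F|)+\lambda_j\,\Psi(F),
\]
where $\Psi$ is a smooth pinning penalty that vanishes quadratically on a neighbourhood of $E$ in $L^1$. Following \cite[Sections~3.1--3.2]{HigherIsoperimetry}, take smooth bumps $\phi_\pm\ge0$: the bump $\phi_-$ is supported in a compact subset of $E$ and $\phi_+$ in a compact subset of $\mathcal O\setminus\overline E$. Set $\Psi(F)=\bigl(\int_F\phi_+-\int_{F}\phi_-+\int\phi_-\bigr)^2$, or use a family of such linear functionals indexed by a countable $L^1$-dense set of bumps with summable weights. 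In either case $\Psi(F)=0$ iff $F$ agrees with $E$ on the supports, and $\Psi$ is a smooth function of finitely many (or summably many) volume integrals. Its first variation is then $\int(\text{smooth potential})w$, and its second variation is a nonnegative square plus a $C^1$-small potential term.

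\emph{Existence and regularity.} First, $P_\eps$ is an elliptic $g_\eps$-area, and $-I(|F|)$ together with the penalty are volume-type perturbations. Minimizers in $\mathcal O$ therefore exist by lower semicontinuity and compactness in $g_1$-BV. They are smooth by the regularity theory of almost-minimizers of elliptic parametric integrands in dimension three, and they stay away from $\partial\mathcal O$ (hence inside a fixed ball $\mathcal B$) by a density/comparison argument using the uniform lower density estimates of $P_H$-quasiminimizers. The Euler--Lagrange equation reads $H_{\eps_j}=I'(|F_j|)+\lambda_j\,\partial\Psi$. Rescaling so that $I'(|F_j|)=2$ (recall $I'(V)=3P/(4V)=2$ under $3P=8V$, and $I'(|F_j|)\to2$) gives $r_j=\lambda_j\,\partial\Psi\,/\,\text{scale}$ plus the rescaling defect. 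This $r_j$ is smooth on $\mathcal U$ and has $C^1$ norm controlled by $\lambda_j\Psi(F_j)^{1/2}$ and $\bigl||F_j|-V\bigr|$.

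\emph{Convergence.} Since $P_\eps\downarrow P_H$ on smooth sets and $P_\eps\ge P_H$, comparison with a smooth approximation $E_k$ of $E$ with $P_H(E_k)\to P$ gives $\mathcal G_j(F_j)\le o(1)+\lambda_j\Psi(E_k)$. Meanwhile $P_H(F)-I(|F|)\ge0$ for every $F$. Choosing $\eps_j\to0$ and $\lambda_j\to\infty$ slowly relative to the approximation rates forces three things: $\lambda_j\Psi(F_j)\to0$, $P_H(F_j)-I(|F_j|)\to0$, and $P_{\eps_j}(F_j)-P_H(F_j)\to0$. By compactness, any $L^1$ limit is a minimizer agreeing with $E$ on all bump supports, hence equal to $E$; so $F_j\to E$ in $L^1$ and $P_H(F_j)\to P$. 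For $\|r_j\|_{C^1}\to0$ we need $\lambda_j\sqrt{\Psi(F_j)}\to0$. This is the delicate rate choice. I would handle it with the quadratic form of $\Psi$: the gradient is $2\lambda_j\sqrt{\Psi}\,\phi$, and $\lambda_j\Psi\to0$ alone does not suffice. So I would pick $\lambda_j$ so that $\lambda_j\Psi(E_{k(j)})\le\lambda_j^{-1}$, giving $\lambda_j^2\Psi(F_j)\to0$.

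\emph{Stability and the energy.} Minimality gives nonnegative second variation of $\mathcal G_j$. The Hessian of $-I(|F|)$ contributes $-I''(|F_j|)(\int v)^2=c_j(\int v)^2$, and $c_j\to-I''(V)=4/(3P)=1/(2V)$. The penalty's Hessian is $2\lambda_j(\int\phi v)^2$ plus a term absorbed into $r_j$. The main obstacle is the positive square $2\lambda_j(\int\phi v)^2$, which is not of the form in \eqref{eq:fullstability}. I would handle it by making the bumps supported away from $\partial E$, hence away from $\Sigma_j$ for large $j$ by the Hausdorff convergence of boundaries that follows from uniform density estimates. Then $\int_{\Sigma_j}\phi v=0$ and the square vanishes identically, and the penalty's first variation is also zero on $\Sigma_j$. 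The penalty then only pins $F_j$, and $r_j$ reduces to the volume rescaling defect, which tends to $0$. Finally, with $q_j=\eps_j^2+\|r_j\|_{C^1}+(c_jP_H(F_j)-4/3)_+\to0$, Proposition~\ref{prop:prescribedestimate} gives $\EE_j+\NN_j\le Cq_jP_H(F_j)\to0$, completing \eqref{eq:stableapprox}.
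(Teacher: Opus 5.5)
There is a genuine gap in the stability step. Your way of removing the positive square $2\lambda_j\bigl(\int_{\Sigma_j}\phi v\,d\sigma_{g_1}\bigr)^2$ asks two incompatible things of the bumps.

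For ``any $L^1$ limit agrees with $E$ on all bump supports, hence equals $E$'', the positivity sets of the bumps must exhaust $\mathcal O\setminus\partial E$ up to a null set. So they must accumulate at $\partial E$.

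Bumps kept at a fixed distance $\delta$ from $\partial E$ cannot pin $E$. A minimizer differing from $E$ only in the $\delta$-neighbourhood of $\partial E$ (for instance $\mathbf b\cdot E$ with $\mathbf b$ close to the identity) also has $\Psi=0$. So nothing forces $F_j\to E$, and the Hausdorff convergence $\Sigma_j\to\partial E$ you invoke presupposes exactly that.

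If instead the supports accumulate at $\partial E$, as in your countable $L^1$-dense family, then at each fixed $j$ Hausdorff convergence separates from $\Sigma_j$ only the bumps lying farther from $\partial E$ than $\Sigma_j$. The remaining bumps still contribute, with the factor $\lambda_j\to\infty$, and minimality yields only
\[
 \mathcal B_{2+r_j}(v,v)+c_j\Bigl(\int_{\Sigma_j}v\,d\sigma_{g_1}\Bigr)^2
 +2\lambda_j\sum_iw_i\Bigl(\int_{\Sigma_j}\phi_iv\,d\sigma_{g_1}\Bigr)^2\ge0.
\]
This is weaker than \eqref{eq:fullstability}, and it is not the hypothesis \eqref{eq:estimateform} of Proposition~\ref{prop:prescribedestimate}.

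The missing observation is that no square is needed. Your linear quantity $\int_F\phi_+\,d\mathcal L^3+\int_{E\setminus F}\phi_-\,d\mathcal L^3=\int_{F\triangle E}|\phi_--\phi_+|\,d\mathcal L^3$ is already nonnegative.

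The paper takes one smooth $\psi_E$ with $|\psi_E|\le1$, positive on $E$, negative on $\mathcal O\setminus\overline E$ and zero on $\partial E$, built from bumps with rapidly decreasing coefficients on both sides. It then uses the penalty $\lambda\mathscr P_E(F)=\lambda\bigl(\int_E\psi_E\,d\mathcal L^3-\int_F\psi_E\,d\mathcal L^3\bigr)=\lambda\int_{F\triangle E}|\psi_E|\,d\mathcal L^3$. This penalty has four useful properties:
\begin{itemize}
\item it is affine in $\mathbf 1_F$ and vanishes only at $E$;
\item \eqref{eq:pinL1} turns $\mathscr P_E(F_j)\to0$ into $F_j\to E$;
\item its first variation only adds $\lambda_j\psi_E$ to $r_j=I'(|F_j|)-2+\lambda_j\psi_E$;
\item its second variation is the potential term already contained in $\mathcal B_{2+r_j}$, so unconstrained minimality gives \eqref{eq:fullstability} with nothing extra.
\end{itemize}

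The paper also sends $\lambda_j\downarrow0$ rather than to infinity. Comparison sets with $\mathcal J_{\eps_j,\lambda_j}(G_j)\le\lambda_j^2$ and \eqref{eq:penaltylower} give $\mathscr P_E(F_j)\le\lambda_j$. Hence $\|r_j\|_{C^1(\mathcal U)}\le C(|F_j\triangle E|+\lambda_j)\to0$ with no rate balancing. Minimality also gives \eqref{eq:setcomparison} with a constant independent of $j$, so Lemma~\ref{lem:selectionconfinement} applies.

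Both of these points are problematic in your scheme. Your condition $\lambda_j\Psi(E_{k(j)})\le\lambda_j^{-1}$ leaves $\lambda_j^2\Psi(E_{k(j)})$ merely bounded, not tending to zero. It also ignores the term $\lambda_j\bigl(P_{\eps_j}(E_{k(j)})-I(|E_{k(j)}|)\bigr)$ in the bound for $\lambda_j^2\Psi(F_j)$. Finally, with $\lambda_j\to\infty$ the quadratic penalty only yields \eqref{eq:setcomparison} with a constant of order $\lambda_j$, so your ``uniform'' density and confinement estimates need an additional argument.
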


Choose $\psi_E\in C_c^\infty(\HH)$ with $|\psi_E|\le1$, positive on $E$, negative on $\mathcal O\setminus\overline E$, and zero on $\partial E$. To construct it, sum positive bumps in $E$ and in $\HH\setminus\overline E$, with coefficients decreasing in every $C^k$ norm, subtract the second sum from the first, and multiply by a compactly supported cutoff equal to one on $\overline{\mathcal O}$. Set
\[
 \mathscr P_E(F)=\int_E\psi_E\,d\mathcal L^3-\int_F\psi_E\,d\mathcal L^3
 =\int_{F\triangle E}|\psi_E|\,d\mathcal L^3\ge0.
\]
For every $a>0$,
\begin{equation}\label{eq:pinL1}
 |F\triangle E|\le|\mathcal O\cap\{|\psi_E|\le a\}|
        +a^{-1}\mathscr P_E(F).
\end{equation}
The first term tends to zero as $a\downarrow0$, because the zero set of $\psi_E$ in $\mathcal O$ is $\partial E$.

Set $I(0)=0$. By \eqref{eq:profile},
$I(w)=P(w/V)^{3/4}$ for $w\ge0$. This identity uses only dilation invariance and $P=I(V)$, without the sharp value of $I(1)$. For $\lambda>0$, define
\begin{equation}\label{eq:penalty}
 \mathcal J_{\eps,\lambda}(F)=P_\eps(F)-I(|F|)+\lambda\mathscr P_E(F),
 \qquad F\subset\mathcal O.
\end{equation}
For each fixed $\eps>0$, $P_\eps$ supplies ellipticity for the regularity argument. The penalty $\mathscr P_E$ selects $E$: its convergence to zero implies $F\to E$ in $L^1$ by \eqref{eq:pinL1}. At a stationary set of positive volume, $-I(|F|)$ contributes
\[
 -I''(|F|)\left(\int_{\partial F}v\,d\sigma_{g_1}\right)^2
\]
to the full second variation, in addition to the prescribed-curvature terms. As $|F|\to V$, its coefficient tends to $4/(3P)$, the critical value in Proposition~\ref{prop:prescribedestimate}.
The pairings in Section~\ref{sec:rigidity} involve contact speeds whose volume derivatives need not vanish. Applying Cauchy--Schwarz to these pairings therefore uses nonnegativity on all normal speeds, as in \eqref{eq:fullstability}. The unconstrained minimization of $\mathcal J_{\eps,\lambda}$ retains precisely this full second variation, including the volume term above.
Here $P_0=P_H$. Since $P_\eps(F)\ge P_H(F)\ge I(|F|)$,
\begin{equation}\label{eq:penaltylower}
 \mathcal J_{\eps,\lambda}(F)
 \ge P_H(F)-I(|F|)+\lambda\mathscr P_E(F)\ge0.
\end{equation}

\begin{lemma}[Uniform density and confinement]\label{lem:selectionconfinement}
Fix $C>0$. For $0<\eps\le1$, suppose a set $F\subset\mathcal O$ of finite $g_1$-perimeter satisfies
\begin{equation}\label{eq:setcomparison}
 P_\eps(F)\le P_\eps(G)+C|F\triangle G|
\end{equation}
for every $G\subset\mathcal O$.
There are $s_0,c_d,d_0>0$ and a Euclidean ball $\mathcal B\Subset\mathcal O$, independent of $\eps,F$, such that, for every $x\in\supp D\mathbf1_F$ and $0<s\le s_0$,
\begin{equation}\label{eq:selectiondensity}
 |F\cap B_s^{g_1}(x)|\ge c_ds^4.
\end{equation}
If $|F\triangle E|<d_0$, then $F$ has a representative $F\Subset\mathcal B$.
\end{lemma}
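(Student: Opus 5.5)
The plan is to treat $F$ as an almost-minimizer of $P_\eps$ with the linear error $C|F\triangle G|$, and to obtain the density bound by the standard comparison-with-removal argument. The one new point is that every constant must be uniform in $\eps\in(0,1]$. Confinement then follows by combining the density bound with the $L^1$ closeness to $E$.

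\emph{Density.} For $x\in\supp D\mathbf1_F$ put $m(s)=|F\cap B_s^{g_1}(x)|$, which is positive for every $s>0$. Compare $F$ with $G=F\setminus B_s^{g_1}(x)$. By \eqref{eq:setcomparison},
\[
 P_\eps(F;B_s)\le P_\eps(B_s;F^{(1)})+Cm(s)\quad\text{for a.e. } s,
\]
where the right-hand perimeter is computed on the slice $\partial B_s\cap F$. The slice is controlled by the coarea formula: $P_\eps(B_s;F)\le\Phi_{\max}\,\mathcal H^2_{g_1}(\partial B_s\cap F)\le m'(s)$, with $\Phi_\eps\le1$. Adding $P_\eps(B_s;F)$ to both sides gives
\[
 P_\eps(F\cap B_s)\le 2m'(s)+Cm(s).
\]
We need a lower isoperimetric bound for $P_\eps$ that does not degenerate as $\eps\to0$. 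The bound $P_\eps\ge P_H$ provides it, together with the horizontal isoperimetric inequality \eqref{eq:profile}:
\[
 P_H(F\cap B_s)\ge I(1)\,m(s)^{3/4}.
\]
For $s\le s_0$ we have $m(s)\le|B_{s_0}^{g_1}|$, which is small, so the term $Cm$ can be absorbed: $Cm\le\tfrac12 I(1)m^{3/4}$. This leaves the differential inequality $m'\ge c\,m^{3/4}$, whose integration gives $m(s)\ge c_ds^4$.

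The $g_1$-balls have Euclidean size comparable to $s$ on the bounded set $\mathcal O$, and by left-invariance of $g_1$ the constants do not depend on $x$. The radius $s_0$ must be small enough that $B_{s_0}(x)\subset\mathcal O$ is not needed: competitors $F\setminus B_s$ remain subsets of $\mathcal O$ automatically.

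\emph{Confinement.} Choose $\mathcal B$ to be a Euclidean ball with $\overline E\Subset\mathcal B\Subset\mathcal O$, and let $\eta$ be the Euclidean distance from $\overline E$ to $\partial\mathcal B$. Choose $s_0$ smaller still, so that each $B^{g_1}_{s_0}(x)$ has Euclidean diameter below $\eta/2$. Suppose some $x\in\supp D\mathbf1_F$ lies outside the $\eta/2$-neighbourhood of $\overline E$. Then $B_{s_0}(x)$ is disjoint from $E$ up to a null set, so
\[
 |F\triangle E|\ge|F\cap B_{s_0}(x)|\ge c_ds_0^4.
\]
Taking $d_0=c_ds_0^4$ gives a contradiction. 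Hence $\supp D\mathbf1_F$ lies in a compact subset of $\mathcal B$. Consequently $\mathbf1_F$ is constant a.e. on the connected set $\mathcal O\setminus\overline{\mathcal B'}$, where $\mathcal B'$ is the $\eta/2$-neighbourhood. The constant cannot be $1$, because then $|F\triangle E|\ge|\mathcal O\setminus\mathcal B'|$, which exceeds $d_0$ once $s_0$ is small. Therefore the representative $F\cap\mathcal B'$ is compactly contained in $\mathcal B$.

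\emph{Main obstacle.} The delicate step is the slice estimate. The total variation of $\mathbf1_{F\setminus B_s}$ splits into $P_\eps(F;\HH\setminus\overline B_s)$ and the slice, and this requires the usual a.e.-$s$ argument for the anisotropic integrand $\Phi_\eps$. I would handle it by noting that $\Phi_\eps$ is a convex, $1$-homogeneous integrand in $N$ bounded by $|N|$, so the Euclidean-type slicing inequality holds with constant $1$ uniformly in $\eps$.
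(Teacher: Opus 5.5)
Your density argument is the paper's: you compare with $F\setminus B_s$, bound the slice by coarea and cutting with $\Phi_\eps\le1$, use $P_\eps\ge P_H$ together with horizontal isoperimetry, absorb $Cm$ for $s\le s_0$, and integrate $m'\ge c\,m^{3/4}$. Your confinement step also follows the paper, applying the density bound on $g_1$-balls that miss $\overline E$.

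The one step that fails as written is the claim that $\mathcal O\setminus\overline{\mathcal B'}$ is connected, where $\mathcal B'$ is the Euclidean $\eta/2$-neighbourhood of $\overline E$. Nothing is known yet about the topology of the arbitrary minimizer $E$, since this lemma is used before $E$ is identified. Suppose $\overline E$ enclosed a cavity containing a Euclidean ball of radius larger than $\eta/2$. Then that set has a bounded component on which $\mathbf1_F$ could equal $1$, and $F\cap\mathcal B'$ would not be a representative of $F$.

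The repair is what the paper does. You have shown $\supp D\mathbf1_F\subset\mathcal B'$, so enclose $\mathcal B'$ in a Euclidean ball $\mathcal B''$ with $\overline{\mathcal B''}\subset\mathcal B$; the ball concentric with $\mathcal B$ of radius $R-\eta/2$, where $R$ is the radius of $\mathcal B$, works. The set $\HH\setminus\overline{\mathcal B''}$ is connected, and $D\mathbf1_F$ vanishes there, with $F$ extended by zero. It also contains $\HH\setminus\mathcal O$, where $\mathbf1_F=0$. Hence $\mathbf1_F=0$ a.e. outside $\overline{\mathcal B''}\Subset\mathcal B$. No volume argument is needed to exclude the constant $1$, though your volume argument on the connected shell $\mathcal O\setminus\overline{\mathcal B''}$ would also work.
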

\begin{proof}
Fix $x\in\supp D\mathbf1_F$, put $r(q)=\operatorname{dist}_{g_1}(x,q)$, $B_s=\{r<s\}$ and $m(s)=|F\cap B_s|$. Since $dV_{g_1}=d\mathcal L^3$ and $|\nabla_{g_1}r|_{g_1}=1$ almost everywhere, the coarea formula in local coordinates \cite[(12.6)--(12.7), pp.~69--70]{Simon} makes $m$ absolutely continuous, with $m'(s)=\int_{\partial B_s}\mathbf1_F\,d\sigma_{g_1}$ for almost every $s$, using the BV trace. At such radii the traces agree and the sphere carries no perimeter mass. The finite-perimeter cutting formula \cite[Proposition~4.3]{Simmons} and $\Phi_\eps(\nabla_{g_1}r)\le1$ then give
\begin{align*}
 P_\eps(F\setminus B_s)
 &\le P_\eps(F)-P_\eps(F;B_s)+m'(s),\\
 P_H(F\cap B_s)&\le P_\eps(F;B_s)+m'(s).
\end{align*}
Apply \eqref{eq:setcomparison} to deletion, and then apply nonsharp horizontal isoperimetry \cite{LR} with constant $c_{\mathrm{iso}}>0$ to the intersection, to obtain
\[
 P_\eps(F;B_s)\le m'(s)+Cm(s),\qquad
 c_{\mathrm{iso}}m(s)^{3/4}\le P_H(F\cap B_s)\le2m'(s)+Cm(s).
\]
Deletion is admissible even at $\partial\mathcal O$. Choose $s_0$ with $C\sup_{x\in\overline{\mathcal O}}|B_{s_0}^{g_1}(x)|^{1/4}\le c_{\mathrm{iso}}/2$. Then $Cm(s)\le(c_{\mathrm{iso}}/2)m(s)^{3/4}$, so $m'(s)\ge(c_{\mathrm{iso}}/4)m(s)^{3/4}$. Boundary support gives $m(a)>0$ for every $a>0$. Integrating the differential inequality from $a$ to $s\le s_0$ and letting $a\downarrow0$ gives $m(s)^{1/4}\ge(c_{\mathrm{iso}}/16)s$, since $m(a)\le|B_a|\to0$. This proves \eqref{eq:selectiondensity} with $c_d=(c_{\mathrm{iso}}/16)^4$, uniformly in $\eps$.

If $d=\operatorname{dist}_{g_1}(x,\overline E)>0$, then
$c_ds^4\le|F\triangle E|$ for $s<\min\{d,s_0\}$. For $|F\triangle E|<c_ds_0^4$ this gives
\[
 \operatorname{dist}_{g_1}(x,\overline E)
       \le(|F\triangle E|/c_d)^{1/4}.
\]
Shrink $d_0$ so this neighbourhood lies compactly in a fixed Euclidean ball $\mathcal B\Subset\mathcal O$, and choose a smaller ball $\mathcal B'\Subset\mathcal B$ still containing it. Then $D\mathbf1_F=0$ outside $\mathcal B'$; connectedness of that exterior and finite volume make $\mathbf1_F$ zero there. Thus $|F\setminus\mathcal B'|=0$, and $F$ has a representative $F\Subset\mathcal B$.
\end{proof}

\begin{lemma}[Elliptic regularity]\label{lem:fixedregularity}
For each fixed $\eps>0$, a set $F\Subset\mathcal O$ of finite $g_1$-perimeter satisfying \eqref{eq:setcomparison} has a $C^{1,\alpha}$ boundary representative for some $\alpha>0$. If its first variation prescribes smooth curvature, the boundary is smooth.
\end{lemma}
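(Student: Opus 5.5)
The plan is to recognise $P_\eps$ as an elliptic parametric integrand on $(\HH,g_1)$ and to read \eqref{eq:setcomparison} as almost-minimality with a linear volume error. Then the regularity theory for almost-minimizers of elliptic anisotropic functionals applies in Euclidean charts.

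\emph{Ellipticity.} In coordinates the integrand is
\[
 \Phi_\eps(N)=(\rho^2+\eps^2N_T^2)^{1/2}
 =\bigl(\langle N,X\rangle^2+\langle N,Y\rangle^2+\eps^2\langle N,T\rangle^2\bigr)^{1/2}.
\]
It is the dual norm of the left-invariant metric $g_\eps$ restricted to covectors. It is therefore a smooth, even, positively one-homogeneous function of the Euclidean normal covector with smooth dependence on the point, and it is uniformly convex: its unit ball is an ellipsoid whose axes are bounded below by $\eps$ on compact sets. Concretely, $\eps\operatorname{Area}_{g_\eps}$ is $\eps$ times a Riemannian area. Writing the conormal in Euclidean coordinates, the integrand $x\mapsto|M_\eps(x)^{\top}\nu_{\mathrm{eucl}}|$ is smooth and elliptic with ellipticity constant depending on $\eps$ and on $\mathcal O$.

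\emph{Almost-minimality.} For $G$ differing from $F$ inside a Euclidean ball $B_s(x)\subset\mathcal O$, \eqref{eq:setcomparison} gives
\[
 P_\eps(F;B_s)\le P_\eps(G;B_s)+C|B_s|\le P_\eps(G;B_s)+C's^{3}\cdot s.
\]
This is an $(\Lambda,r_0)$-almost-minimality condition with gain $s^{n-1+1}$, that is, exponent $1$ in the excess decay. The regularity theorem for almost-minimizers of smooth uniformly elliptic parametric integrands (Almgren, Schoen--Simon--Almgren, Bombieri, Duzaar--Steffen; see also Maggi's book for the constant-coefficient case) then applies. In dimension $3$ the singular set of such an almost-minimizer is empty, because the relevant dimension bound is $n-3=0$ for codimension-one elliptic integrands in $\RR^3$. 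By Schoen--Simon--Almgren the singular set has $\mathcal H^{n-3}$ measure zero, hence is empty when $n=3$ after a dimension-reduction/point argument. This yields a $C^{1,\alpha}$ boundary representative, with $\alpha$ depending on the exponent $1/2$ of the gain. The main obstacle is precisely this appeal: one must check that the $x$-dependence of the integrand is smooth and that the volume error $C|F\triangle G|$ is admissible. Both hold, since $C|F\triangle G|\le C\omega s^3\cdot s$ for $G\triangle F\subset B_s$. One must also check that the emptiness of the singular set holds for anisotropic integrands in three ambient dimensions. For that step I would invoke Schoen--Simon--Almgren's theorem directly, which covers exactly elliptic integrands in $\RR^3$ with full regularity.

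\emph{Higher regularity.} Suppose the first variation prescribes smooth curvature, i.e.\ \eqref{eq:killingfirst} holds weakly with $h$ smooth. Writing $\partial F$ locally as a $C^{1,\alpha}$ Euclidean graph $t=\phi(x')$, the Euler--Lagrange equation is a quasilinear divergence-form elliptic equation
\[
 \Div\bigl(D_p\mathcal F(x',\phi,D\phi)\bigr)-D_z\mathcal F(x',\phi,D\phi)=h(x',\phi),
\]
with $\mathcal F$ smooth and $D_p^2\mathcal F$ uniformly positive by the ellipticity above. Schauder theory for divergence-form equations with $C^{0,\alpha}$ coefficients first gives $\phi\in C^{2,\alpha}$, after difference quotients. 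Bootstrapping then gives $\phi\in C^{k,\alpha}$ for all $k$, so the boundary is smooth.
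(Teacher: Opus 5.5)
Your proposal is correct and follows the paper's route. You identify $P_\eps=\eps\operatorname{Area}_{g_\eps}$ as a smooth elliptic integrand and read \eqref{eq:setcomparison} as almost-minimality. You then invoke a regularity theorem for almost-minimizers whose singular set is empty in dimension three: you cite Schoen--Simon--Almgren with Bombieri, where the paper cites Simmons' theorem for quadratic Hölder-coefficient integrands. Finally you bootstrap the graph Euler--Lagrange equation by Schauder theory, reaching $C^2$ by difference quotients where the paper quotes Rosales' Lemma~2.4.

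One slip needs fixing. A Euclidean ball in $\RR^3$ has volume $\omega_3s^3$, not $O(s^4)$, so the volume error is $C\omega_3s^{2+1}$. This is still exactly the standard $\Lambda$-minimality gain (the paper's Simmons condition with exponent $1/2$), so your conclusion is unaffected. You should also fix the normalized representative with $\partial F=\supp|D\mathbf1_F|$ before reading off a $C^{1,\alpha}$ boundary.
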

\begin{proof}
The reduced-boundary form of \eqref{eq:ellipticperimeter} is
\begin{equation}\label{eq:riemannianperimeter}
 P_\eps(F)=\eps\operatorname{Area}_{g_\eps}(\partial^*F)
         =\eps\int_{\partial^*F}d\sigma_{g_\eps}.
\end{equation}
For fixed $\eps>0$, $g_\eps$ is smooth and uniformly positive definite on a bounded neighbourhood of $\overline{\mathcal O}$. In coordinates, the normal integrand for $P_\eps$ is the square root of a quadratic form whose coefficient matrix is $\eps^2$ times the cofactor matrix of $g_\eps$; extend this matrix smoothly to the identity outside this neighbourhood. For competitors with $F\triangle G\Subset\mathcal O\cap B_r(x)$, locality and \eqref{eq:setcomparison} give an error at most $C|F\triangle G|\le C\omega_3r^3\le C\omega_3r^{5/2}$ for $r<1$. This is Simmons' almost-minimality condition with exponent $1/2$ \cite[Definition~2.2]{Simmons}.

Choose a representative with $\partial F=\supp|D\mathbf1_F|$. The two-phase density bounds \cite[Proposition~4.5]{Simmons} make the density-one representative open with the same boundary. Theorem~1.1 of \cite{Simmons} gives $C^{1,1/8}$ regularity and no singular points in dimension three.

In a rotated graph chart write $z=w(y)$, and let $\Psi(y,z,p)$ be the smooth graph integrand for $P_\eps$. Positive definiteness of $g_\eps$ makes $\Psi_{pp}$ positive definite, uniformly on compact jet sets. For $F$ below the graph, stationarity with prescribed smooth curvature $h$ gives
\[
 \operatorname{div}_y\Psi_p(y,w,Dw)
       =\Psi_z(y,w,Dw)-h(y,w).
\]
The sign of $h$ is reversed for $F$ above the graph. Rosales' interior estimate gives $w\in C^2$ \cite[Lemma~2.4, pp.~610--611]{Rosales}. Interior Schauder estimates \cite[Chapter~6]{GilbargTrudinger} then give $w\in C^{2,1/8}$; differentiating the uniformly elliptic equation and iterating yields smoothness.
\end{proof}

\begin{proof}[Proof of Proposition~\ref{prop:stableapprox}]
We construct the approximations by minimizing \eqref{eq:penalty}. To force convergence to $E$, we first construct smooth comparison regions with energy small relative to the penalty coefficient. Choose a nonnegative $\varphi\in C_c^\infty(B_1(0))$ with $\int_{\mathbb R^3}\varphi\,d\mathcal L^3=1$, where $B_1(0)$ is the Euclidean unit ball. Extend $\mathbf1_E$ by zero and define its Euclidean mollifications by
\[
 v_l(\xi)=\int_{\mathbb R^3}l^3\varphi\bigl(l(\xi-\zeta)\bigr)
                  \mathbf1_E(\zeta)\,d\mathcal L^3(\zeta).
\]
Since $\overline E\Subset\mathcal O$, after discarding finitely many terms, $v_l\in C_c^\infty(\mathcal O)$, $0\le v_l\le1$, and all supports lie in a fixed compact subset of $\mathcal O$. Joint horizontal BV approximation \cite[Theorem~2.4 and its proof, pp.~495--496]{AGM} gives
\[
 \|v_l-\mathbf1_E\|_{L^1}\longrightarrow0,\qquad
 \int_{\mathcal O}\sqrt{(Xv_l)^2+(Yv_l)^2}\,d\mathcal L^3
       \longrightarrow P.
\]
Choose $0<a_l<1/2$ with $a_l\downarrow0$ and $\|v_l-\mathbf1_E\|_{L^1}/a_l\to0$. The weighted coarea formula gives
\[
 \int_0^1 P_H(\{v_l>t\})\,dt
   =\int_{\mathcal O}\sqrt{(Xv_l)^2+(Yv_l)^2}\,d\mathcal L^3.
\]
By averaging over $(a_l,1-a_l)$ and using Sard's theorem, choose a regular value $t_l$ in this interval such that $G_l:=\{v_l>t_l\}$ satisfies
\[
 P_H(G_l)\le\frac1{1-2a_l}
       \int_{\mathcal O}\sqrt{(Xv_l)^2+(Yv_l)^2}\,d\mathcal L^3,
 \qquad
 |G_l\triangle E|\le\frac{\|v_l-\mathbf1_E\|_{L^1}}{a_l}.
\]
Thus $G_l$ has smooth boundary and common compact support in $\mathcal O$, and $G_l\to E$ in $L^1$. The first bound gives $\limsup_lP_H(G_l)\le P$, while lower semicontinuity gives the reverse inequality for the lower limit. Hence $P_H(G_l)\to P$. Each $G_l$ also has finite $P_1$.

We choose the parameters so that the comparison energy is at most $\lambda_j^2$. The lower bound \eqref{eq:penaltylower} will then give $\mathscr P_E(F_j)\le\lambda_j$ for a minimizer. The preceding convergences and the boundedness of $\psi_E$ give $\mathcal J_{0,\lambda}(G_l)\to0$ for each fixed $\lambda>0$. For $\lambda_j\downarrow0$ with $\lambda_j\le1$, select a subsequence $G_j$ with $\mathcal J_{0,\lambda_j}(G_j)\le\lambda_j^2/2$. Choose $0<\eps_j\le\lambda_j$ with $\eps_jP_1(G_j)\le\lambda_j^2/2$ and $\eps_j\le\eps_{j-1}/2$. Since $P_{\eps_j}(G_j)\le P_H(G_j)+\eps_jP_1(G_j)$, one has $\mathcal J_{\eps_j,\lambda_j}(G_j)\le\lambda_j^2$.

Minimize $\mathcal J_{\eps_j,\lambda_j}$ at each fixed $j$. Bounded energy bounds $P_1$, since the volume terms are bounded and continuous on $[0,|\mathcal O|]$ and $P_{\eps_j}\ge\eps_jP_1$. BV compactness and lower semicontinuity \cite[Theorem~3.1 and Proposition~3.1]{Simmons}, together with continuity of the penalties, give a minimizer $F_j\subset\mathcal O$; zero extension and $|\partial\mathcal O|=0$ preserve admissibility. Minimality and \eqref{eq:penaltylower} give
\[
 0\le P_H(F_j)-I(|F_j|)+\lambda_j\mathscr P_E(F_j)
 \le\mathcal J_{\eps_j,\lambda_j}(F_j)\le\lambda_j^2.
\]
In particular $\mathscr P_E(F_j)\le\lambda_j$. Equation~\eqref{eq:pinL1}, first with $j\to\infty$ and then $a\downarrow0$, gives $F_j\to E$ in $L^1$. Hence $w_j:=|F_j|\to V$, and $w_j\ge V/2$ for all sufficiently large $j$. Also
\[
 P_{\eps_j}(F_j)\le I(w_j)+\lambda_j^2\le I(|\mathcal O|)+1.
\]

To obtain common compact support inside $\mathcal O$ from $F_j\to E$, we verify \eqref{eq:setcomparison} with a constant independent of $j$ and apply Lemma~\ref{lem:selectionconfinement}. Interior regularity will then apply on this fixed region. On $[V/4,|\mathcal O|]$, the profile $I$ is Lipschitz, and $\mathscr P_E$ changes by at most $|F_j\triangle G|$. Minimality therefore gives \eqref{eq:setcomparison} for $F_j$ whenever $|G|\ge V/4$, with a constant independent of $j$. If $|G|<V/4$, then $|F_j\triangle G|\ge V/4$, so the perimeter bound gives the same comparison after increasing the constant. Since $|F_j\triangle E|\to0$, Lemma~\ref{lem:selectionconfinement} places all sufficiently late $F_j$ compactly in a fixed ball $\mathcal B\Subset\mathcal O$.

Lemma~\ref{lem:fixedregularity} gives a $C^{1,1/8}$ representative. The unconstrained first variation of \eqref{eq:penalty} prescribes smooth curvature
\[
 H_{\eps_j}=2+r_j,\qquad
 r_j=I'(w_j)-2+\lambda_j\psi_E,
\]
and the same lemma then gives smoothness. Since $I'(V)=2$,
\[
 \|r_j\|_{C^1(\mathcal U)}\le C(|F_j\triangle E|+\lambda_j)\longrightarrow0.
\]

The unconstrained minimum also supplies the full second-variation inequality. With $r_j$ fixed, differentiating \eqref{eq:penalty} twice at $F_j$ gives \eqref{eq:fullstability} for every smooth normal speed, with $c_j=-I''(w_j)>0$. Proposition~\ref{prop:prescribedestimate} controls the remaining geometric errors once these coefficients are compared with the critical value for $P_j=P_H(F_j)$.

The energy bound gives $0\le P_H(F_j)-I(w_j)\le\lambda_j^2$, and hence $P_j\to P$. Since $-I''(V)=1/(2V)=4/(3P)$,
\[
 c_j\longrightarrow\frac1{2V},\qquad
 \left(c_jP_j-\frac43\right)_+\longrightarrow0.
\]
Proposition~\ref{prop:prescribedestimate}, applied with $c=c_j$, gives $\EE_j+\NN_j\to0$, proving \eqref{eq:stableapprox}.
\end{proof}

\section{Centre rigidity and sharp isoperimetry}\label{sec:rigidity}
For the approximations of Proposition~\ref{prop:stableapprox}, the estimates of Section~\ref{sec:prelim} control the variation of the reconstructed centre along characteristic curves away from characteristic points. The full second variation forces these centres to concentrate at a single point, and Proposition~\ref{lem:stablelimit} identifies the limiting region as a left translate of $\Omega$.

In every limit involving $j$ and a cutoff width $\eta$, $\eta>0$ is fixed while $j\to\infty$, and $\eta\downarrow0$ is taken afterwards. Constants may depend on the fixed cutoff; bounds uniform in $\eta$ are stated explicitly.

\subsection{Angular averages and contact variations}

\begin{lemma}[Angular averages]\label{lem:perimeterlimit}
Let $\Sigma$ be a smooth closed oriented surface whose reconstructed centres $\mathbf a=(z_0,t_0)$ lie in a compact set $K\subset\HH$. Put $P=\sigma_H(\Sigma)$. For $\varphi\in C^\infty(\HH)$ and $f\in C(\bar{\angI})$, set $c_f=(2/\pi)\int_{\angI}\rho^2f\,du$. Then
\begin{equation}\label{eq:angleaverage}
 \left|\int_\Sigma\varphi(\mathbf a)(f(u)-c_f)\,d\sigma_H\right|
 \le C\|\varphi\|_{C^1(K)}\|f\|_\infty\sqrt{P\EE}.
\end{equation}
Here $C$ is absolute and derivatives in the $C^1$ norm are measured using $g_1$. The characteristic set has zero $g_1$-area.
\end{lemma}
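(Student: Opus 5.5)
The plan is to integrate by parts along characteristic curves. On the unit sphere $Zu=-1$, so $u$ runs through $\angI$ at unit speed. The weight $\rho^2=\cos^2u$ is exactly the density of $d\sigma_H$ in the parameter $u$ once the divergence $\Div_\Sigma(\rho Z)=2{N_T}$ is taken into account. The constant $c_f$ is the corresponding weighted mean: $\int_\angI\rho^2\,du=\pi/2$, so
\[
 \int_\angI\rho^2(f-c_f)\,du=0 .
\]

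\textbf{Step 1: the primitive.} I would solve an ODE in $u$ so that $f-c_f$ becomes a divergence up to error terms. For a $C^1$ function $\psi$ on $\bar\angI$, on the noncharacteristic set we have
\[
 \Div_\Sigma\bigl(\varphi(\mathbf a)\psi(u)\rho Z\bigr)
 =\varphi(\mathbf a)\bigl(-\rho\psi'+2\sin u\,\psi\bigr)
 +\varphi(\mathbf a)\rho\psi'(u)e_u
 +\psi\rho\,d\varphi(Z\mathbf a).
\]
This uses $Zu=-1+e_u$ and $\Div_\Sigma(\rho Z)=2{N_T}$ from the proof of Lemma~\ref{lem:areacontrol}. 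I require
\[
 -\rho\psi'+2\sin u\,\psi=\rho(f-c_f),
 \qquad\text{equivalently}\qquad
 (\rho^2\psi)'=-\rho^2(f-c_f).
\]
So I take
\[
 \psi(u)=-\rho^{-2}\int_{-\pi/2}^u\cos^2s\,\bigl(f(s)-c_f\bigr)\,ds .
\]
The cancellation $\int_\angI\rho^2(f-c_f)\,du=0$ makes the integral $O(\rho^3\|f\|_\infty)$ near both endpoints. Hence $|\psi|\le C\|f\|_\infty\rho$, and $\psi'=2\tan u\,\psi-(f-c_f)$ is bounded by $C\|f\|_\infty$, with $C$ absolute.

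\textbf{Step 2: integrate and bound the errors.} The left side of the displayed identity, multiplied by $d\sigma_{g_1}$, becomes $\varphi(\mathbf a)(f-c_f)\,d\sigma_H$ from the main term. Its integral should vanish by the divergence theorem, so the quantity in \eqref{eq:angleaverage} equals minus the integrals of the two error terms.

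For the first error term, Cauchy--Schwarz and $e_u^2\le e$ give
\[
 \Bigl|\int_\Sigma\varphi(\mathbf a)\rho\psi'e_u\,d\sigma_{g_1}\Bigr|
 \le C\|\varphi\|_{C^0(K)}\|f\|_\infty\int_\Sigma|e_u|\,d\sigma_H
 \le C\|\varphi\|_{C^0(K)}\|f\|_\infty\sqrt{P\EE}.
\]
For the second, \eqref{eq:centreDZ} gives the horizontal part $Zz_0$ and the vertical part $\rho^2e_H$ of $Z\mathbf a$ in the left-invariant frame, so $|d\varphi(Z\mathbf a)|\le\|\varphi\|_{C^1(K)}(|Zz_0|+\rho^2|e_H|)$. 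Together with $|\psi|\rho\le C\|f\|_\infty\rho^2$, the integrand is at most
\[
 C\|\varphi\|_{C^1(K)}\|f\|_\infty\,\rho\,(|Zz_0|+\rho|e_H|)\,d\sigma_H .
\]
The definition of $e$ and \eqref{eq:coercivity} give $|Zz_0|^2\le e$ and $\rho^2e_H^2\le4e$. Cauchy--Schwarz against $d\sigma_H$ then yields again $C\|\varphi\|_{C^1(K)}\|f\|_\infty\sqrt{P\EE}$.

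\textbf{Step 3: justify the divergence theorem (main obstacle).} The field $\varphi(\mathbf a)\psi(u)\rho Z$ is $O(\rho^2)$, but $\mathbf a$ need not be differentiable uniformly up to characteristic points. So I would not try to extend the field in $C^1$ as in Lemma~\ref{lem:globalflux}. Instead I would insert a cutoff $\chi_\eta(\rho)$ that vanishes for $\rho<\eta$ and equals $1$ for $\rho>2\eta$, with $|\chi_\eta'|\le C/\eta$. The divergence theorem then applies on the smooth region $\{\rho>\eta/2\}$.

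The extra term is $\chi_\eta'(\rho)Z\rho\cdot\varphi(\mathbf a)\psi\rho$, where $Z\rho=-{N_T}(e_u-1)$ is bounded on the fixed surface by Lemma~\ref{lem:globalflux}. Since $\psi\rho=O(\rho^2)$, this term is $O(\eta)$ pointwise on the fixed surface, and its integral tends to zero as $\eta\to0$. The remaining integrals converge by dominated convergence, since $\sigma_{g_1}\{\rho=0\}=0$ and the integrands are integrable for fixed $\Sigma$. None of the constants from Step 2 depends on $\eta$ or on $\Sigma$, so the bound is uniform with an absolute $C$.
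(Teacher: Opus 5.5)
Your proposal is correct and is essentially the paper's proof: your $\psi$ is the paper's $-\chi$, and the divergence identity for $\rho\varphi(\mathbf a)\psi(u)Z$ together with the Cauchy--Schwarz bounds via $e_u^2\le e$, $|Zz_0|^2\le e$ and $\rho^2e_H^2\le 4e$ is exactly what the paper uses. The differences are cosmetic. You cut off in $\rho$ and dispose of the cutoff term with the fixed-surface bound on $Z\rho$, whereas the paper cuts $\chi$ off in $u$ near the endpoints and uses dominated convergence. You also handle continuous $f$ directly, since $\psi\in C^1$, instead of approximating by smooth $f$. Finally, your cut-off field vanishes on $\{\rho<\eta\}$, so you can apply the divergence theorem on all of $\Sigma$ without needing $\{\rho>\eta/2\}$ to be a smooth region.
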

\begin{proof}
Equations \eqref{eq:e}, \eqref{eq:coercivity} and \eqref{eq:centreDZ} give, on the noncharacteristic set,
\[
 |Z(\varphi(\mathbf a))|
 \le C\|\varphi\|_{C^1(K)}(|Zz_0|+\rho^2|e_H|)
 \le C\|\varphi\|_{C^1(K)}\sqrt e.
\]
For smooth $f$, put
\[
 \chi(u)=\rho(u)^{-2}
       \int_{-\pi/2}^{u}\rho(v)^2(f(v)-c_f)\,dv.
\]
The numerator has zero total integral. Writing $d$ for the distance to the nearer endpoint gives $|\chi(u)|\le d\|f-c_f\|_\infty$ and $|2\tan u\,\chi(u)|\le2\|f-c_f\|_\infty$. Hence $\chi=O(\rho)$ at both endpoints and
\[
 \chi'-2\tan u\,\chi=f-c_f,\qquad
 \|\chi'\|_\infty\le6\|f\|_\infty,\qquad
 \|\chi\|_\infty\le\pi\|f\|_\infty.
\]
Cut $\chi$ off within distance $\tau$ of the endpoints and integrate the divergence of $\rho\varphi(\mathbf a)\chi(u)Z$. The cut-off fields are smooth and vanish near the characteristic set. The additional cutoff derivatives are uniformly bounded because $\chi=O(\rho)$; the divergence terms, written with respect to $\sigma_H$, are dominated by a constant times $1+\sqrt e$. Thus dominated convergence as $\tau\downarrow0$, using $Zu=-1+e_u$ and $\Div_\Sigma(\rho Z)=2N_T$, gives
\[
 \int_\Sigma\varphi(\mathbf a)(f(u)-c_f)\,d\sigma_H
 =\int_\Sigma[\varphi(\mathbf a)e_u\chi'(u)
        +\chi(u)Z(\varphi(\mathbf a))]\,d\sigma_H.
\]
Cauchy--Schwarz proves \eqref{eq:angleaverage}. Uniform approximation extends the estimate to continuous $f$.
\end{proof}

For a smooth real function $h$ on $\HH$, the contact Hamiltonian construction \cite[Proposition~5.3, p.~227]{Ovsienko} gives, in our normalization,
\begin{equation}\label{eq:contactfield}
 V_h=-\tfrac12(Yh)X+\tfrac12(Xh)Y+hT,
 \qquad \mathcal L_{V_h}\Theta=(Th)\Theta.
\end{equation}
The second identity follows from $d\Theta=2\,dx\wedge dy$ and Cartan's formula. The flow preserves the horizontal distribution, which yields the $\eps$-uniform estimate in Lemma~\ref{lem:contactbound}.

\begin{lemma}[Uniform second-variation bound under contact flows]\label{lem:contactbound}
Fix a compact set $K\Subset\mathcal U$ and $h\in C_c^\infty(\HH)$. Let $0<\eps\le1$ and let $F\subset K$ be a smooth region stationary for $P_\eps-\int_F(2+r)\,d\mathcal L^3$, where $\|r\|_{C^1(\mathcal U)}\le1$. For $v=\langle V_h,N\rangle$ on $\Sigma=\partial F$, the Hessian in Lemma~\ref{lem:killinghessian} satisfies
\begin{equation}\label{eq:contactbound}
 |\mathcal B_{2+r}(v,v)|\le C_h\operatorname{Area}_{g_1}(\Sigma).
\end{equation}
The constant is independent of $\eps$ and $F$.
\end{lemma}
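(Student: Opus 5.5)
The plan is to recognise $\mathcal B_{2+r}(v,v)$ as the second derivative of the functional along the contact flow, and then to estimate that derivative directly. The estimate uses only two facts: the flow preserves the horizontal distribution, and it rescales $\Theta$ by a positive factor. Neither fact involves $\eps$.

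\emph{Step 1: reduction to a second derivative along the flow.} Let $\phi_\tau$ be the flow of $V_h$ and put $F_\tau=\phi_\tau(F)$ and
\[
 \mathcal J(G)=P_\eps(G)-\int_G(2+r)\,d\mathcal L^3 .
\]
The deformation $F_\tau$ has initial normal speed $v=\langle V_h,N\rangle$. It differs from the normal-geodesic variation of Lemma~\ref{lem:killinghessian} by a tangential reparametrisation and a normal acceleration term. For fixed $\eps$ the surface is smooth, so the standard computation applies: the second derivative along any deformation equals the Hessian on the normal speed plus the first variation evaluated on the normal component of the acceleration. The latter is $\int_\Sigma(H_\eps-2-r)\,\cdot\,d\sigma_{g_1}=0$ by \eqref{eq:killingfirst}. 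Hence
\[
 \mathcal B_{2+r}(v,v)=\frac{d^2}{d\tau^2}\Big|_{\tau=0}\mathcal J(F_\tau),
\]
and it suffices to bound the two pieces of this derivative separately.

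\emph{Step 2: the perimeter term, uniformly in $\eps$.} Write the conormal covector of $\Sigma$ in the coframe dual to $(X,Y,T)$ as $\eta=aX^*+bY^*+c\,\Theta$, so that $|\eta|=1$ in $g_1$. The integrand of $P_\eps$ is then $\sqrt{a^2+b^2+\eps^2c^2}$ times the area element.

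Pull back by $\phi_\tau$. Since $\phi_\tau^*\Theta=\lambda_\tau\Theta$ with $\lambda_\tau>0$ bounded above and below, the new horizontal components $(a',b')=M_\tau(a,b)$ depend only on $(a,b)$. The vertical component is $\ell_\tau=\gamma_\tau\cdot(a,b)+\lambda_\tau c$. All of $M_\tau,\gamma_\tau,\lambda_\tau$ and their $\tau$-derivatives are bounded by $C_h$ on $K$. Thus the pulled-back density is $\sqrt{Q_\tau}$ times a smooth Jacobian factor, where
\[
 Q_\tau=|M_\tau(a,b)|^2+\eps^2\ell_\tau^2 .
\]

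Expanding $\tfrac{d^2}{d\tau^2}\sqrt{Q}=Q''/(2\sqrt Q)-Q'^2/(4Q^{3/2})$, I will show that $|Q'|\le C\sqrt Q$ and $|Q''|\le C\sqrt Q$ (using $|\eta|=1$).
\begin{itemize}
\item For $Q'$: use $|\ell'|\le C|\eta|$, $\eps\le1$ and $|(a,b)|+\eps|\ell|\le C\sqrt Q$.
\item For $Q''$: the only delicate term is $\eps^2\ell'^2$. Since $\lambda_\tau$ is bounded below, $|c|\le C(|\ell|+|(a,b)|)$. Hence $\eps^2\ell'^2\le C(|(a,b)|^2+\eps^2\ell^2)\le CQ$.
\end{itemize}
The second derivative of the perimeter density is therefore bounded by $C_h$ times the $g_1$-area density, so the perimeter contributes at most $C_h\operatorname{Area}_{g_1}(\Sigma)$.

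\emph{Step 3: the potential term, using only one derivative of $r$.} Because $r$ is only $C^1$, I will not differentiate the volume integral twice in the interior. Instead I take one derivative first:
\[
 \frac{d}{d\tau}\int_{F_\tau}(2+r)=\int_\Sigma\phi_\tau^*\bigl((2+r)\,\iota_{V_h}d\mathcal L^3\bigr).
\]
Differentiating once more produces $V_h(r)$, which is bounded by $\|r\|_{C^1}\le1$. It also produces the $\tau$-derivative of the pulled-back flux form $\phi_\tau^*\iota_{V_h}d\mathcal L^3$, which is bounded by $C_h$ times the area density. Hence this term also contributes at most $C_h\operatorname{Area}_{g_1}(\Sigma)$, which proves \eqref{eq:contactbound}.

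The main obstacle is Step 2: ruling out $\eps$-dependent blow-up in the $\Theta$-component, specifically the term $\eps^2\ell'^2/\sqrt Q$. This is exactly where preservation of the contact structure is used, and the bound of Step 2 is the point to verify carefully. A secondary point is justifying the identification in Step 1 rigorously, since $V_h$ is not normal and the characteristic set is present. At fixed $\eps$, however, every quantity involved is smooth, so the usual second-variation bookkeeping for smooth compact hypersurfaces applies.
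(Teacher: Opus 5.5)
Your route is the same as the paper's. Stationarity identifies $\mathcal B_{2+r}(v,v)$ with the second derivative of the functional along the flow of $V_h$. The perimeter term is controlled through the block-triangular form of the transformed conormal forced by $\mathcal L_{V_h}\Theta=(Th)\Theta$; this is the paper's matrix $C_{\eps,s}=Q_\eps\operatorname{cof}(M_s)Q_\eps^{-1}$. The volume term is handled by transport, which gives $\int_\Sigma\operatorname{div}((2+r)V_h)\,v\,d\sigma_{g_1}$ and uses only one derivative of $r$. Steps 1 and 3 are fine as written.

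\textbf{The gap is in Step 2, exactly where uniformity in $\eps$ is decided.} The bound $|Q'|\le C\sqrt Q$ you propose does not control the term $Q'^2/(4Q^{3/2})$; it yields only $C/\sqrt Q$. At a characteristic point, where $(a,b)=0$ and $|c|=1$, one has $\sqrt Q\approx\eps$, so this is only $O(\eps^{-1})$. The loss comes from using $|\ell'|\le C|\eta|$, which discards a factor of $\eps$ in the cross term $2\eps^2\ell\ell'$.

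\textbf{The fix.} You need $|Q'|\le CQ$, and it follows from the observation you already use for $Q''$. Since $|c|\le C(|\ell|+|(a,b)|)$, one has $\eps|\ell'|\le C(|(a,b)|+\eps|\ell|)\le C\sqrt Q$. Hence $|2\eps^2\ell\ell'|=2(\eps|\ell|)(\eps|\ell'|)\le CQ$, while the horizontal part of $Q'$ is $O(|(a,b)|^2)$.

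The cleanest way to see all of this at once, and the way the paper organises it, is to put $w=(a,b,\eps c)$, so that $|w|=\Phi_\eps$, and write
\[
 \sqrt{Q_\tau}=|C_\tau w|,\qquad
 C_\tau=\begin{pmatrix}M_\tau&0\\ \eps\gamma_\tau&\lambda_\tau\end{pmatrix}.
\]
Here $C_\tau$, its inverse and its $\tau$-derivatives are bounded uniformly for $0<\eps\le1$. Then $|C_\tau w|$ is homogeneous of degree one in $w$ and comparable to $|w|$. So each of its first two $\tau$-derivatives is $O(|w|)=O(\Phi_\eps)\le C_h$, which gives the perimeter bound directly.

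Alternatively, if you keep the factor $\eps$ in your own estimate, $|Q'|\le C(Q+\eps\sqrt Q)$ together with $\sqrt Q\ge\eps$ at $\tau=0$ also suffices. Either way, the delicate term is not only $\eps^2\ell'^2/\sqrt Q$, which you handle correctly. The squared first-derivative term is where your stated estimate actually fails.
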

\begin{proof}
Let $\Psi_s$ be the flow of $V_h$. In the oriented orthonormal frames $(X,Y,T)$ at the source and target, write $M_s$ for its differential. Since $\Psi_s$ preserves the horizontal distribution, $M_s$ is upper block triangular. Put $Q_\eps=\operatorname{diag}(1,1,\eps)$ and $\operatorname{cof}M=(\det M)M^{-T}$. Then
\[
 C_{\eps,s}:=Q_\eps\operatorname{cof}(M_s)Q_\eps^{-1}
 =\begin{pmatrix}*&0\\\eps *&*\end{pmatrix},
\]
where each asterisk denotes a smooth block independent of $\eps$. For $p$ in a fixed neighbourhood of $K$ and sufficiently small $s$, this matrix, its inverse, and its first two $s$-derivatives are uniformly bounded for $0<\eps\le1$. The area transformation formula gives
\[
 P_\eps(\Psi_sF)
 =\int_\Sigma
       |Q_\eps\operatorname{cof}(M_s)N|\,d\sigma_{g_1}
 =\int_\Sigma|C_{\eps,s}Q_\eps N|\,d\sigma_{g_1}.
\]
The matrix bounds imply
\[
 \left|\frac{d^2}{ds^2}|C_{\eps,s}Q_\eps N|\right|
 \le C_h|Q_\eps N|=C_h\Phi_\eps.
\]
Transport of the volume integral gives
\[
 \left.\frac{d^2}{ds^2}\right|_{s=0}
       \int_{\Psi_sF}(2+r)\,d\mathcal L^3
 =\int_\Sigma\operatorname{div}((2+r)V_h)\,v\,d\sigma_{g_1}.
\]
Since $\operatorname{div}((2+r)V_h)=V_hr+(2+r)\operatorname{div}V_h$, the $C^1$ bound on $r$ bounds this term by $C_h\operatorname{Area}_{g_1}(\Sigma)$. The perimeter term has the same bound because $\Phi_\eps\le1$. Stationarity identifies the second derivative along $\Psi_s$ with the Hessian on its normal speed, proving \eqref{eq:contactbound}.
\end{proof}

\subsection{Zero mode and endpoint pairings}

The function $\rho(u)=\cos u$ vanishes at the endpoints of $\angI$ and satisfies $\rho''+\rho=0$. It is therefore a zero mode of the one-dimensional form
\[
 \int_{\angI}(k'^2-k^2)\,du,\qquad k\in H_0^1(\angI).
\]
Cauchy--Schwarz gives $\mathcal Q(w,w)=0\Rightarrow\mathcal Q(w,v)=0$ for every $v$ whenever $\mathcal Q$ is a nonnegative symmetric bilinear form. The cutoff construction below applies the approximate form of this implication.

At the angular endpoints of the approximations, the normal speed of the contact variation generated by $h$ approaches $h(\mathbf a^+)$ and $-h(\mathbf a^-)$, where $\mathbf a^\pm=(x_0,y_0,t_0\pm\pi/4)$. Lemma~\ref{lem:endpointpairings} makes this endpoint relation precise.

For speeds $k(u_j)\varphi_j(\mathbf a_j)$, the characteristic derivatives give the one-dimensional form above in the limit, and the mixed pairing with a contact speed contains the corresponding operator $-k''-k$. Cutoffs of $\rho$ have limiting energy $O(\eta)$ when the centre test has zero horizontal-perimeter average; this zero average also makes the volume term tend to zero for each fixed cutoff. Cauchy--Schwarz for the full second variation then bounds the mixed pairings by $O(\sqrt\eta)$.

For these cutoffs, $k''+k$ is supported near the two angle endpoints. The resulting difference $h(\mathbf a_j^+)-h(\mathbf a_j^-)$ turns the pairings into coordinate variances when $h$ agrees with $xt$, $yt$ or $t^2$ near the endpoints. Thus the zero mode and endpoint values together force the reconstructed centres to concentrate.

Let $\eps_j\downarrow0$ and let $F_j$ be smooth regions in a common compact set $K\Subset\mathcal U$, stationary for
\[
 P_{\eps_j}(G)-\int_G(2+r_j)\,d\mathcal L^3,
 \qquad \|r_j\|_{C^1(\mathcal U)}\longrightarrow0.
\]
Suppose $\sup_jP_H(F_j)<\infty$ and $\EE_j+\NN_{\eps_j}(F_j)\to0$.
Write $\Sigma_j=\partial F_j$, let $N_j$ be its outward $g_1$-unit normal, and put $P_j=P_H(F_j)$ and $\mathcal B_j=\mathcal B_{2+r_j}$.
Choose a compact Euclidean ball $K_0$ containing all reconstructed centres, and let $\varphi_j\in C^\infty(\HH)$ satisfy $\sup_j\|\varphi_j\|_{C^1(K_0)}<\infty$.

\begin{lemma}[Second-variation pairings]\label{lem:localpairings}
For $F_j$ and $\varphi_j$ as above, fix $h\in C_c^\infty(\HH)$ and $k\in C_c^\infty(\angI)$, and put
\[
 w_j=k(u_j)\varphi_j(\mathbf a_j),\qquad v_j=\langle V_h,N_j\rangle,
\]
extending $w_j$ by zero near the characteristic set. Then, as $j\to\infty$,
\begin{equation}\label{eq:mixedcontact}
 \mathcal B_j(w_j,v_j)
 =-\int_{\Sigma_j}\rho_j^{-1}(k''+k)(u_j)
          \varphi_j(\mathbf a_j)v_j\,d\sigma_{g_1}+o_j(1).
\end{equation}
\begin{equation}\label{eq:localformlimit}
 \mathcal B_j(w_j,w_j)
 =\frac2\pi\left(\int_{\Sigma_j}\varphi_j(\mathbf a_j)^2\,d\sigma_{H,j}\right)
       \int_{\angI}(k'^2-k^2)\,du+o_j(1).
\end{equation}
\end{lemma}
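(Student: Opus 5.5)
We work with the frame form \eqref{eq:hessian} of $\mathcal B_\eps$, together with the potential correction $-\int(Nh)w_1w_2$, where $h=2+r_j$. Since $k\in C_c^\infty(\angI)$, the speed $w_j$ is supported in $\{\rho_j\ge\vartheta\}$ for some $\vartheta=\vartheta(k)>0$. There the frame formulas are valid, $\Phi_{\eps_j}=\rho_j+O(\eps_j^2)$, and $m_j\to1$ uniformly. Coercivity \eqref{eq:coercivity} gives $\|e_H\|_{L^2}+\|e_u\|_{L^2}\to0$ on this support, and $\NN_{\eps_j}\to0$ gives $\eps_j\|\kappa\|_{L^2}\to0$ there. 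The potential term $\int(Nr_j)w_1w_2$ is $o(1)$ because $\|r_j\|_{C^1}\to0$ and the areas are bounded (Lemma~\ref{lem:areacontrol} with bounded $P_H$ and $\EE$).

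\textbf{Derivatives of $w_j$.} By \eqref{eq:centreDZ}, $Zw_j=k'(u)(-1+e_u)\varphi(\mathbf a)+k(u)O(\sqrt e)$, so $Zw_j=-k'(u)\varphi(\mathbf a)+O(|e_u|+\sqrt e)$. Along $U$, \eqref{eq:centreDU} gives $Uw_j=k'\kappa\varphi+k\,O(|e_u|/\rho+|\kappa-\rho|)$. The transverse term $b_\eps(Uw_j)^2$ is of order $\eps^2\kappa^2+\eps^2$ on the support, hence $o(1)$ by $\NN_\eps\to0$. For the same reason the $b_\eps(\kappa^2+(e_u-1)^2)$ term is $o(1)$.

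\textbf{Proof of \eqref{eq:localformlimit}.} The potential is $q_N/\Phi=\rho^{-1}(\rho^2(H^2-4)+(e_u+1)^2)$. Since $H^2-4=e_H(e_H+4)$, we get $q_N=1+O(|e_H|+|e_u|+e)$. Expanding the product, every cross term is bounded by $C\sqrt{P\EE}+C\EE\to0$ via Cauchy--Schwarz. Hence
\[
\mathcal B_j(w_j,w_j)=\int_{\Sigma_j}\rho^{-2}\bigl(k'(u)^2-k(u)^2\bigr)\varphi(\mathbf a)^2\,d\sigma_H+o(1).
\]
The weight $\rho^{-2}$ appears because $d\sigma_{g_1}=\rho^{-1}d\sigma_H$ and $\Phi^{-1}\approx\rho^{-1}$. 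Now apply Lemma~\ref{lem:perimeterlimit} with $f=\rho^{-2}(k'^2-k^2)$, which is continuous on $\bar\angI$ since $k$ is compactly supported, and with test function $\varphi_j^2$. This yields $c_f=(2/\pi)\int_{\angI}(k'^2-k^2)\,du$, up to an error $C\sqrt{P\EE}\to0$, which is exactly the claimed formula.

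\textbf{Proof of \eqref{eq:mixedcontact}.} Write
\[
\mathcal B_j(w,v)=\int\bigl[\Phi^{-1}Zw\,Zv+b_\eps Uw\,Uv-(\cdots)wv\bigr]d\sigma_{g_1}.
\]
The plan is to integrate by parts in $Z$ to move the derivative off $v$, which is the only factor with no uniform control. Using $\Div_\Sigma(\rho Z)=2N_T$, the integral $\int\rho^{-1}Zw\,Zv\,d\sigma_{g_1}$ equals $-\int Z(\rho^{-1}Zw)\,v-\int\rho^{-1}Zw\,v\,\Div_\Sigma Z$. With $Zu\approx-1$, the derivative $Z(\rho^{-1}Zw)$ produces $\rho^{-1}k''\varphi$ plus derivative terms of $\rho$ and error terms. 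The error terms involve $Ze_u$ and $Z(\varphi(\mathbf a))$; these are handled by not differentiating them, that is, by keeping them as products of $L^2$-small quantities against $v$ and $Zv$. Here $|v|,|Zv|\le C_h$ uniformly, since $V_h$ is a fixed smooth field and $N$, $Z$ are unit. Combining the $Z\rho$ terms with the potential $q_N/\rho\approx\rho^{-1}$ and the curvature of $Z$ should reproduce $-\rho^{-1}(k''+k)\varphi v$.

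The main obstacle is this bookkeeping. One must check that the model-sphere identities (for instance $Z\rho=-N_T(e_u-1)$ and $\Div_\Sigma Z=$ the corresponding expression in $N_T$ and $e_u$) combine exactly to cancel the first-order $k'$ terms. Every residual must carry a factor $e_u$, $e_H$, $\sqrt e$ or $\eps\kappa$, with $v$ and $Zv$ bounded. Alternatively, one can avoid differentiating $v$ twice by using that $Zw$ is explicit, and integrate by parts only the leading $-k'\varphi$ part.
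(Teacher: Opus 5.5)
Your plan follows the paper's proof. On $\{\rho_j\ge\vartheta\}$ the frame Hessian reduces to $-\int_{\Sigma_j}\rho_j^{-1}\varphi_j(\mathbf a_j)[k'(u_j)Z_jv_j+k(u_j)v_j]\,d\sigma_{g_1}+o_j(1)$, and only the leading $-k'(u_j)\varphi_j(\mathbf a_j)$ part of $Z_jw_j$ is integrated by parts (your ``alternative''). The self-pairing is then evaluated by Lemma~\ref{lem:perimeterlimit} with $f=\rho^{-2}(k'^2-k^2)$ and test function $\varphi_j^2$.

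The bookkeeping you leave open does close. The identities $\Div_\Sigma(\rho Z)=2N_T$ and $Z\rho=-N_T(e_u-1)$ give $\Div_\Sigma(\rho^{-1}Z)=2N_T\rho^{-2}e_u$. Hence the $k'$ terms coming from $Z(\rho^{-1})$ and $\Div_\Sigma Z$ cancel up to an $O(|e_u|)$ error.

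One claim needs correcting: $|Z_jv_j|\le C_h$ uniformly is false. Indeed $Z_jv_j=\langle\nabla_{Z_j}V_h,N_j\rangle+\langle V_h,AZ_j\rangle$ with $AZ=\rho HZ+e_uU$, and similarly $U_jv_j$ involves $\kappa_j$. However, the bounds $|Z_jv_j|\le C_h(1+|e_{H,j}|+|e_{u,j}|)$ and $|U_jv_j|\le C_h(1+|\kappa_j|+|e_{u,j}|)$ are all that your Cauchy--Schwarz steps require.
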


\begin{proof}
Put $\Phi_j=\Phi_{\eps_j}$ and $\NN_j=\NN_{\eps_j}(F_j)$. The error constants depend on the fixed functions $k,h$, the common compact sets, the common perimeter bound, and a common $C^1(K_0)$ bound for $\varphi_j$; they are independent of $j$ and may increase as $\supp k$ approaches the angle endpoints.

To control the transverse contributions on the fixed angular support of $k$, first note from \eqref{eq:areabound} that
\[
 \operatorname{Area}_{g_1}(\Sigma_j)
 \le\tfrac12(3P_j+\sqrt{P_j\EE_j})\le C.
\]
Fix $\vartheta>0$ such that $\rho\ge\vartheta$ on $\supp k$. On this fixed angle interval, \eqref{eq:coercivity} controls the characteristic errors and $\NN_j$ controls the transverse terms. The chain rule gives
\[
 \begin{aligned}
 Z_jw_j&=k'(u_j)(-1+e_{u,j})\varphi_j(\mathbf a_j)
       +k(u_j)Z_j(\varphi_j(\mathbf a_j)),\\
 U_jw_j&=k'(u_j)\kappa_j\varphi_j(\mathbf a_j)
       +k(u_j)U_j(\varphi_j(\mathbf a_j)),\\
 \xi v_j&=\langle\nabla_\xi V_h,N_j\rangle
       +\langle V_h,A\xi\rangle,
       \qquad \xi\in\{Z_j,U_j\}.
 \end{aligned}
\]
On this angle interval, equations \eqref{eq:centreD} give
$|Z_j(\varphi_j(\mathbf a_j))|\le C(|e_{H,j}|+|e_{u,j}|)$ and
$|U_j(\varphi_j(\mathbf a_j))|\le C_\vartheta(1+|\kappa_j|+|e_{u,j}|)$.
Together with $AZ=\rho H Z+e_uU$ and $AU=e_uZ+\kappa U$, these identities imply
\[
 \begin{aligned}
 Z_jw_j&=-k'(u_j)\varphi_j(\mathbf a_j)
          +O_k(|e_{H,j}|+|e_{u,j}|),\\
 |U_jw_j|+|U_jv_j|&\le C_{k,h}(1+|\kappa_j|+|e_{u,j}|),\\
 |v_j|&\le C_h,\qquad
 |Z_jv_j|\le C_h(1+|e_{H,j}|+|e_{u,j}|).
 \end{aligned}
\]
The coercivity estimate \eqref{eq:coercivity} and $d\sigma_{H,j}=\rho_jd\sigma_{g_1}$ imply
\begin{equation}\label{eq:localerrorbounds}
 \begin{aligned}
 \int_{\Sigma_j\cap\{\rho_j\ge\vartheta\}}
       (e_{H,j}^2+e_{u,j}^2)\,d\sigma_{g_1}&\le C_\vartheta\EE_j,\\
 \int_{\Sigma_j\cap\{\rho_j\ge\vartheta\}}
       (|e_{H,j}|+|e_{u,j}|)\,d\sigma_{g_1}&\le C_\vartheta\sqrt{\EE_j},
 \end{aligned}
\end{equation}
where the second line uses Cauchy--Schwarz and the area bound \eqref{eq:areabound}. By the definition of $\NN_j$ in \eqref{eq:e} and $d\sigma_{H,j}=\rho_jd\sigma_{g_1}$, the transverse curvature term satisfies
\begin{equation}\label{eq:transversecurvature}
 \begin{aligned}
 \eps_j^2\int_{\Sigma_j\cap\{\rho_j\ge\vartheta\}}
             \frac{\kappa_j^2}{\Phi_j^3}\,d\sigma_{g_1}
 &=\eps_j^2\int_{\Sigma_j\cap\{\rho_j\ge\vartheta\}}
       \frac1{\rho_j\Phi_j}\frac{\kappa_j^2}{\Phi_j^2}\,d\sigma_{H,j}\\
 &\le\vartheta^{-2}\eps_j^2\int_{\Sigma_j}
       \frac{\kappa_j^2}{\Phi_j^2}\,d\sigma_{H,j}
 =\vartheta^{-2}\NN_j\longrightarrow0.
 \end{aligned}
\end{equation}
Here $\Phi_j\ge\rho_j\ge\vartheta$, and $\vartheta$ is fixed. Combining \eqref{eq:transversecurvature} with \eqref{eq:localerrorbounds} and \eqref{eq:areabound} gives
\begin{equation}\label{eq:transverseerror}
 \int_{\Sigma_j\cap\{\rho_j\ge\vartheta\}}
       \frac{\eps_j^2}{\Phi_j^3}
       (1+\kappa_j^2+e_{u,j}^2)\,d\sigma_{g_1}
 \le C_\vartheta\bigl(\NN_j+\eps_j^2(1+\EE_j)\bigr).
\end{equation}
Equation~\eqref{eq:transverseerror} makes the transverse contributions, weighted by $\eps_j^2/\Phi_j^3$, tend to zero.

Here $N_jr_j=dr_j(N_j)=\langle\nabla_{g_1}r_j,N_j\rangle_{g_1}$ is the ambient normal derivative on $\Sigma_j$. Lemma~\ref{lem:killinghessian}, applied with $h=2+r_j$, adds the potential term $-(N_jr_j)wv$ to \eqref{eq:hessian}, giving
\[
 \begin{aligned}
 \mathcal B_j(w,v)=\int_{\Sigma_j}\biggl[&
 \frac{Z_jw\,Z_jv}{\Phi_j}
 +\frac{\eps_j^2}{\Phi_j^3}U_jw\,U_jv\\
 &-\biggl(\frac{\rho_j^2(H_j^2-4)+(e_{u,j}+1)^2}{\Phi_j}\\
 &\qquad+\frac{\eps_j^2}{\Phi_j^3}
       (\kappa_j^2+(e_{u,j}-1)^2)+N_jr_j\biggr)wv
 \biggr]d\sigma_{g_1}.
 \end{aligned}
\]
On the support of $k$,
\[
 0\le\rho_j^{-1}-\Phi_j^{-1}
 =\frac{\eps_j^2\sin^2u_j}{\rho_j\Phi_j(\rho_j+\Phi_j)}
 \le C_\vartheta\eps_j^2.
\]
Moreover, $H_j=2+e_{H,j}$ gives
\[
 \begin{aligned}
 \rho_j^2(H_j^2-4)+(e_{u,j}+1)^2-1
 &=\rho_j^2(4e_{H,j}+e_{H,j}^2)+2e_{u,j}+e_{u,j}^2,\\
 \int_{\Sigma_j\cap\{\rho_j\ge\vartheta\}}
 \left|\rho_j^2(H_j^2-4)+(e_{u,j}+1)^2-1\right|d\sigma_{g_1}
 &\le C_\vartheta(\sqrt{\EE_j}+\EE_j).
 \end{aligned}
\]
The potential contribution is bounded by
$C_{k,h}\|r_j\|_{C^1(\mathcal U)}\operatorname{Area}_{g_1}(\Sigma_j)$.
Substituting the preceding bounds and derivative estimates into the Hessian reduces the mixed pairing to
\[
 \mathcal B_j(w_j,v_j)
 =-\int_{\Sigma_j}\rho_j^{-1}\varphi_j(\mathbf a_j)
       [k'(u_j)Z_jv_j+k(u_j)v_j]\,d\sigma_{g_1}+o_j(1).
\]
For fixed $k,h$, uniformly in $j$, the error is bounded by a constant times
$\sqrt{\EE_j}+\EE_j+\NN_j+\eps_j^2(1+\EE_j)+\|r_j\|_{C^1(\mathcal U)}$.

The remaining characteristic derivative of $v_j$ can be transferred to $k'$ by integration by parts. Since
\[
 \Div_{\Sigma_j}(\rho_j^{-1}Z_j)
       =2\sin u_j\,\rho_j^{-2}e_{u,j},
\]
integration by parts, using $Z_ju_j=-1+e_{u,j}$ and the centre derivative bounds, gives
\[
 \begin{aligned}
 -\int_{\Sigma_j}\rho_j^{-1}\varphi_j(\mathbf a_j)k'(u_j)Z_jv_j\,d\sigma_{g_1}
 &=\int_{\Sigma_j}v_j\Div_{\Sigma_j}
       (\rho_j^{-1}\varphi_j(\mathbf a_j)k'(u_j)Z_j)\,d\sigma_{g_1}\\
 &=-\int_{\Sigma_j}\rho_j^{-1}\varphi_j(\mathbf a_j)k''(u_j)v_j\,d\sigma_{g_1}
       +O_{k,h}(\sqrt{\EE_j}).
 \end{aligned}
\]
Combining this with the remaining $k(u_j)v_j$ term proves \eqref{eq:mixedcontact}.
All fields in this integration vanish near the characteristic set.

For the self-pairing, the same estimates leave an angular integral that Lemma~\ref{lem:perimeterlimit} evaluates. They give
\[
 \begin{aligned}
 \mathcal B_j(w_j,w_j)
 &=\int_{\Sigma_j}\rho_j^{-1}\varphi_j(\mathbf a_j)^2
       (k'^2-k^2)(u_j)\,d\sigma_{g_1}+o_j(1)\\
 &=\int_{\Sigma_j}\varphi_j(\mathbf a_j)^2
       \rho_j^{-2}(k'^2-k^2)(u_j)\,d\sigma_{H,j}+o_j(1).
 \end{aligned}
\]
Since $\rho_j\ge\vartheta>0$ on the support of the integrands, we may substitute $d\sigma_{g_1}=\rho_j^{-1}d\sigma_{H,j}$. The angular average now evaluates this self-pairing. For $f=\rho^{-2}(k'^2-k^2)$, extended by zero at the angle endpoints, equation~\eqref{eq:angleaverage} has error at most
$C_k\sqrt{P_j\EE_j}$, since the $C^1(K_0)$ norms of $\varphi_j^2$ are uniformly bounded.
Its angular integral is $\int_{\angI}\rho^2f\,du=\int_{\angI}(k'^2-k^2)\,du$, proving \eqref{eq:localformlimit}.
\end{proof}

\begin{samepage}
\begin{lemma}[Cutoff estimates for the second variation]\label{lem:cutoffpairings}
Let $F_j$, $K_0$, $\varphi_j$ and $h$ be as in Lemma~\ref{lem:localpairings}, and suppose
$\int_{\Sigma_j}\varphi_j(\mathbf a_j)\,d\sigma_{H,j}=0$.
For coefficients $c_j\ge0$ with $\sup_jc_j<\infty$, assume that the form
\begin{equation}\label{eq:fullform}
 \mathcal Q_j(v,w)=\mathcal B_j(v,w)
 +c_j\left(\int_{\Sigma_j}v\,d\sigma_{g_1}\right)
    \left(\int_{\Sigma_j}w\,d\sigma_{g_1}\right)
\end{equation}
is nonnegative on all smooth normal speeds.
For $0<\eta<\pi/4$, choose $\chi_\eta\in C_c^\infty(\angI)$ with $0\le\chi_\eta\le1$, vanishing within distance $\eta$ of the endpoints, equal to one at distance at least $2\eta$, and satisfying
$|\chi_\eta'|\le C/\eta$ and $|\chi_\eta''|\le C/\eta^2$, with $C$ independent of $\eta$.
Put $k_\eta=\chi_\eta\rho$ and $w_{j,\eta}=k_\eta(u_j)\varphi_j(\mathbf a_j)$.
Then, for fixed $\eta$,
\begin{equation}\label{eq:nullcutoff}
 0\le\limsup_j\mathcal Q_j(w_{j,\eta},w_{j,\eta})
 \le C\eta.
\end{equation}
\begin{equation}\label{eq:mixedcutoff}
 \limsup_j|\mathcal B_j(w_{j,\eta},v_j)|\le C_h\sqrt\eta,
\end{equation}
where the constants are independent of $\eta$.
\end{lemma}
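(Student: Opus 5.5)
The plan is to split $\mathcal Q_j(w_{j,\eta},w_{j,\eta})$ into the Hessian part and the volume part, evaluate each with Lemma~\ref{lem:localpairings} and Lemma~\ref{lem:perimeterlimit}, and then deduce \eqref{eq:mixedcutoff} from \eqref{eq:nullcutoff} by Cauchy--Schwarz for the nonnegative form $\mathcal Q_j$. The lower bound $0\le\limsup_j$ in \eqref{eq:nullcutoff} is immediate from nonnegativity of $\mathcal Q_j$.

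\emph{Hessian part.} For fixed $\eta$, the function $k_\eta=\chi_\eta\rho$ lies in $C_c^\infty(\angI)$. Formula \eqref{eq:localformlimit} therefore gives
\[
 \mathcal B_j(w_{j,\eta},w_{j,\eta})
 =\frac2\pi\left(\int_{\Sigma_j}\varphi_j(\mathbf a_j)^2\,d\sigma_{H,j}\right)
 \int_{\angI}(k_\eta'^2-k_\eta^2)\,du+o_j(1).
\]
The prefactor is bounded uniformly in $j$ and $\eta$ by $\sup_j\|\varphi_j\|_{C^0(K_0)}^2\sup_jP_j$. For the one-dimensional integral, integrate by parts using $\rho''+\rho=0$:
\[
 \int(k_\eta'^2-k_\eta^2)\,du=\int\chi_\eta'^2\rho^2\,du .
\]
(Expand $(\chi\rho)'^2$ and note that $\int 2\chi\chi'\rho\rho'+\chi^2\rho'^2-\chi^2\rho^2=\int\chi^2(\rho'^2-\rho^2)-\int\chi^2(\rho\rho')'=0$.) The integrand is supported where the distance to the endpoints lies in $[\eta,2\eta]$. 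There $\rho\le2\eta$ and $|\chi_\eta'|\le C/\eta$, so the integral is at most $C\eta^{-2}\eta^2\cdot\eta=C\eta$.

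\emph{Volume part.} Write $d\sigma_{g_1}=\rho_j^{-1}d\sigma_{H,j}$ on the support, which is legitimate because $\rho_j\ge\rho(\eta)>0$ there. Then
\[
 \int_{\Sigma_j}w_{j,\eta}\,d\sigma_{g_1}=\int_{\Sigma_j}\varphi_j(\mathbf a_j)f(u_j)\,d\sigma_{H,j},
 \qquad f=\chi_\eta ,
\]
since $k_\eta/\rho=\chi_\eta$. By \eqref{eq:angleaverage} this equals $c_f\int_{\Sigma_j}\varphi_j(\mathbf a_j)\,d\sigma_{H,j}+O(\sqrt{P_j\EE_j})$. The first term vanishes by the zero-average hypothesis, and the error tends to zero as $\EE_j\to0$. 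Because $\sup_jc_j<\infty$, the volume term of $\mathcal Q_j$ is $o_j(1)$ for each fixed $\eta$. Combining the two parts proves \eqref{eq:nullcutoff}.

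\emph{Mixed pairing.} Since $\mathcal Q_j$ is nonnegative, Cauchy--Schwarz gives
\[
 |\mathcal Q_j(w_{j,\eta},v_j)|\le\mathcal Q_j(w_{j,\eta},w_{j,\eta})^{1/2}\,\mathcal Q_j(v_j,v_j)^{1/2}.
\]
Lemma~\ref{lem:contactbound} together with the uniform area bound \eqref{eq:areabound} gives $|\mathcal B_j(v_j,v_j)|\le C_h$. Since $|v_j|\le C_h$, we get $c_j(\int v_j)^2\le C_h$, hence $\mathcal Q_j(v_j,v_j)\le C_h$ uniformly in $j$. The volume cross term $c_j(\int w_{j,\eta})(\int v_j)$ is $o_j(1)$ by the volume-part computation above. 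Therefore
\[
 \limsup_j|\mathcal B_j(w_{j,\eta},v_j)|\le\sqrt{C\eta}\,\sqrt{C_h},
\]
which is \eqref{eq:mixedcutoff}.

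\emph{Main point requiring care.} The constants must be independent of $\eta$, while all error terms in Lemma~\ref{lem:localpairings} may depend on $\eta$. This is consistent with the convention that $\eta$ is fixed while $j\to\infty$: those $\eta$-dependent errors vanish in $\limsup_j$. What remains is only the explicit $C\eta$ bound on the one-dimensional integral, whose constant comes from $|\chi_\eta'|\le C/\eta$ and $\rho\le2\eta$ on the transition region, together with the uniform bounds on $\varphi_j$, $P_j$ and $c_j$.
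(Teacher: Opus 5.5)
Your proposal is correct and follows essentially the same argument as the paper: the zero-mode identity $\int_{\angI}(k_\eta'^2-k_\eta^2)\,du=\int_{\angI}\rho^2\chi_\eta'^2\,du\le C\eta$ combined with \eqref{eq:localformlimit}; Lemma~\ref{lem:perimeterlimit} with $f=\chi_\eta$ and the zero-average hypothesis to make the volume term vanish; and Cauchy--Schwarz for $\mathcal Q_j$ together with Lemma~\ref{lem:contactbound}. The only cosmetic difference is that you bound $\int_{\Sigma_j}v_j\,d\sigma_{g_1}$ using $|v_j|\le C_h$ and the area bound \eqref{eq:areabound}, whereas the paper writes it as $\int_{F_j}\operatorname{div}V_h\,d\mathcal L^3$ by the divergence theorem; either bound suffices.
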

\end{samepage}

\begin{proof}
For $k\in C_c^\infty(\angI)$, integration by parts and $\rho''+\rho=0$ give
\[
 \int_{\angI}(k'^2-k^2)\,du
 =\int_{\angI}\rho^2\left[\left(\frac{k}{\rho}\right)'\right]^2du.
\]
Lemma~\ref{lem:perimeterlimit} with $f=\chi_\eta$ gives
\[
 \left|\int_{\Sigma_j}w_{j,\eta}\,d\sigma_{g_1}\right|
 =\left|\int_{\Sigma_j}\chi_\eta(u_j)\varphi_j(\mathbf a_j)\,d\sigma_{H,j}\right|
 \le C\sqrt{P_j\EE_j}\longrightarrow0.
\]
Since the $c_j$ are bounded, the volume term in \eqref{eq:fullform} tends to zero. For $k_\eta=\chi_\eta\rho$, the zero-mode identity gives
\[
 \begin{aligned}
 \int_{\angI}(k_\eta'^2-k_\eta^2)\,du
 &=\int_{\angI}\rho^2|\chi_\eta'|^2\,du\\
 &\le C\eta^{-2}\int_{\{u\in\angI:\,\pi/2-|u|\le2\eta\}}
       (\pi/2-|u|)^2\,du\le C\eta.
 \end{aligned}
\]
Here $\rho(u)\le\pi/2-|u|$ near either endpoint. Also
$\int_{\Sigma_j}\varphi_j(\mathbf a_j)^2\,d\sigma_{H,j}\le C P_j\le C$.
Thus, for fixed $\eta$, equation~\eqref{eq:localformlimit} proves \eqref{eq:nullcutoff}.
The divergence theorem gives
\[
 \left|\int_{\Sigma_j}v_j\,d\sigma_{g_1}\right|
 =\left|\int_{F_j}\operatorname{div}V_h\,d\mathcal L^3\right|\le C_h.
\]
Together with Lemma~\ref{lem:contactbound} and boundedness of $c_j$, this gives $0\le\mathcal Q_j(v_j,v_j)\le C_h$, independently of $\eta$. Cauchy--Schwarz for $\mathcal Q_j$, and the preceding area-mean bounds, imply
\[
 \begin{aligned}[b]
 \limsup_j|\mathcal B_j(w_{j,\eta},v_j)|
 &=\limsup_j|\mathcal Q_j(w_{j,\eta},v_j)|\\
 &\le\limsup_j\sqrt{
       \mathcal Q_j(w_{j,\eta},w_{j,\eta})\,
       \mathcal Q_j(v_j,v_j)}\\
 &\le\sqrt{
       \bigl(\limsup_j\mathcal Q_j(w_{j,\eta},w_{j,\eta})\bigr)
       \bigl(\limsup_j\mathcal Q_j(v_j,v_j)\bigr)}
 \le C_h\sqrt\eta.
 \end{aligned}\qedhere
\]
\end{proof}

\begin{lemma}[Endpoint pairings]\label{lem:endpointpairings}
Let $F_j$, $K_0$, $\varphi_j$ and $h$ be as in Lemma~\ref{lem:localpairings},
and let $k_\eta$ and $w_{j,\eta}$ be the cutoff functions and normal speeds defined in Lemma~\ref{lem:cutoffpairings}.
Put $\mathbf a^\pm=(x_0,y_0,t_0\pm\pi/4)$. For fixed $0<\eta<\pi/4$,
\begin{equation}\label{eq:endpointapprox}
 \limsup_j\left|\mathcal B_j(w_{j,\eta},v_j)
       +\frac2\pi\int_{\Sigma_j}\varphi_j(\mathbf a_j)
           [h(\mathbf a_j^+)-h(\mathbf a_j^-)]\,d\sigma_{H,j}\right|
       \le C_h\eta,
\end{equation}
with $C_h$ independent of $\eta$.
\end{lemma}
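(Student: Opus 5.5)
The plan starts from the mixed-pairing formula \eqref{eq:mixedcontact} of Lemma~\ref{lem:localpairings}, applied with $k=k_\eta=\chi_\eta\rho$, which lies in $C_c^\infty(\angI)$ for fixed $\eta$. Since $\rho''+\rho=0$, we have
\[
 k_\eta''+k_\eta=\chi_\eta''\rho+2\chi_\eta'\rho'.
\]
This function is supported in the two endpoint strips $\{\eta\le\pi/2-|u|\le2\eta\}$. Hence
\[
 \mathcal B_j(w_{j,\eta},v_j)=-\int_{\Sigma_j}\rho_j^{-2}(\chi_\eta''\rho+2\chi_\eta'\rho')(u_j)\,\varphi_j(\mathbf a_j)v_j\,d\sigma_{H,j}+o_j(1).
\]
The next step is to replace $v_j=\langle V_h,N_j\rangle$ by its model value on the left-translated Pansu sphere through each point, using the incidence relation \eqref{eq:incidence}. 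On the strip near $u=\pm\pi/2$, the point $p$ lies at distance $O(|u\mp\pi/2|)=O(\eta)$ from $\mathbf a^\pm$, uniformly. The normal satisfies $N_j=\rho\nu+N_TT$ with $\rho=O(\eta)$ and $N_T=\pm1+O(\eta^2)$. Since $V_h$ is smooth with $T$-component $h$, this gives
\[
 v_j=\pm h(\mathbf a_j^\pm)+O_h(\eta)
\]
on the respective strips.

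Substituting this, the main term becomes
\[
 -\sum_\pm\pm\int_{\Sigma_j}\varphi_j(\mathbf a_j)h(\mathbf a_j^\pm)\,f_\pm(u_j)\,d\sigma_{H,j},
\]
where $f_\pm=\rho^{-2}(k_\eta''+k_\eta)$ restricted to the strip near $\pm\pi/2$. These $f_\pm$ are bounded continuous functions for fixed $\eta$, though their sup norm is of order $\eta^{-2}$. Lemma~\ref{lem:perimeterlimit} applies with the test function $\varphi_j\,h(\cdot^\pm)$, which has $C^1(K_0)$ norm uniformly bounded. It replaces $f_\pm(u_j)$ by the constant
\[
 c_{f_\pm}=\frac2\pi\int\rho^2f_\pm\,du=\frac2\pi\int_{\text{strip}_\pm}(k_\eta''+k_\eta)\,du,
\]
at cost $C_\eta\sqrt{P_j\EE_j}\to0$ for fixed $\eta$. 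Because $k_\eta$ equals $\rho$ off the strips, integration by parts evaluates these integrals as boundary values: on the strip near $+\pi/2$, $\int(k_\eta''+k_\eta)\,du=-\rho'(\pi/2-2\eta)+O(\eta)=1+O(\eta)$, and near $-\pi/2$ the value is $1+O(\eta)$ by symmetry. The two signs $\pm$ then combine to $-\frac2\pi\int\varphi_j(\mathbf a_j)[h(\mathbf a_j^+)-h(\mathbf a_j^-)]\,d\sigma_{H,j}$, up to $O(\eta)$ times $\int|\varphi_j|\,d\sigma_{H,j}\le CP_j$.

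The main obstacle is the error from replacing $v_j$ by its endpoint value. Its naive size is $O(\eta)$ times $\int|k_\eta''+k_\eta|\rho^{-2}\,d\sigma_H$ over the strip. The weight $|k_\eta''+k_\eta|\lesssim\eta^{-1}$ while $\rho^{-2}\sim\eta^{-2}$, so the weighted density is of order $\eta^{-3}$. One needs the $\sigma_H$-measure of the strip, weighted this way, to behave like $\frac2\pi\int\rho^2(\cdot)\,du$. I would handle this by applying Lemma~\ref{lem:perimeterlimit} again, with $f=|f_\pm|$ and constant test function. This shows that $\int|f_\pm(u_j)|\,d\sigma_H\to\frac{2}{\pi}P_j\int|k_\eta''+k_\eta|\,du=O(1)$ for fixed $\eta$. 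Combined with the pointwise $O_h(\eta)$ bound, this yields the $C_h\eta$ error, and it is independent of $\eta$ after $j\to\infty$.
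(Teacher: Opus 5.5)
Your proposal is correct and follows essentially the same route as the paper: insert $k_\eta$ into \eqref{eq:mixedcontact}, replace $v_j$ by $\pm h(\mathbf a_j^\pm)+O_h(\eta)$ on the two strips via \eqref{eq:incidence}, and apply Lemma~\ref{lem:perimeterlimit} twice. The first application is to the signed endpoint weights with test function $\varphi_j(\mathbf a)h(\mathbf a^\pm)$, and the second is to the absolute weight with constant test function, which controls the $O_h(\eta)$ replacement error. Two slips are harmless. The weights $f_\pm$ have sup norm of order $\eta^{-3}$, not $\eta^{-2}$, which does not matter for fixed $\eta$. The endpoint integrals are in fact $\cos 2\eta+O(\eta^2)=1+O(\eta^2)$, which is sharper than the $1+O(\eta)$ you need.
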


\begin{proof}
The endpoint speeds follow from \eqref{eq:incidence}: $p=\mathbf a_j^\pm+O(\eta)$ for $0<\pi/2\mp u_j<2\eta$, since the centres lie in a common compact set. Together with $N_j=\rho_j\nu_j+\sin u_j\,T$, this yields, uniformly for $|\nu_j|=1$,
\[
 v_j=\begin{cases}
 h(\mathbf a_j^+)+O_h(\eta),&\pi/2-2\eta<u_j<\pi/2,\\
 -h(\mathbf a_j^-)+O_h(\eta),&-\pi/2<u_j<-\pi/2+2\eta.
 \end{cases}
\]
The function $k_\eta''+k_\eta=\rho\chi_\eta''+2\rho'\chi_\eta'$ is supported in these two intervals and satisfies
\[
 \begin{gathered}
 \int_{\angI}|k_\eta''+k_\eta|\,du\le C,\\
 \int_{-\pi/2}^{-\pi/2+2\eta}(k_\eta''+k_\eta)\,du=1+O(\eta^2),\qquad
 \int_{\pi/2-2\eta}^{\pi/2}(k_\eta''+k_\eta)\,du=1+O(\eta^2).
 \end{gathered}
\]
On either transition interval, $\rho\le2\eta$, so
$|k_\eta''+k_\eta|\le C\eta^{-1}$; the two intervals have total length at most $4\eta$.
This proves the first bound. For the upper endpoint,
\[
 \int_{\pi/2-2\eta}^{\pi/2}(k_\eta''+k_\eta)\,du
 =[k_\eta']_{\pi/2-2\eta}^{\pi/2}
       +\int_{\pi/2-2\eta}^{\pi/2}k_\eta\,du
 =\cos(2\eta)+O(\eta^2)=1+O(\eta^2).
\]
The lower endpoint is identical, with $k_\eta'=\cos(2\eta)$ at its inner edge and zero near the endpoint.
To average these endpoint values, define the two angle weights by
\[
 f_\eta^\pm(u)=\begin{cases}
 \rho(u)^{-2}(k_\eta''+k_\eta)(u),&0<\pi/2\mp u<2\eta,\\
 0,&\text{otherwise}.
 \end{cases}
\]
They extend continuously by zero to $\bar{\angI}$: $k_\eta$ vanishes near the endpoints and agrees with $\rho$ at each inner edge, where $\rho''+\rho=0$.

The endpoint-speed error is controlled by the integral of the absolute angle weight. For fixed $\eta$, \eqref{eq:mixedcontact} uses the measure conversion below, and \eqref{eq:angleaverage} gives
\[
 \rho_j^{-1}d\sigma_{g_1}=\rho_j^{-2}d\sigma_{H,j},
 \qquad
 \|\rho^{-2}(k_\eta''+k_\eta)\|_\infty
 \le\frac{C}{\eta\sin^2\eta}\le C\eta^{-3},\qquad
 \|f_\eta^\pm\|_\infty\le C\eta^{-3},
\]
\[
 \begin{aligned}
 \int_{\Sigma_j}\rho_j^{-2}|(k_\eta''+k_\eta)(u_j)|\,d\sigma_{H,j}
 &\le\frac2\pi P_j\int_{\angI}|k_\eta''+k_\eta|\,du\\
 &\quad+C\|\rho^{-2}(k_\eta''+k_\eta)\|_\infty
       \sqrt{P_j\EE_j}\\
 &\le CP_j+C\eta^{-3}\sqrt{P_j\EE_j},\\
 \limsup_j\int_{\Sigma_j}\rho_j^{-2}
       |(k_\eta''+k_\eta)(u_j)|\,d\sigma_{H,j}
 &\le C\sup_jP_j\le C.
 \end{aligned}
\]
The angular average of each signed endpoint weight gives
\[
 \begin{aligned}
 &\left|\int_{\Sigma_j}f_\eta^\pm(u_j)
       \varphi_j(\mathbf a_j)h(\mathbf a_j^\pm)\,d\sigma_{H,j}
       -\frac2\pi
       \left(\int_{\angI}\rho^2f_\eta^\pm\,du\right)
       \int_{\Sigma_j}\varphi_j(\mathbf a_j)h(\mathbf a_j^\pm)\,d\sigma_{H,j}
       \right|\\
 &\quad\le C\|\varphi_j(\mathbf a)h(\mathbf a^\pm)\|_{C^1(K_0)}
       \|f_\eta^\pm\|_\infty\sqrt{P_j\EE_j}\\
 &\quad\le C_h\eta^{-3}\sqrt{P_j\EE_j}\longrightarrow0.
 \end{aligned}
\]
Here $\int_{\angI}\rho^2f_\eta^\pm\,du=1+O(\eta^2)$ by the endpoint integrals above. Combining the endpoint-speed error, the angular-average error and this $O(\eta^2)$ error in \eqref{eq:mixedcontact} gives
\[
 \begin{aligned}
 &\left|\mathcal B_j(w_{j,\eta},v_j)
       +\frac2\pi\int_{\Sigma_j}\varphi_j(\mathbf a_j)
           [h(\mathbf a_j^+)-h(\mathbf a_j^-)]\,d\sigma_{H,j}\right|\\
 &\quad\le C_h\eta\int_{\Sigma_j}\rho_j^{-2}
       |(k_\eta''+k_\eta)(u_j)|\,d\sigma_{H,j}
       +C_h\eta^{-3}\sqrt{P_j\EE_j}+C_h\eta^2P_j+o_{j,\eta}(1)\\
 &\quad\le C_h\eta P_j+C_h\eta^{-3}\sqrt{P_j\EE_j}+o_{j,\eta}(1).
 \end{aligned}
\]
The constants $C_h$ are independent of $\eta$, while $o_{j,\eta}(1)\to0$ as $j\to\infty$ for each fixed $\eta$. Since $\EE_j\to0$ and $\sup_jP_j<\infty$, taking the limsup gives \eqref{eq:endpointapprox}.
\end{proof}

\subsection{Concentration of centres and identification of the limit}

\Needspace{12\baselineskip}
\begin{lemma}[Concentration of reconstructed centres]\label{lem:centreconcentration}
Let $F_j$ and $K_0$ be as in Lemma~\ref{lem:localpairings}, and suppose $\inf_jP_j>0$.
Assume that the form $\mathcal Q_j$ in \eqref{eq:fullform} is nonnegative on all smooth normal speeds for coefficients $c_j\ge0$ with $\sup_jc_j<\infty$.
Define the horizontal-perimeter averages by
\[
 \bar{\mathbf a}_j=(\bar x_{0,j},\bar y_{0,j},\bar t_{0,j})
 =P_j^{-1}\int_{\Sigma_j}\mathbf a_j\,d\sigma_{H,j}.
\]
Then
\begin{equation}\label{eq:centrevariance}
 \int_{\Sigma_j}|\mathbf a_j-\bar{\mathbf a}_j|_{\mathbb R^3}^2\,d\sigma_{H,j}
 \longrightarrow0.
\end{equation}
\end{lemma}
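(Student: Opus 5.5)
The plan is to test the nonnegative full form $\mathcal Q_j$ with the cutoff speeds $w_{j,\eta}$ built from a centred coordinate of the reconstructed centre. The contact speed $v_j$ will be generated by a quadratic Hamiltonian. The two estimates already proved for the mixed pairing $\mathcal B_j(w_{j,\eta},v_j)$ then compete. Lemma~\ref{lem:cutoffpairings} bounds it by $C_h\sqrt\eta$. Lemma~\ref{lem:endpointpairings} identifies it, up to $C_h\eta$, with $-\frac2\pi\int_{\Sigma_j}\varphi_j(\mathbf a_j)[h(\mathbf a_j^+)-h(\mathbf a_j^-)]\,d\sigma_{H,j}$. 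For well-chosen $h$ this endpoint difference turns the integral into a coordinate variance, which is therefore forced to vanish.

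\emph{Choice of test functions.} Since $\inf_jP_j>0$ and the centres lie in $K_0$, the averages $\bar{\mathbf a}_j$ lie in the convex compact set $K_0$. Take $\varphi_j(\mathbf a)=x-\bar x_{0,j}$, and similarly $y-\bar y_{0,j}$ and $t-\bar t_{0,j}$. These are smooth on $\HH$ with $\sup_j\|\varphi_j\|_{C^1(K_0)}<\infty$, and by definition of the averages they have $\int_{\Sigma_j}\varphi_j(\mathbf a_j)\,d\sigma_{H,j}=0$ exactly. So the hypotheses of Lemmas~\ref{lem:cutoffpairings} and~\ref{lem:endpointpairings} hold.

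Let $K_1\supset K_0\cup(K_0\pm(0,0,\pi/4))$ be compact, and choose $h\in C_c^\infty(\HH)$ equal respectively to $xt$, $yt$ and $t^2$ on a neighbourhood of $K_1$. Then $\mathbf a_j^\pm\in K_1$, and the endpoint differences are
\[
 h(\mathbf a^+)-h(\mathbf a^-)=\tfrac\pi2x_0,\qquad\tfrac\pi2y_0,\qquad\pi t_0,
\]
respectively. Using the zero average of $\varphi_j$, we may replace $x_0$ by $x_0-\bar x_{0,j}$, and likewise for the other coordinates. The endpoint integral becomes
\[
 \int_{\Sigma_j}(x_{0,j}-\bar x_{0,j})^2\,d\sigma_{H,j},
\]
and similarly $\int(y_{0,j}-\bar y_{0,j})^2$ and $2\int(t_{0,j}-\bar t_{0,j})^2$.

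\emph{Combining the bounds.} For fixed $\eta\in(0,\pi/4)$, the triangle inequality with \eqref{eq:mixedcutoff} and \eqref{eq:endpointapprox} gives, for each coordinate,
\[
 \limsup_j\int_{\Sigma_j}(x_{0,j}-\bar x_{0,j})^2\,d\sigma_{H,j}\le C_h(\sqrt\eta+\eta),
\]
and the analogous bounds for $y_0$ and $t_0$. The constants are independent of $\eta$. The left side does not depend on $\eta$, so letting $\eta\downarrow0$ gives limit zero. Summing the three coordinates yields \eqref{eq:centrevariance}.

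\emph{Main obstacle.} The delicate point is the order of limits and the uniformity of constants. We must keep $\eta$ fixed while $j\to\infty$, since the endpoint-average errors blow up like $\eta^{-3}\sqrt{\EE_j}$. The $\eta$-independence of $C_h$ in both lemmas is then what allows $\eta\downarrow0$ at the end. One should also check that the fixed cutoff $h$ agrees with the quadratic polynomial at every possible endpoint point $\mathbf a_j^\pm$, which is why $K_1$ must contain the vertical $\pm\pi/4$ shifts of $K_0$.
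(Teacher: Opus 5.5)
Your proposal is correct and follows the paper's proof essentially step for step. It uses the same centred coordinates $\varphi_j$ and the same Hamiltonians $xt$, $yt$, $t^2$, with endpoint differences $\tfrac\pi2x_0$, $\tfrac\pi2y_0$, $\pi t_0$, and it combines Lemmas~\ref{lem:cutoffpairings} and~\ref{lem:endpointpairings} at fixed $\eta$ before letting $\eta\downarrow0$, exactly as the paper does.
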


\begin{proof}
Since $K_0$ is a Euclidean ball, $\bar{\mathbf a}_j\in K_0$.
To recover the three centre coordinates from the endpoint difference, choose $h_{xt},h_{yt},h_{t^2}\in C_c^\infty(\HH)$ agreeing respectively with $xt$, $yt$ and $t^2$ near all points $(x_0,y_0,t_0\pm\pi/4)$ for centres in $K_0$. Their endpoint differences are
\[
 \begin{aligned}
 h_{xt}(\mathbf a^+)-h_{xt}(\mathbf a^-)&=\tfrac\pi2x_0,\\
 h_{yt}(\mathbf a^+)-h_{yt}(\mathbf a^-)&=\tfrac\pi2y_0,\\
 h_{t^2}(\mathbf a^+)-h_{t^2}(\mathbf a^-)&=\pi t_0.
 \end{aligned}
\]
Pair each Hamiltonian with the corresponding coordinate minus its horizontal-perimeter average. Thus $(\varphi_j,h)$ is one of the three pairs
\[
 (x_0-\bar x_{0,j},h_{xt}),\qquad
 (y_0-\bar y_{0,j},h_{yt}),\qquad
 (t_0-\bar t_{0,j},h_{t^2}).
\]
In each case $\int_{\Sigma_j}\varphi_j(\mathbf a_j)\,d\sigma_{H,j}=0$, and the $C^1(K_0)$ norms of $\varphi_j$ and $\varphi_j^2$ are uniformly bounded. The endpoint differences and zero averages turn these pairings directly into variances. For $(\varphi_j,h)=(x_0-\bar x_{0,j},h_{xt})$,
\[
 \begin{aligned}
 \frac2\pi\int_{\Sigma_j}\varphi_j(\mathbf a_j)
       [h_{xt}(\mathbf a_j^+)-h_{xt}(\mathbf a_j^-)]\,d\sigma_{H,j}
 &=\int_{\Sigma_j}(x_0-\bar x_{0,j})x_0\,d\sigma_{H,j}\\
 &=\int_{\Sigma_j}(x_0-\bar x_{0,j})^2\,d\sigma_{H,j}
       +\bar x_{0,j}\int_{\Sigma_j}(x_0-\bar x_{0,j})\,d\sigma_{H,j}\\
 &=\int_{\Sigma_j}(x_0-\bar x_{0,j})^2\,d\sigma_{H,j}.
 \end{aligned}
\]
The $y_0$ pairing equals its variance in the same way, while the $t_0$ pairing equals twice its variance because $h_{t^2}(\mathbf a^+)-h_{t^2}(\mathbf a^-)=\pi t_0$.
For each fixed $\eta$, Lemmas~\ref{lem:cutoffpairings} and \ref{lem:endpointpairings}, together with these identities, give
\begin{equation}\label{eq:endpointcentre}
 \limsup_j\int_{\Sigma_j}|\mathbf a_j-\bar{\mathbf a}_j|_{\mathbb R^3}^2\,d\sigma_{H,j}
 \le C(\sqrt\eta+\eta),
\end{equation}
where $C$ is independent of $\eta$. Letting $\eta\downarrow0$ proves \eqref{eq:centrevariance}.
\end{proof}

\begin{proposition}[Rigidity of the limiting region]\label{lem:stablelimit}
Let $\eps_j\downarrow0$ and let $F_j$ be smooth regions in a common compact set $K\Subset\mathcal U$. Suppose $F_j\to F$ in $L^1$, $|F|>0$, $\sup_jP_H(F_j)<\infty$, and $\partial F_j$ is stationary for
\[
 P_{\eps_j}(G)-\int_G(2+r_j)\,d\mathcal L^3,
 \qquad \|r_j\|_{C^1(\mathcal U)}\longrightarrow0.
\]
Assume also that $\EE_j+\NN_{\eps_j}(F_j)\to0$. Write $\Sigma_j=\partial F_j$, $\NN_j=\NN_{\eps_j}(F_j)$ and $\mathcal B_j=\mathcal B_{2+r_j}$. Suppose that, for coefficients $c_j\ge0$ with $\sup_j c_j<\infty$, the form $\mathcal Q_j$ in \eqref{eq:fullform}
is nonnegative on all smooth normal speeds.
Then $F$ agrees almost everywhere with $\mathbf a_0\cdot\Omega$ for some $\mathbf a_0\in\HH$.
\end{proposition}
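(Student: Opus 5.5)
The plan is to combine the concentration of centres (Lemma~\ref{lem:centreconcentration}) with the incidence relation \eqref{eq:incidence}: every noncharacteristic point $p\in\Sigma_j$ lies on $\partial(\mathbf a_j(p)\cdot\Omega)$. First, since $F_j\to F$ with $|F|>0$ and the isoperimetric inequality bounds $P_j$ from below, we have $\inf_jP_j>0$, so Lemma~\ref{lem:centreconcentration} applies. The averages $\bar{\mathbf a}_j$ lie in the compact ball $K_0$; pass to a subsequence with $\bar{\mathbf a}_j\to\mathbf a_0$. Then \eqref{eq:centrevariance} gives $\int_{\Sigma_j}|\mathbf a_j-\mathbf a_0|^2\,d\sigma_{H,j}\to0$. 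Left translation by $\mathbf a_0^{-1}$ preserves all hypotheses, so we may take $\mathbf a_0=0$.

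Second, we show that the horizontal perimeter measures $\sigma_{H,j}$ concentrate near $\partial\Omega$. By \eqref{eq:incidence}, $p=\mathbf a_j(p)\cdot q$ with $q\in\partial\Omega$, and left translation is locally Lipschitz on compact sets. Hence $\operatorname{dist}(p,\partial\Omega)\le C|\mathbf a_j(p)|$, and Chebyshev gives $\sigma_{H,j}(\{p:\operatorname{dist}(p,\partial\Omega)>\tau\})\to0$ for every $\tau>0$. The total $g_1$-area is also uniformly bounded by \eqref{eq:areabound}.

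Third, we identify $F$ using horizontal divergence tests. For a horizontal test field $\zeta\in C_c^\infty$, we have $\int_{F_j}\Div_H\zeta\,d\mathcal L^3=\int_{\Sigma_j}\langle\zeta,\nu_j\rangle\,d\sigma_{H,j}$. We aim to show that the right side converges to $\int_{\partial\Omega}\langle\zeta,\nu_\Omega\rangle\,d\sigma_H$. The approach is to express $\nu_j$ through the centre and the angle: by \eqref{eq:incidence}, $\nu_j$ equals the horizontal normal of $\mathbf a_j\cdot\Omega$ at $p$, which is a continuous function of $(\mathbf a_j,u_j,\theta_j)$. Replacing $\mathbf a_j$ by $0$ costs $o(1)$ in $L^1(\sigma_{H,j})$ after cutting off a small neighbourhood of the poles, where $\rho$ is small and $\sigma_H$ carries little mass. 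The remaining integral is a function of $(u_j,\theta_j)$ alone on the model sphere. Its convergence to the model integral should follow from Lemma~\ref{lem:perimeterlimit}, which says the distribution of $u$ with respect to $\sigma_H$ approaches $\frac2\pi\rho^2\,du$, combined with an analogous equidistribution of $\theta$. That equidistribution holds because along characteristic curves $Z\theta=H\approx2$ while $Zu\approx-1$, so the same integration-by-parts trick with $\chi$ applies in the variables $(\theta,u)$. From this we get $D_H\mathbf1_F=D_H\mathbf1_\Omega$ as distributions. Then $\mathbf1_F-\mathbf1_\Omega$ has zero horizontal gradient and hence is constant (H\"ormander), and since both sets have finite volume, $F=\Omega$ almost everywhere.

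The main obstacle is the third step, namely the weak convergence of $\nu_j\,\sigma_{H,j}$ to the model measure. An alternative that avoids the $\theta$-equidistribution would be to show directly that $|D_H\mathbf1_F|$ is supported on $\partial\Omega$ (from the concentration in the second step together with lower semicontinuity), and then conclude that $F$ is a union of components of $\HH\setminus\partial\Omega$. Since $|F|>0$ is finite, this gives $F=\Omega$. I would try this route first, because it uses only the second step and the $L^1$ convergence. The delicate point in that route is justifying that $\operatorname{supp}|D_H\mathbf1_F|\subset\partial\Omega$, which would follow from $\sigma_{H,j}\rightharpoonup$ a measure supported on $\partial\Omega$ and $|D_H\mathbf1_F|\le\liminf\sigma_{H,j}$ on open sets.
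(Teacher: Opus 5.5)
Your second route, the one you say you would try first, is correct and is essentially the paper's argument. Your Steps~1--2 already settle the point you call delicate. For the open sets $A_\tau=\{\operatorname{dist}(\cdot,\partial\Omega)>\tau\}$, lower semicontinuity of the horizontal total variation under $L^1$ convergence, together with $|D_H\mathbf1_{F_j}|=\sigma_{H,j}$ for smooth $F_j$, gives $|D_H\mathbf1_F|(A_\tau)\le\liminf_j\sigma_{H,j}(A_\tau)=0$. No weak limit of $\sigma_{H,j}$ is needed.

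The paper says the same thing with test fields. For a horizontal $W$ supported off $\partial(\mathbf a_0\cdot\Omega)$, incidence forces $W(p)=0$ whenever $\mathbf a_j(p)$ is close to $\mathbf a_0$. So the flux is at most $\|W\|_\infty\,\sigma_{H,j}\{\mathbf a_j\notin U_0\}\to0$, and $L^1$ convergence gives $\int_F\operatorname{div}W\,d\mathcal L^3=0$. Constancy on the two components then follows from $[X,Y]=-2T$, which is the step you attribute to H\"ormander.

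You should drop the first route in your third step, because its key claim does not follow from the mechanism you give. Since $Z(\theta+2u)=e_H+2e_u$, integrating by parts along $Z$ only equidistributes $u$ (with law $\tfrac2\pi\rho^2\,du$) along each level set of $\hat\theta=\theta+2u$. Here $\hat\theta$ is the phase of $\nuphase$, which is constant along model characteristics. The argument says nothing about the distribution of $\hat\theta$ across characteristic curves, yet that distribution is exactly what determines the law of $\theta$ at fixed $u$. This is transverse information, and $\EE$ does not control it.

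That route also targets nothing weaker than the conclusion itself. Since $\int_{\Sigma_j}\langle\zeta,\nu_j\rangle\,d\sigma_{H,j}=\int_{F_j}\operatorname{div}\zeta\,d\mathcal L^3$ converges to $\int_F\operatorname{div}\zeta\,d\mathcal L^3$ automatically, the limit you want is precisely $D_H\mathbf1_F=D_H\mathbf1_\Omega$.
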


\begin{proof}
Write $P_j=P_H(F_j)$. Since $|F_j|\to|F|>0$, after discarding finitely many terms equation~\eqref{eq:profile} gives $P_j\ge I(|F|/2)>0$.
Choose a fixed compact Euclidean ball $K_0$ containing all reconstructed centres.
Lemma~\ref{lem:centreconcentration} gives \eqref{eq:centrevariance} for the averages defined there.
The averages lie in $K_0$, so a subsequence satisfies $\bar{\mathbf a}_j\to\mathbf a_0\in K_0$.

We show that the horizontal distributional derivatives of $\mathbf1_F$ vanish on $\HH\setminus\partial(\mathbf a_0\cdot\Omega)$ by testing horizontal vector fields supported there.
Let $W$ be a smooth compactly supported horizontal vector field whose support misses $\partial(\mathbf a_0\cdot\Omega)$. By continuity of left translation and compactness of $\partial\Omega$, there is a neighbourhood $U_0$ of $\mathbf a_0$ such that every sphere $\partial(\mathbf a\cdot\Omega)$ with $\mathbf a\in U_0$ misses $\supp W$. Equation~\eqref{eq:incidence} gives
\[
 \left|\int_{\Sigma_j}\langle W,N_j\rangle\,d\sigma_{g_1}\right|
 =\left|\int_{\Sigma_j}\langle W,\nu_j\rangle\,d\sigma_{H,j}\right|
 \le\|W\|_\infty\,
       \sigma_{H,j}\{\mathbf a_j\notin U_0\}\longrightarrow0.
\]
Indeed, choose $B_r(\mathbf a_0)\subset U_0$. For all sufficiently large $j$, $|\bar{\mathbf a}_j-\mathbf a_0|<r/2$, and
\[
 \begin{aligned}
 \sigma_{H,j}\{\mathbf a_j\notin U_0\}
 &\le\sigma_{H,j}\{|\mathbf a_j-\mathbf a_0|\ge r\}\\
 &\le\sigma_{H,j}\{|\mathbf a_j-\bar{\mathbf a}_j|\ge r/2\}\\
 &\le\frac4{r^2}\int_{\Sigma_j}
       |\mathbf a_j-\bar{\mathbf a}_j|^2\,d\sigma_{H,j}\longrightarrow0.
 \end{aligned}
\]
By the divergence theorem, the preceding flux is $\int_{F_j}\operatorname{div}W\,d\mathcal L^3$.
Moreover,
\[
 \left|\int_F\operatorname{div}W\,d\mathcal L^3
       -\int_{F_j}\operatorname{div}W\,d\mathcal L^3\right|
 \le\|\operatorname{div}W\|_\infty\,|F_j\triangle F|\longrightarrow0.
\]
Hence
$\int_F\operatorname{div}W\,d\mathcal L^3=0$.
Taking $W=\zeta X$ and $W=\zeta Y$ gives $X\mathbf1_F=Y\mathbf1_F=0$ as distributions off this sphere. The identity $[X,Y]=-2T$ then gives $T\mathbf1_F=0$ there. Since $\partial_x=X-yT$ and $\partial_y=Y+xT$, all coordinate derivatives vanish. The ball and its exterior are connected, so $\mathbf1_F$ is constant on each. Finite volume makes the exterior constant zero; positive volume makes the interior constant one. The sphere has Haar measure zero, and hence $F=\mathbf a_0\cdot\Omega$ almost everywhere.
\end{proof}

\begin{proof}[Proof of Theorem~\ref{thm:main}]
Proposition~\ref{prop:stableapprox} satisfies the hypotheses of Proposition~\ref{lem:stablelimit} by \eqref{eq:stableapprox} and \eqref{eq:fullstability}. Hence every normalized minimizer $E$ is a left translate of $\Omega$. Equation~\eqref{eq:unitball} gives $P=\pi^2$ and $V=3\pi^2/8$.

Existence at every volume and dilation invariance identify the minimizers as exactly the left translates and dilates of $\Omega$. By \eqref{eq:unitball}, for every $W>0$,
\begin{equation}\label{eq:sharp-profile}
 I(W)^4=\frac{512\pi^2}{27}W^3.
\end{equation}
For every measurable set $G$ with $0<|G|<\infty$ and $P_H(G)<\infty$, the definition of the profile gives $P_H(G)\ge I(|G|)$, and hence \eqref{eq:sharp}. Equality makes $G$ a minimizer, so the classification above identifies it, up to a Haar-null set, as a left translate and dilate of $\Omega$. Conversely, \eqref{eq:unitball} and the scaling laws give equality for every such translate and dilate.
\end{proof}

\Needspace{.72\textheight}
\linespread{1}\selectfont
\setlength{\parskip}{0pt}

\end{document}